\documentclass[11pt,a4paper]{article}
 
\usepackage[utf8]{inputenc}
\usepackage[english]{babel}
\usepackage[T1]{fontenc}
\usepackage{amsmath}
\usepackage[hidelinks]{hyperref}
\usepackage{amsfonts}
\usepackage{amssymb}
\usepackage{amsthm}
\usepackage{tikz-cd}
\usepackage{mathtools}
\usepackage{multicol}
\usepackage{authblk}
\usepackage{changepage}
\usepackage[left=2cm,right=2cm,top=3cm,bottom=3cm, footskip=1.5cm]{geometry}
\usepackage{setspace}
\usepackage[style=alphabetic, backend=biber, doi=false, isbn=false, url=false]{biblatex}
\usepackage{fancyhdr}
\pagestyle{fancy}
\fancyhf{}
\fancyhead[LE]{\nouppercase{\leftmark}}
\fancyhead[RO]{\nouppercase{\rightmark}}
\fancyfoot[C]{\thepage}

\AtEveryBibitem{
  \clearfield{note}
  \ifentrytype{book}{\clearfield{pages}}
}

\addbibresource{biblio.bib}
\author{Gustave Billon}
\title{Moduli Spaces of Marked Branched Projective Structures on Surfaces}

\DeclareMathOperator{\PGL}{PGL}

\DeclareMathOperator{\Ad}{Ad}

\DeclareMathOperator{\CP}{\mathbb CP}

\DeclareMathOperator{\Hom}{Hom}

\DeclareMathOperator{\Sym}{Sym}
\DeclareMathOperator{\Part}{Part}

\DeclareMathOperator{\red}{red}

\DeclareMathOperator{\pr}{pr}

\DeclareMathOperator{\rel}{rel}

\DeclareMathOperator{\Aut}{Aut}
\DeclareMathOperator{\C}{\mathbb C}
\DeclareMathOperator{\R}{\mathbb R}

\DeclareMathOperator{\N}{\mathbb N}
\DeclareMathOperator{\comma}{, }
\DeclareMathOperator{\Teich}{Teich}
\DeclareMathOperator{\Hol}{Hol}

\DeclareMathOperator{\PSL}{PSL}
\DeclareMathOperator{\Div}{div}
\DeclareMathOperator{\BC}{bc}
\DeclareMathOperator{\BPS}{bps}
\DeclareMathOperator{\Image}{Im}

\newtheorem{statement}{}[subsection]

\theoremstyle{definition}
\newtheorem{definition}[statement]{Definition}
\newtheorem*{definition*}{Definition}

\theoremstyle{theorem}
\newtheorem{theorem}[statement]{Theorem}
\newtheorem{proposition}[statement]{Proposition}
\newtheorem{corollary}[statement]{Corollary}
\newtheorem{lemma}[statement]{Lemma}
\newtheorem{theoremx}{Theorem}

\theoremstyle{remark}
\newtheorem{remark}[statement]{Remark}

\begin{document}

\maketitle

\begin{abstract}
We show that the moduli space $\mathcal P_g(n)$ of marked branched projective structures of genus $g$ and branching degree $n$ is a complex analytic space.
In the case $g \ge 2$ we show that $\mathcal P_g(n)$ is of dimension $6g-6+n$ and we characterize its singular points in terms of their monodromy.
We introduce a notion of branching class, that is an infinitesimal description of branched projective structures at the branched points.
We show that the space $\mathcal A_g(n)$ of marked branching classes of genus $g$ and branching degree $n$ is a complex manifold.
We show that if $n < 2g-2$ the space $\mathcal P_g(n)$ is an affine bundle over $\mathcal A_g(n)$, while if $n > 4g-4$ $\mathcal P_g(n)$ is an analytic subspace of $\mathcal A_g(n)$.
\end{abstract}

\section*{Introduction}
\addcontentsline{toc}{section}{Introduction}

\subsection*{Definition and Examples}

Holomorphic projective structures were introduced at the end of the nineteenth century in relation to linear differential equations of order $2$. They were used in particular by Poincaré, as an analogous for curves of genus at least $2$ of elliptic functions, in his proof of the uniformization theorem of Riemann surfaces. 
They were then thoroughly studied throughout the twentieth century, in particular by Gunning (see for instance \cite{gunning}).

In \cite{mandelbaum_1}, Mandelbaum introduced a notion of \emph{branched} projective structures, that can be seen as a generic generalization of the concept of projective structure.

Let us give a precise definition.
Fix an oriented surface $S$.

\begin{definition*}
A \emph{branched projective structure} on $S$ is the datum of
\begin{itemize}
\item An open cover $(U_i)_{i \in I}$ of $S$
\item For each $i \in I$, a locally ramified covering $f_i : U_i \to \CP^1$ such that on the intersections $U_i \cap U_j$, one has $f_i = g_{ij} \circ f_j$, where $g_{ij} \in \PGL(2, \C)$.
\end{itemize}
The number of ramifications of the maps $f_i$, counted with multiplicity, is called the \emph{branching degree} of the branched projective structure.
\end{definition*}

In the case where the branching degree is zero, one gets \emph{projective structures}, see \cite{dumas} and \cite{loray-marin} for overview papers.
Here are a few examples of branched projective structures :

\paragraph*{Tautological projective structure}
The Riemann sphere $\CP^1$ is endowed with a trivial projective structure, of branching degree zero, given by the indentity map.

\paragraph*{Uniformizing projective structure}
Suppose the surface $S$ is closed, and let $X$ be a Riemann surface with underlying differential surface $S$.
Let $\pi : \widetilde X \to X$ be the universal covering of $X$. 
By the uniformization theorem, $\widetilde X \subset \CP^1$ and $\Aut(\pi) \subset \PGL(2, \C)$.
As a consequence, the local inverses of the map $\pi$ are charts of a projective structure on $X$, without ramification point, called the \emph{uniformizing projective structure} of $X$.

\paragraph*{Pullback by a ramified covering}
Let $S'$ be another oriented surface endowed with a branched projective structure $(V_i, h_i)_{i \in I}$, and let $\phi : S \to S'$ be a ramified covering.
Then $S$ inherits a \emph{pullback} projective structure : the open covering is $\left(\phi^{-1}(V_i)\right)_{i \in I}$ and the family of charts is $(h_i \circ \phi)_{i \in I}$.
In particular, a ramified covering of the Riemann sphere can be seen as a branched projective structure.

\paragraph*{Curve not preserved by a foliation}
There is a notion of projective structure \emph{transverse to a foliation}, see \cite{scardua}.
Precisely, let $M$ be a complex manifold and $\mathcal F$ a codimension $1$ holomorphic foliation on $M$.
A projective structure transverse to $\mathcal F$ is an open cover $(\mathcal U_i)_{i \in I}$ of $M$ and holomorphic submersions $s_i : \mathcal U_i \to \CP^1$, constant on the leaves of $\mathcal F$, such that on the intersections $\mathcal U_i \cap \mathcal U_j$ one has $s_i = g_{ij} \circ s_j$ with $g_{ij} \in \PGL(2, \C)$.
Now if $X \subset M$ is a complex curve that is not invariant by $\mathcal F$, then $X$ inherits a branched projective structure, whose branching degree is the number of tangencies between $X$ and $\mathcal F$.

\paragraph*{Linear differential equations of order $2$}
Projective structures with branching degree zero can be obtained from equations of the form $a(x)y''(x) + b(x)y'(x) + c(x)y(x) = 0$ where $x$ is a local coordinate, $y$ is an unknown holomorphic function and $a, b, c$ are holomorphic functions, with $a$ nonvanishing.
Given a basis $(y_1, y_2)$ of local solutions, the quotient $\frac{y_1}{y_2}$ is a local biholomorphism from an open subset of $X$ to $\CP^1$.
If $(w_1, w_2)$ is another basis of local solutions, then there exists $\alpha, \beta, \gamma, \delta \in \C$ with $\alpha \delta - \beta \gamma \neq 0$ such that $w_1 = \alpha y_1 + \beta y_2$ and $w_2 = \gamma y_1 + \delta y_2$.
Thus there exists a Möbius transformation $g = \left(\begin{smallmatrix} \alpha & \beta \\ \gamma & \delta \end{smallmatrix}\right)$ such that $\frac{w_1}{w_2} = g \circ \left(\frac{y_1}{y_2}\right)$.
In other words, the equation defines a projective structure on $X$.

This description of projective structures gives rise to a notion of \emph{meromorphic projective structure}, corresponding to order $2$ differential equations with meromorphic coefficients.
This notion was studied in particular in \cite{allegretti-bridgeland}, \cite{gupta-mj}, \cite{gupta-mj_2} and \cite{serandour}.
Branched projective structures are very special meromorphic projective structures, that are usually excluded from the studies of general meromorphic projective structures.

Note that the notion of linear differential equations on Riemann surfaces is formalized in the notion of \emph{opers}, see \cite{beilinson-drinfeld}.
In this language, projective structures are $\PGL(2, \C)$-opers. See \cite{frenkel} for the role of opers in the Langlands program.
This thesis thus deals with \emph{branched} $\PGL(2, \C)$-opers. 
The notion of branched opers has been investigated in \cite{frenkel-gaitsgory} and \cite{biswas-dumitrescu-heller}.
See also \cite{biswas-dumitrescu-gupta} for the link between branched projective structures and logarithmic connections.

\paragraph*{Topological constructions}
Branched projective structures can also be modified by cut-and-paste techniques, some preserving the branching degree, such as \emph{grafting} or {moving branch points}, others changing the branching degree, such as \emph{bubbling}.
See for instance \cite{dumas}, \cite{gallo-kapovich-marden} or \cite{calsamiglia-deroin-francaviglia}.
These cut-and-paste techniques allow to construct branched projective structures of any degree.

\subsection*{Main Result}

Suppose the surface $S$ is closed, of genus $g$.
Fix a nonnegative integer $n \in \N$.
There is a notion of \emph{marked} branched projective structure, similar to the notion of marked complex structure on $S$ (see \ref{subsection_families_bps} for precise definitions).
We are interested in the following space :
\begin{equation}
\mathcal P_g(n) = \frac{\{\text{marked branched projective structures on $S$}\}}{\{\text{isomorphisms}\}}
\end{equation}
Given a marked branched projective structure $(U_i, f_i)_{i \in I}$ with $f_i = g_{ij} \circ f_j$, $g_{ij} \in \PSL(2, \C)$, the changes of charts $g_{ij}$ are holomorphic maps, thus the (ramified) atlas $(U_i, f_i)_{i \in I}$ defines a marked complex structure on $S$.
Denote by $\mathcal T_g$ the Teichmüller space for genus $g$, i.e. the space of isomorphism classes of marked Riemann surfaces of genus $g$.
One has a forgetful map
\begin{equation}
\Teich_g(n) : \mathcal P_g(n) \to \mathcal T_g(n)
\end{equation}

Moreover, the family $\left(g_{ij}\right)_{i, j \in I}$ is a constant cocycle with values in $\PSL(2, \C)$, and thus defines a flat $\CP^1$-bundle on $X$, to which is associated a holonomy representation $\rho \in \Hom\left(\pi_1(S), \PSL(2, \C)\right)$, defined up to conjugation with an element of $\PSL(2, \C)$.
Thus there is a holonomy map
\begin{equation}
\Hol_g(n) : \mathcal P_g(n) \to \Hom\left(\pi_1(S), \PSL(2, \C)\right) / \PSL(2, \C)
\end{equation}
The holonomy of branched projective structures has been extensively studied, see \cite{gallo-kapovich-marden}, \cite{calsamiglia-deroin-francaviglia}, \cite{le_fils}.

In the unbranched case, i.e. in the case $n=0$, if $g \ge 2$, it is well-known (\cite{gunning}, \cite{hejhal}, \cite{hubbard}) that the space $\mathcal P_g(0)$ is a smooth analytic space.
Moreover the fiber of $\Teich_g(0)$ of a marked complex curve $X \in \mathcal T_g$ is an affine space, directed by the space of global holomorphic quadratic differentials on $X$, $H^0\left(X, K_X^{\otimes 2}\right)$, and the holonomy map $\Hol_g(0)$ is a local biholomorphism.
This very nice structure of the moduli space of projective structures is one of the main reasons why they are extensively used.

The main result of this paper is an analog of these properties in the branched case (see theorems \ref{theorem_moduli_space_bps}, \ref{theorem_singular_points} and \ref{theorem_holonomy} for more precise statements).

\begin{theoremx}\label{theorem_introduction}
The space $\mathcal P_g(n)$ is an analytic space of dimension $6g-6+n$.
Morever, if $g \ge 2$, one has :
\begin{itemize}
\item[(i)] A point $p \in \mathcal P_g(n)$ is singular if and only if its holonomy representations $\Hol_g(n)(p)$ are abelian and fix a point in $\CP^1$.
\item[(ii)] The map $\Hol_g(n)$ is a holomorphic submersion over branched projective structures with nonelementary holonomy.
%\item[(ii)] Let $\Hom^*\left(\pi_1(S), \PSL(2, \C)\right) \subset \Hom\left(\pi_1(S), \PSL(2, \C)\right)$ be the dense open subset of nonabelian representations that do not admit a $2$-point orbit in $\CP^1$. Then $\Hol_g(n)$ is a submersion in restriction to the preimage of $\Hom^*\left(\pi_1(S), \PSL(2, \C)\right) / \PSL(2, \C)$.
\end{itemize}
\end{theoremx}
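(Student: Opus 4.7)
The plan is to set up a Kuranishi-type deformation theory for branched projective structures and read off the three assertions from hypercohomology computations. At a point $p \in \mathcal{P}_g(n)$ corresponding to a branched projective structure on a Riemann surface $X$ with branch divisor $D$ of degree $n$, first-order deformations preserving the branching profile should be described by the first hypercohomology $\mathbb{H}^1$ of a two-term complex $\mathcal{C}^\bullet$ on $X$ which simultaneously encodes deformations of the complex structure and of the projective charts subject to the constraint of prescribed ramification along $D$. Obstructions live in $\mathbb{H}^2(\mathcal{C}^\bullet)$, and a Kuranishi argument realizes $\mathcal{P}_g(n)$ near $p$ as an analytic subspace of a ball in $\mathbb{H}^1$ cut out by an analytic map valued in $\mathbb{H}^2$. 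A Riemann--Roch computation on $X$ yields Euler characteristic $6g-6+n$, so the local dimension is always $6g-6+n$.

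For (i), the obstruction space $\mathbb{H}^2(\mathcal{C}^\bullet)$ is Serre-dual to global sections of an auxiliary sheaf on $X$. A nonzero such section produces a horizontal sub-line-bundle of the flat rank-$2$ bundle with holonomy $\rho := \Hol_g(n)(p)$, which forces $\rho$ to fix a point in $\CP^1$; a sharper examination of the Serre-dual sheaf, not merely of its generic stalk, refines this to the statement that $\rho$ is also abelian. Conversely, given an abelian $\rho$ fixing a point of $\CP^1$, one writes down explicit elements of the obstruction space to witness that such $p$ are genuinely singular, and checks smoothness elsewhere by showing $\mathbb{H}^2(\mathcal{C}^\bullet)=0$ in all other cases.

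For (ii), I would compare $\mathcal{C}^\bullet$ with the group-cohomology complex controlling the character variety $\Hom(\pi_1(S), \PSL(2,\C))/\PSL(2,\C)$. At a nonelementary $\rho$, this character variety is a smooth manifold of dimension $6g-6$, and a diagram chase produces an exact sequence whose quotient is the tangent space to the character variety at $\rho$ and whose kernel has the expected dimension $n$; this is precisely the statement that $d\Hol_g(n)$ is surjective, hence a submersion on the open locus of nonelementary holonomies.

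The main difficulty is setting up the deformation complex correctly: the branching divisor itself varies with $p$ while its multiplicities are prescribed, so a naive tangent complex is ill-adapted. The notion of branching class introduced earlier in the paper is, I expect, precisely the device that linearizes this constraint and identifies deformations with $\mathbb{H}^1$ of a clean two-term complex; once this identification is in place, the Riemann--Roch, Serre duality, and comparison-with-character-variety arguments should go through along standard lines, and the three statements of the theorem will follow from the finer results \ref{theorem_moduli_space_bps}, \ref{theorem_singular_points}, and \ref{theorem_holonomy}.
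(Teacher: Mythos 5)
Your outline has the same skeleton as the paper's argument---tangent and obstruction spaces coming from sheaves of infinitesimal automorphisms, identification of the obstruction space with the (co)invariants of the adjoint representation to get the singularity criterion, and surjectivity of $d\Hol_g(n)$ onto $H^1(X, \Ad_p)$ because the quotient $\Ad_p/\Lambda_p$ is a skyscraper sheaf---and you correctly single out branching classes as the device that linearizes the constraint at the branch points. But two steps, as written, would fail. First, the dimension count: a Kuranishi model only gives $\dim \mathbb H^1 - \dim \mathbb H^2 \le \dim_p \mathcal P_g(n) \le \dim \mathbb H^1$, so the Euler characteristic alone does not pin the local dimension at $6g-6+n$. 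Worse, a complex encoding deformations ``preserving the branching profile'' computes the tangent space of the stratum $\mathcal P_g(\kappa)$ with fixed partition $\kappa$, whose $\mathbb H^1$ has dimension $6g-6+r+h^2\left(X, \Lambda_p\right)$ with $r$ the number of \emph{distinct} branch points, not $6g-6+n$; it does not see the deformations that split a multiple branch point. The paper avoids both problems with a global construction: $\mathcal P_g(n)$ is exhibited as the zero locus of an explicit section of a vector bundle of rank $h^1\left(X, K_X^{\otimes 2}(-D-D')\right)$ over a smooth affine bundle of \emph{restricted} branched projective structures, which bounds every component below by $6g-6+n$ and keeps all strata inside one smooth ambient space.

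Second, and more seriously: exhibiting nonzero elements of the obstruction space does not witness singularity---an obstructed point of a Kuranishi model can perfectly well be smooth. The paper's actual argument for the direction ``abelian holonomy fixing a point $\Rightarrow$ singular'' is that, when $D$ is reduced, the Zariski tangent space $H^1\left(X, \Lambda_p\right)$ has dimension $6g-6+n+h^2\left(X, \Lambda_p\right)$, strictly larger than the local dimension, and that the non-reduced case is reduced to this one by approximating $p$, with the same holonomy, by structures with reduced divisor (moving branch points, quoting \cite{calsamiglia-deroin-francaviglia}) and invoking closedness of the singular locus. Your proposal contains no substitute for either step, and in particular no mechanism for points with non-reduced branching divisor, exactly where the stratumwise deformation complex fails to compute the tangent space of $\mathcal P_g(n)$. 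A smaller point: the duality used is Poincar\'e duality for the local system $\Ad_p$ (giving $H^2\left(X, \Ad_p\right) \simeq H_0\left(X, \Ad_p\right) = \mathfrak g / L_0$) combined with $H^2\left(X, \Lambda_p\right) \simeq H^2\left(X, \Ad_p\right)$, rather than Serre duality on $X$; your reading of the answer in terms of an invariant section of the adjoint bundle is nevertheless the right one.
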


Recall that a representation $\rho \in \Hom\left(\pi_1(S), \PSL(2, \C)\right)$ is said to be \emph{elementary} if its image is an elementary subgroup of $\PSL(2, \C)$.
A subgroup of $\PSL(2, \C)$ is elementary if its action on the hyperbolic space $\mathbb H^3$ admits a finite orbit. 
Namely, elementary subgroups of $\PSL(2, \C)$ are the ones that fix a point in $\CP^1$, the ones that fix a pair of points in $\CP^1$ (conjugated to a subgroup of $\{z \mapsto  az | a \in \C^*\} \cup \{z \mapsto \frac a z | a \in \C^*\}$), and the ones that fix a point in the interior of $\mathbb H^3$ (spherical subgroups).
Note that the condition of having nonelementary holonomy in point (ii) is not optimal, see theorem \ref{theorem_holonomy}.

Theorem \ref{theorem_introduction} answers a question asked in the appendix of \cite{calsamiglia-deroin-francaviglia}, where it is shown that the fibers of $\Hol_g(n)$ over nonelementary representations are smooth analytic spaces.
This also extends results obtained by Mandelbaum in his papers introducing branched projective structures, see \cite{mandelbaum_1}, \cite{mandelbaum_2}, \cite{mandelbaum_3}.

The key point in the proof of theorem \ref{theorem_introduction} is what we call \emph{branching classes}.
Their existence is due to the fact that, unlike local biholomorphisms, local branched coverings are not all obtained from one another by postcomposition with a local biholomorphism.
See section \ref{section_branching_classes} for details.

\subsection*{Structure of the Paper}

In section \ref{section_branching_classes}, we introduce the notion of branching class over a curve with a divisor.
We investigate the reasons why branching classes are a useful tool for our purpose. 
These reasons are the very nice structure of the space of branching classes over a fixed curve with divisor, and the very nice structure of the space of branching classes with fixed branching class.

In section \ref{section_families_branched_projective_structures}, we define the moduli spaces of branched projective structures $\mathcal P_g(n)$, and we exhibit its analytic structure.
We consider in turn the analytic structures of spaces of marked curves (the Teichmüller spaces), the spaces of marked curves with divisors and the spaces of marked branching classes, before being able to endow $\mathcal P_g(n)$ with an analytic structure.
We also have to introduce a technical notion of \emph{restricted} branched projective structures, that we use as an intermediate between branching classes and branched projective structures.
We emphasize in this section the universal properties of the moduli spaces we consider, working with \emph{relative} branched projective structures, or equivalently families of such structures, in the spirit of \cite{hubbard}.

In section \ref{section_singular_points} we study $\mathcal P_g(n)$ when the genus $g$ is at least $2$.
Most of this section is dedicated to the characterization of the singular points of $\mathcal P_g(n)$, and follows the same structure as section \ref{section_families_branched_projective_structures}, considering in turn the moduli spaces of curves, divisors, branching classes, restricted branched projective structures and finally branched projective structures.
The last subsection is dedicated to the study the holonomy map.
The main tool for this section, in the spirit of \cite{hubbard}, is the Kodaira-Spencer machinery that allows to identify the tangent space of a moduli space at a point with the first cohomology group of infinitesimal automorphisms of this point.

\subsection*{Acknowledgements}

I wish to thank my advisor Sorin Dumitrescu for his help and support.
I also thank Bertrand Deroin and Frank Loray for fruitful conversations.

\subsection*{Notations}

If $X$ is a Riemann surface, $T_X$ denotes the tangent bundle of $X$ and $K_X$ denotes its cotangent bundle, which is also the canonical bundle of $X$.

If $E$ is a holomorphic vector bundle on a complex manifold, we denote also by $E$ its sheaf of holomorphic sections.

Let $z$ be a local coordinate on an open subset $U$ of a Riemann surface, and $\phi = \varphi(z)$ a nonconstant holomorphic function on $U$. The two following differential operators are studied in \cite{gunning} :
\begin{itemize}
\item[(i)] We denote by $[\phi, z]$ the \emph{affine distorsion} of $\phi$ in the coordinate $z$ : $[\phi, z] = \varphi''/\varphi'$
\item[(ii)] We denote by $\{\phi, z\}$ the \emph{schwarzian derivative} of $\phi$ in the coordinate $z$ : $[\phi, z] = \left(\varphi''/\varphi'\right)' - \frac 12 \left(\varphi''/\varphi'\right)^2$
\end{itemize}

\section{Branching Classes}\label{section_branching_classes}

\subsection{The Space of Branching Classes on a Curve with Divisor}

Let $X$ be a Riemann surface, $x \in X$ and $n \in \N$.
Denote by $(X, x)$ the germ of $X$ at $x$, and by $R_x^n$ the complex manifold of $2(n+1)$-jets of $(n+1)$-sheeted ramified coverings from $(X, x)$ to $\CP^1$ :

\begin{equation}\label{eq_space_jets_coverings}
R_x^n = \left\{j^{2(n+1)}_x \phi \left|
\begin{aligned}
&\text{$\phi$ germ at $x$ of holomorphic $(n+1)$-fold} \\
&\text{ramified covering with values in $\CP^1$ }
\end{aligned}
\right. \right\}
\end{equation}

The group of Möbius transformations $\PSL(2, \C)$ acts on $R_x^n$ by postcomposition.

\begin{proposition}
The group $PSL(2, \C)$ acts freely on $R_x^n$.
Moreover, the quotient $\PSL(2, \C) \backslash R_x^n$ is a complex manifold of dimension $n$, isomorphic to $\C^n$.
\end{proposition}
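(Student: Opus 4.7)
The plan is to produce an explicit global slice for the $\PSL(2, \C)$-action on $R_x^n$ by normalizing the coefficients of a coordinate representative of the jet. Fix a local coordinate $z$ on $X$ centered at $x$. Since $\PSL(2, \C)$ acts transitively on $\CP^1$, every orbit contains a jet with $\phi(x) = 0$; in an affine chart of $\CP^1$ around $0$, such a jet is represented by
\[
\phi(z) = a_{n+1} z^{n+1} + a_{n+2} z^{n+2} + \cdots + a_{2(n+1)} z^{2(n+1)}, \qquad a_{n+1} \neq 0.
\]
In particular $R_x^n$ has complex dimension $n+3$, and the stabilizer of $0$ in $\PSL(2, \C)$ is the two-dimensional subgroup of transformations $g(w) = aw/(cw+1)$ with $a \in \C^*$, $c \in \C$.

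For freeness, I would expand $g \circ \phi$ modulo $z^{2(n+1)+1}$ for such a $g$: the coefficient of $z^{n+1}$ is $a \, a_{n+1}$, which if the jet is to be preserved forces $a = 1$; the coefficient of $z^{2(n+1)}$ then becomes $a_{2(n+1)} - c \, a_{n+1}^2$, forcing $c = 0$. Conjugating by an element of $\PSL(2, \C)$ sending $\phi(x)$ to $0$, this extends to the whole of $R_x^n$.

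For the quotient, I would define the slice
\[
S = \bigl\{ \phi(z) = z^{n+1} + a_{n+2} z^{n+2} + \cdots + a_{2n+1} z^{2n+1} \bigr\} \subset R_x^n,
\]
a complex submanifold biholomorphic to $\C^n$ through $(a_{n+2}, \ldots, a_{2n+1})$. Every $\PSL(2, \C)$-orbit meets $S$: one applies in succession a translation of $\CP^1$ forcing $\phi(x) = 0$, a scaling to normalize $a_{n+1} = 1$, and the lower-triangular transformation with $c = a_{2(n+1)}$ to kill the remaining coefficient of $z^{2(n+1)}$. Uniqueness of the representative in $S$ follows from freeness.

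To finish, I would check that the action map $\Phi \colon \PSL(2, \C) \times S \to R_x^n$, $(g, s) \mapsto g \circ s$, is a biholomorphism. It is holomorphic and, by the two preceding paragraphs, bijective; both sides are $(n+3)$-dimensional. Its differential at a point $(e, s_0) \in \{e\} \times S$ is injective because the infinitesimal actions of the translation, scaling and lower-triangular one-parameter subgroups on $s_0$ contribute tangent vectors in $T_{s_0} R_x^n$ of leading orders $0$, $n+1$ and $2(n+1)$, transverse to $T_{s_0} S$ spanned by the orders $n+2, \ldots, 2n+1$. The main obstacle is organizational rather than conceptual: bookkeeping the three layers of coordinate choice (on $X$ near $x$, on $\CP^1$ near $\phi(x)$, and the $\PSL(2, \C)$-normalization) cleanly enough that the induced identification $\PSL(2, \C) \backslash R_x^n \simeq \C^n$ is manifestly holomorphic and independent of the intermediate choices.
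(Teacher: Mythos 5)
Your proposal is correct and follows essentially the same route as the paper: both arguments write the $2(n+1)$-jet in a coordinate as $a_0 + a_{n+1}z^{n+1}+\cdots+a_{2(n+1)}z^{2(n+1)}$ and use the three coefficients of degree $0$, $n+1$ and $2(n+1)$ to absorb the three parameters of $\PSL(2,\C)$ (the paper phrases this as those coefficients determining the $2$-jet of $g$ at $a_0$, you phrase it as an explicit normalized slice), leaving the ratios $a_{n+2}/a_{n+1},\dots,a_{2n+1}/a_{n+1}$ as the $\C^n$ of orbit invariants. Your extra care about the case $\phi(x)=\infty$ and about holomorphy of the slice identification is handled in the paper simply by choosing the affine chart $w$ so that neither $\phi(x)$ nor $g(a_0)$ is $\infty$.
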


\begin{proof}
Take $z$ a local coordinate on $X$ centered at $x$.
In an affine coordinate $w$ of $\CP^1 = \C \cup \{\infty\}$, an element $j_x^{2(n+1)} \phi \in R_x^n$ can be written :
\begin{equation}
j^{2(n+1)}_x \phi = a_0 + a_{n+1} z^{n+1} + a_{n+2}z^{n+2} + \cdots + a_{2(n+1)}z^{2(n+1)}
\end{equation}
with $a_{n+1} \neq 0$.
Recall that the datum of a Möbius transformation $g \in \PSL(2, \C)$ is equivalent to the datum of its $2$-jet at $a_0$.
If $g = \alpha_0 + \alpha_1 (w-a_0) + \alpha_2 (w-a_0)^2 + O\left((w-a_0)^3\right)$, the action of $g$ on $j^{2(n+1)}_x \phi$ writes
\begin{equation}
\begin{aligned}
&g \cdot j^{2(n+1)}_x \phi = j^{2(n+1)}_x g \circ \phi \\
&= \left(\alpha_0 + a_0\right) + \alpha_1 a_{n+1} z^{n+1} + \alpha_1 a_{n+2} z^{n+2} + \cdots + \alpha_1 a_{2n+1} z^{2n+1} + \left(\alpha_1 a_{2(n+1)} + \alpha_2 a_{n+1}^2\right) z^{2(n+1)}
\end{aligned}
\end{equation}
Note that the coordinate $w$ can be chosen so that neither $\phi(x)$ nor $g(a_0)$ is $\infty$ in the coordinate $w$.
Since $a_{n+1} \neq 0$, the coefficents of degree $0$, $n+1$ and $2(n+1)$ of $g \cdot j^{2(n+1)}_x \phi$ determine the $2$-jet of $g \in \PSL(2, \C)$ at $a_0$, thus $g$ itself, showing that $\PSL(2, \C)$ acts freely on $R_x^n$.
Moreover the orbit of $j^{2(n+1)}_x \phi$ under the action of $\PSL(2, \C)$ is given by the complex parameters $\left(a_{n+2}/a_{n+1}, a_{n+3}/a_{n+1}, \dots, a_{2n+1}/a_{n+1}\right)$, showing that the quotient $\PSL(2, \C) \backslash R_x^n$ is isomorphic to $\C^n$.
\end{proof}

Consider now $D = \sum_{i \in I} n_i x_i$ an effective divisor of degree $n$ on $X$. A \emph{branching class} on $X$ of divisor $D$ is the choice for each point $x_i$ of $D$ of an orbit for the action of $\PSL(2, \C)$ on $R_{x_i}^{n_i}$.
The space $A_X^D$ of branching classes over $(X, D)$ is thus given by
\begin{equation}
A_X^D := \prod_{i \in I} \PSL(2, \C) \backslash R_{x_i}^{n_i}
\end{equation}
If $D$ has finite degree, for instance if $X$ is compact, $A_X^D$ is a complex manifold of dimension $n = \sum_{i=1}^r n_i$.

\begin{proposition}\label{proposition_affine_structure_branching_classes}
The space $A_X^D$ of branching classes over a Riemann surface with divisor $(X, D)$ is an affine space directed by the vector space $H^0\left(X, K_X|_D\right)$ of jets of holomorphic differentials at the points of $D$.
\end{proposition}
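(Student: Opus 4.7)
Since $A_X^D = \prod_i \PSL(2, \C) \backslash R_{x_i}^{n_i}$ and $H^0(X, K_X|_D)$ decomposes as a direct sum of its local contributions at each $x_i$, the statement reduces to the case of a single point $x$ of multiplicity $n$ in $D$. Fix a local coordinate $z$ at $x$. The main tool will be the affine distortion $[\phi, z] = \varphi''/\varphi'$ from the notations section: for any representative $\phi = a_0 + a_{n+1} z^{n+1} + \dots + a_{2n+2} z^{2n+2}$ with $a_{n+1} \neq 0$, the function $[\phi, z]$ has a simple pole of fixed residue $n$ at $z = 0$, and the leading Taylor coefficients of its regular part encode precisely the $n$ moduli $a_{n+k}/a_{n+1}$, $2 \le k \le n+1$, which parametrize the $\PSL(2, \C)$-orbits.

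For two branching classes $A_1, A_2$ at $x$ with representatives $\phi_1, \phi_2$, I would define the difference $A_1 - A_2$ as the class of the $1$-form $\bigl([\phi_1, z] - [\phi_2, z]\bigr)\,dz$ modulo $z^n\, dz$, viewed as an element of the stalk of $K_X|_D$ at $x$. The simple poles cancel because both have the same residue $n$, so this is a genuine holomorphic jet of differentials. Well-definedness with respect to the choice of representative follows from the chain-rule identity $[g \circ \phi, z] = [g, w]\bigl(\phi(z)\bigr)\phi'(z) + [\phi, z]$, whose discrepancy vanishes to order $n$ at $z = 0$: indeed $\phi'$ does, and $[g, w]$ is holomorphic at $\phi(0)$ in an appropriate affine chart of $\CP^1$, exactly as in the previous proposition. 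Independence of the local coordinate follows from the transformation law $[\phi, w]\,dw = [\phi, z]\,dz + [z, w]\,dw$: the anomalous term $[z, w]\,dw$ depends only on the coordinate change and therefore cancels in the difference. The cocycle identity $(A_1 - A_2) + (A_2 - A_3) = (A_1 - A_3)$ is then immediate from the definition.

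It remains to show that for any base branching class $A_2$ the induced map $H^0(X, K_X|_D) \to A_X^D$, $\omega \mapsto A_2 + \omega$, is bijective. For surjectivity, given $\phi_2$ representing $A_2$ and a jet $\omega$ with local expression $f(z)\,dz$, $f(z) = b_0 + b_1 z + \cdots + b_{n-1}z^{n-1}$, I would set $\phi_1$ to be a primitive of $\phi_2'(z) \exp\!\bigl(\int_0^z f(t)\,dt\bigr)$ normalized by $\phi_1(0) = \phi_2(0)$. Since the exponential is holomorphic and equal to $1$ at $z = 0$, $\phi_1$ is again a ramified covering of order $n+1$, and a direct computation gives $[\phi_1, z] - [\phi_2, z] \equiv f(z) \pmod{z^n}$, so $A_1 - A_2 = [\omega]$. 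Injectivity then follows from surjectivity together with the matching complex dimension $n$ of source and target. The main subtlety lies in the second paragraph: the non-tensorial character of the affine distortion has to be reconciled with the fixed residue structure at the branch points in order to produce an intrinsic element of $H^0(X, K_X|_D)$.
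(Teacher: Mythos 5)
Your strategy is the same as the paper's: reduce to a single branch point, use the affine distorsion $[\phi,z]$, and exploit the fact that its simple pole of residue $n$ cancels in differences. Your well-definedness argument (the chain rule $[g\circ\phi,z]=[g,w](\phi)\,\phi'+[\phi,z]$ together with the order-$n$ vanishing of $\phi'$), the coordinate-independence via the anomaly $[z,w]\,dw$, the cocycle identity, and the surjectivity construction $\phi_1'=\phi_2'\exp\bigl(\int_0^z f\bigr)$ are all correct and correspond to the paper's points (i)--(iv).

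The step that does not hold up is injectivity. ``Surjective plus matching dimension implies injective'' is not a valid inference for a map of sets, nor for a holomorphic or polynomial map $\C^n\to\C^n$ (the map $z\mapsto z^2$ on $\C$ is surjective and not injective), and you cannot appeal to linearity of the difference map, since the affine structure is precisely what is being established. What is actually needed is freeness of the action, i.e.\ the converse of your well-definedness computation: if $\bigl([\phi_1,z]-[\phi_2,z]\bigr)dz\equiv 0 \pmod{z^n\,dz}$, then the $2(n+1)$-jets of $\phi_1$ and $\phi_2$ lie in the same $\PSL(2,\C)$-orbit. This is the paper's point (v), and it requires an argument: writing $[\phi_1,z]-[\phi_2,z]=\bigl(\log(\phi_1'/\phi_2')\bigr)'$, the hypothesis gives $\phi_1'=\lambda\phi_2'\bigl(1+O(z^{n+1})\bigr)$, hence $\phi_1'-\lambda\phi_2'=O(z^{2n+1})$ because $\phi_2'=O(z^n)$, hence $\phi_1=\lambda\phi_2+\mu+O(z^{2n+2})$, so the two jets differ by the Möbius map $w\mapsto\lambda w+\mu$. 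Alternatively you could substantiate your parenthetical claim that the regular part of $[\phi,z]$ ``encodes precisely'' the orbit coordinates $a_{n+1+k}/a_{n+1}$ --- the correspondence is triangular with nonzero diagonal entries, hence bijective --- but as written that claim is asserted rather than proved, and the dimension count you substitute for it does not close the argument.
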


\begin{proof}
Let $z$ be la local coordinate defined on an open subset $U \subset X$.
Let $x_0 \in U$ and let $\phi_1 = \varphi_1(z), \phi_2 = \varphi_2(z)$ be nonconstant holomorphic functions on $U$ such that $\phi_1, \phi_2$ are both ramified at $x_0$ with $(n+1)$ branches and have no other ramification points.
Recall that $[\phi_1, z]$ denotes the affine distorsion $\varphi_1''/\varphi_1'$.
A straightforward calculation shows the following (see also \cite{gunning}) :
\begin{itemize}
\item[(i)] The quantity $[\phi_1, z]dz$ is a meromorphic $1$-form on $U$ holomorphic away from $x_0$ and with a simple pole at $x_0$ with residue $n$.
In particular, the difference $[\phi_2, z]dz - [\phi_1, z]dz$ is a holomorphic $1$-form on $U$.
\item[(ii)] The holomorphic $1$-form $[\phi_2, z]dz - [\phi_1, z]dz$ does not depend on the choice of the holomorphic coordinate $z$ on $U$.
\item[(iii)] If $\omega$ is a holomorphic $1$-form on $U$, there exists a neigborhood $V$ of $x_0$ in $U$ and a nonconstant holomorphic function $\phi$ on $V$ whose only ramification point is $x_0$, where it has $(n+1)$ branches, such that $\omega = [\phi, z]dz - [\phi_1, z]dz$.
\item[(iv)] The $n$-jet at $x_0$ $j_{x_0}^n\left([\phi_2, z]dz - [\phi_1, z]dz\right)$ depends only on the $2n+1$-jet at $x_0$ $j_{x_0}^{2n+1}\phi_1$.
\item[(v)] One has $j_{x_0}^n\left([\phi_2, z]dz - [\phi_1, z]dz\right) = 0$ if and only if there exists a local biholomorphism $\gamma$ such that $j_{x_0}^{2n+1}\phi_1 = j_{x_0}^{2n+1}\gamma \circ \phi_1$.
\end{itemize}
These five properties imply that The space $A_U^{nx_0}$ is an affine space, directed by the vector space $H^0\left(X, K_U|_{nx_0}\right)$ of $n$-jets of holomorphic $1$-forms at $x_0$.
The difference between the class of $j_{x_0}^{2(n+1)} \phi_2$ and the class of $j_{x_0}^{2(n+1)} \phi_1$ being given by $j_{x_0}^n\left([\phi_2, z]dz - [\phi_1, z]dz\right) \in H^0\left(X, K_U|_{nx_0}\right)$.

Write $D = \sum_{i \in I} n_i x_i$, and take a family of open sets $(U_i)_{i \in I}$ such that $x_i \in U_i$ and if $i \neq j$, $U_i \cap U_j = \emptyset$.
Then $A_X^D = \prod_{i \in I} A_{U_i}^{n_ix_i}$ is an affine space directed by the vector space $\bigoplus_{i \in I}^r H^0\left(U_i, K_{U_i}|_{n_ix_i}\right) = H^0\left(X, K_X|_D\right)$.
\end{proof}

\begin{remark}
Let $D = \sum_{x \in X} n_x x$ be a divisor on a Riemann surface $X$, $a = (a_x)_{x \in X} \in \break \prod_{x \in X} \PSL(2, \C) \backslash R_{x}^{n_x} = A_X^D$ a branching class over $(X, D)$ and $U \subset X$ an open subset.
We denote by $D|_U = \sum_{x \in U} n_x x$ the divisor $D$ restricted to $U$ and by $a|_U = (a_x)_{x \in U} \in \prod_{x \in U} \PSL(2, \C) \backslash R_{x}^{n_x} = A_U^{D|_U}$ the branching class $a$ restricted to $U$.
\end{remark}

\subsection{Branched Projective Structures and Branching Classes}

Let us first recall the definition of a branched projective structure.

\begin{definition}
A \emph{branched projective atlas} on a Riemann surface $X$ is given by an open cover $(U_i)_{i \in I}$ of $X$, and for each $i \in I$ a nonconstant holomorphic map $f_i : U_i \to \CP^1$ such that for any $i, j \in I$ with $U_i \cap U_j \neq 0$ there is a Möbius transformation $g_{ij} \in \PSL(2, \C)$ such that $f_i|_{U_i \cap U_j} = g_{ij} \circ f_j|_{U_i \cap U_j}$.

Two branched projective atlases are said to be \emph{equivalent} if their union is also a branched projective atlas.

A \emph{branched projective structure} on $X$ is an equivalence class of branched projective atlases.
\end{definition}

Let $X$ be a Riemann surface, let $p$ be a branched projective structure on $X$ and $(U_i, f_i)_{i \in I}$ an atlas for $p$.
On the intersections $U_i \cap U_j$, $f_i$ is obtained from $f_j$ by postcomposition with a Möbius transformation, so the vanishing orders of the differentials $df_i$ and $df_j$ are the same at any point of $U_i \cap U_j$.
Moreover at any point $x \in U_i \cap U_j$, if $n$ is the (possibly zero) vanishing order of $df_i$ (and thus of $df_j$) at $x$, then $f_i$ and $f_j$ both determine elements of $R_x^n$ in \eqref{eq_space_jets_coverings} that are in the same orbit under the action of $\PSL(2, \C)$.

The branched projective structure $p$ thus determines two additional objects on $X$ :
\begin{itemize}
\item[(i)] An effective divisor $D$, called the \emph{branching divisor} of $p$, given by the vanishing locus of the differentials of the charts of $p$ : $D = \sum_{x \in X} n_x x$ where $n_x$ is the vanishing order at $x$ of the charts of $p$ defined at $x$.
\item[(ii)] A \emph{branching class} $a \in A_X^D$ on $X$ of divisor $D$, given by the orbits of the jets of charts of $p$ at the points of $D$.
\end{itemize}

For a branching class $a$ over $(X, D)$, let us denote by $P^a_X$ the set of branched projective structures on $X$ with branching class $a$ (thus with branching divisor $D$).

\begin{proposition}\label{proposition_space_bps_given_bc}
Let $D$ be an effective divisor on a Riemann surface $X$. Let $a \in A_X^D$ be a branching class over $(X, D)$.
\begin{itemize}
\item[(i)] There exists an obstruction $c(a) \in H^1\left(X, K_X^{\otimes 2}(-D)\right)$ such that $c(a)=0$ if and only if $P_X^a \neq \emptyset$.
\item[(ii)] The set $P_X^a$, if nonempty, is an affine space directed by the vector space $H^0\left(X, K_X^{\otimes 2}(-D)\right)$ of holomorphic quadratic differentials on $X$ vanishing over the divisor $D$.
\end{itemize}
\end{proposition}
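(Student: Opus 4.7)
The plan is to realize $P_X^a$ as the set of global sections of an auxiliary sheaf on $X$. Let $\mathcal P^a$ denote the sheaf whose sections over an open set $U \subset X$ are the branched projective structures on $U$ with branching class $a|_U$. The goal is to show that $\mathcal P^a$ is a torsor under the sheaf $K_X^{\otimes 2}(-D)$. Granted this, both parts of the proposition follow from the standard cohomological dictionary: choosing local sections $p_i \in \mathcal P^a(U_i)$ on a fine enough cover $(U_i)_{i \in I}$ and taking differences $p_i - p_j \in H^0(U_i \cap U_j, K_X^{\otimes 2}(-D))$ produces a \v{C}ech $1$-cocycle whose class $c(a) \in H^1(X, K_X^{\otimes 2}(-D))$ vanishes if and only if the local sections can be adjusted to glue into a global section, that is, if and only if $P_X^a \neq \emptyset$; and when this is the case, acting on any chosen base point by $H^0(X, K_X^{\otimes 2}(-D))$ yields the affine structure of (ii).

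To prove the torsor claim it suffices to establish two local statements. For local existence: away from the support of $D$, any local biholomorphism to $\CP^1$ defines a local section of $\mathcal P^a$; near a branch point $x_i$, any germ of ramified covering whose $2(n_i+1)$-jet represents the orbit $a|_{x_i} \in \PSL(2, \C) \backslash R_{x_i}^{n_i}$ does, and such a germ exists by definition of the orbit. For the local affine structure: if $p_1, p_2 \in \mathcal P^a(U)$ are represented by charts $f_1, f_2$ in a common local coordinate $z$, set $\sigma := (\{f_1, z\} - \{f_2, z\})\, dz^2$. This defines a meromorphic quadratic differential on $U$, since the coordinate-change term $\{w, z\}\, dz^2$ entering each Schwarzian independently cancels in the difference, and the $\PSL(2, \C)$-invariance of the Schwarzian makes the expression independent of the M\"obius-equivalent choices of charts. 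The core claim is that $\sigma$ lies in $H^0(U, K_U^{\otimes 2}(-D|_U))$: this is obvious away from the support of $D$, and at a branch point $x_i$ it follows from the fact that the Laurent coefficients of $\{f, z\}$ of degrees $-2, -1, \dots, n_i - 1$ depend only on $j^{2(n_i+1)}_{x_i} f$ and hence, by $\PSL(2, \C)$-invariance, only on the class $a|_{x_i}$; sharing this class forces these coefficients of $\{f_1, z\}$ and $\{f_2, z\}$ to agree, so $\sigma$ vanishes to order $n_i$ at $x_i$. The converse, that every $q \in H^0(U, K_U^{\otimes 2}(-D|_U))$ arises as a difference, follows by solving the third-order Schwarzian ODE $\{f_2, z\} = \{f_1, z\} + q$, whose right-hand side has the correct principal part at each $x_i$ and hence admits a solution $f_2$ with the prescribed branching behaviour.

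The main obstacle is the power series computation establishing which Laurent coefficients of $\{f, z\}$ are determined by $j^{2(n_i+1)}_{x_i} f$. Writing $f = a_0 + a_{n_i+1} z^{n_i+1} + a_{n_i+2} z^{n_i+2} + \dots$ in a local coordinate centered at $x_i$, one checks directly from $\{f, z\} = (f''/f')' - \frac{1}{2}(f''/f')^2$ that the coefficient of $z^k$ in $f''/f'$ depends only on $a_{n_i+1}, \dots, a_{n_i+k+2}$, and therefore that the coefficient of $z^k$ in $\{f, z\}$ depends only on $a_{n_i+1}, \dots, a_{n_i+k+3}$. This last list lies inside $j^{2(n_i+1)}_{x_i} f$ precisely when $k \leq n_i - 1$, yielding exactly the $n_i + 2$ class-determined Laurent coefficients needed to force $\sigma$ to vanish to order $n_i$ at $x_i$.
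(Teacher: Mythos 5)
Your proposal is correct and follows essentially the same route as the paper: you realize $P_X^a$ as the global sections of a locally nonempty sheaf of affine spaces under $K_X^{\otimes 2}(-D)$ (the paper's $\mathfrak P_X^a$ together with Lemma \ref{lemma_obstruction_affine_sheaf}), establish the local affine structure via the Schwarzian derivative and the jet-counting argument showing that the Laurent coefficients of $\{f,z\}$ up to order $n_i-1$ are determined by the branching class, and invoke the local solvability of the Schwarzian equation (Fuchs's local theory in the paper) for surjectivity of the difference map. The only minor difference is that you spell out the coefficient bookkeeping slightly more explicitly than the paper's ``straightforward calculation.''
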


Let us state point (i) in a slighty more general framework.
Let $\mathcal S$ be a sheaf of abelian groups on $X$.
By a \emph{sheaf of affine spaces directed by $\mathcal S$} we mean a sheaf $\mathfrak S$ such that if $U \subset X$ is an open subset, the space of sections $\Gamma(U, \mathfrak S)$ is either empty or an affine space directed by $\Gamma(U, \mathcal S)$.
We say moreover that $\mathfrak S$ is \emph{locally nonempty} if for any $x \in X$ there exists an open neighborhood $U$ of $X$ such that $\Gamma(U, \mathfrak S) \neq 0$.

\begin{lemma}\label{lemma_obstruction_affine_sheaf}
Let $\mathcal S$ be a sheaf of abelian groups over the Riemann surface $X$ and $\mathfrak S$ a locally nonempty sheaf of affine spaces directed by $\mathfrak S$. 
There exists an obstruction $c(\mathfrak S) \in H^1(X, \mathcal S)$ such that $c(\mathfrak S)=0$ if and only if $H^0(X, \mathfrak S) \neq 0$.
\end{lemma}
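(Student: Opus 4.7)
The plan is to construct $c(\mathfrak S)$ as a Čech cohomology class measuring the failure of local sections to glue, following the standard torsor obstruction recipe.

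First I would choose an open cover $\mathcal U = (U_i)_{i \in I}$ of $X$, fine enough that $\Gamma(U_i, \mathfrak S) \neq \emptyset$ for every $i$, which is possible by the local nonemptiness hypothesis (and by taking a refinement to the intersection of finitely many such opens, or by passing to the direct limit over covers in the end). For each $i$ pick a local section $s_i \in \Gamma(U_i, \mathfrak S)$. On each intersection $U_i \cap U_j$, both $s_i$ and $s_j$ are sections of the affine-space sheaf $\mathfrak S$, so their difference
\begin{equation}
c_{ij} := s_i|_{U_i \cap U_j} - s_j|_{U_i \cap U_j} \in \Gamma(U_i \cap U_j, \mathcal S)
\end{equation}
is well defined. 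On triple intersections the cocycle identity $c_{ij} + c_{jk} = c_{ik}$ holds by direct substitution, so $(c_{ij})$ defines a Čech $1$-cocycle with values in $\mathcal S$, hence a class $c(\mathfrak S) \in H^1(X, \mathcal S)$.

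Next I would check that this class does not depend on the choice of local sections: replacing $s_i$ by $s_i + b_i$ with $b_i \in \Gamma(U_i, \mathcal S)$ changes $c_{ij}$ by $b_i - b_j$, which is a Čech coboundary. One should also verify independence of the cover by checking that refinements give the same class, which is routine.

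Finally I would prove the equivalence. If $H^0(X, \mathfrak S) \neq \emptyset$, picking a global section $s$ and restricting to each $U_i$ gives consistent choices $s_i = s|_{U_i}$ with $c_{ij} = 0$, so $c(\mathfrak S) = 0$. Conversely, if $c(\mathfrak S) = 0$, then after refining the cover we may write $c_{ij} = b_i - b_j$ for some $b_i \in \Gamma(U_i, \mathcal S)$; then the sections $s_i - b_i \in \Gamma(U_i, \mathfrak S)$ satisfy $(s_i - b_i) - (s_j - b_j) = c_{ij} - (b_i - b_j) = 0$ on $U_i \cap U_j$, and hence glue to a global section of $\mathfrak S$. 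The main subtlety, rather than an obstacle, is purely bookkeeping: being careful that the affine-space difference operation is compatible with restriction and that the cover-independence argument works in the direct limit defining $\check H^1(X, \mathcal S)$; no deep input is needed beyond the sheaf axioms for $\mathfrak S$ and $\mathcal S$.
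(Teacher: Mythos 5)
Your proposal is correct and follows essentially the same argument as the paper: take local sections furnished by local nonemptiness, form the Čech $1$-cocycle of their differences, check independence of choices, and in the converse direction subtract a $0$-cochain trivializing the cocycle so the corrected sections glue. The only differences are an immaterial sign convention in $c_{ij}$ and your additional (welcome but routine) remark about independence of the cover.
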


\begin{proof}
Since $\mathfrak S$ is locally nonempty, there exists an open cover $(U_i)_{i \in I}$ of $X$ such that for each $i \in I$ there is a local section $s_i \in \Gamma\left(U_i, \mathfrak S\right)$.
For any $i, j \in I$, write $c_{ij} = s_j-s_i \in \Gamma\left(U_{ij}, \mathcal S\right)$ where $U_{ij} = U_i \cap U_j$.
The family $\left(c_{ij}\right)_{i, j \in I}$ is clearly a $1$-cocycle for $\mathcal S$ and thus defines a cohomology class $c(\mathfrak S) \in H^1(X, \mathcal S)$.

The class $c(\mathfrak S)$ does not depend on the choice of the family $\left(s_i\right)_{i \in I}$.
Indeed, if $\left(s_i'\right)_{i \in I}$ is another such family and $c_{ij}' = s_j'-s_i'$ the corresponding cocycle, then one has $c_{ij}' - c_{ij} = \left(s_j'-s_j\right) - \left(s_i'-s_i\right)$, so that the cohomology class of the cocycle $\left(c_{ij}' - c_{ij}\right)_{i, j \in I}$ is $0$.
In particular, if $H^0(X, \mathfrak S) \neq 0$ and $s$ is a global section of $\mathfrak S$, then one can take $s_i = s|_{U_i}$ in which case $c_{ij} = 0$ thus $c(\mathfrak S) = 0$.

Reciprocally, if $c(\mathfrak S)=0$, then up to refining the open cover $(U_i)_{i \in I}$, there is a family $\left(t_i\right)_{i \in I}$ such that $t_i \in \Gamma\left(U_i, \mathcal S\right)$ and $c_{ij} = t_j-t_i$.
In that case, write $\widetilde s_i = s_i - t_i$.
The $\widetilde s_i$ coincide on the intersections $U_{ij}$ and thus glue together to provide a global section of $\mathfrak S$.
\end{proof}

\begin{proof}(of proposition \ref{proposition_space_bps_given_bc})

Let us first focus on the affine structure of $P_X^a$ in case it is nonempty.
Let $z$ be la local coordinate defined on an open subset $U \subset X$.
Let $x_0 \in U$ and let $a_0 \in A_X^{nx_0}$ be a branching class of order $n$ at $x_0$. 
Let also $\phi_1 = \varphi_1(z), \phi_2 = \varphi_2(z)$ be nonconstant holomorphic functions on $U$ such that $\phi_1, \phi_2$ are both ramified at $x_0$ with $(n+1)$ branches and branching class $a_0$.
Suppose $\phi_1$ and $\phi_2$ have no other ramification point.
Recall that $\left\{\phi_1, z\right\}$ denotes the Schwarzian derivative $\left(\varphi_1''/\varphi_1'\right)' - \frac 1 2 \left(\varphi_1''/\varphi_1'\right)^2$.
It is well-known (see for instance \cite{gunning}) that the difference $\left\{\phi_2, z\right\} - \left\{\phi_1, z\right\}$ does not depend on the choice of coordinate $z$. 
Moreover, this difference vanishes uniformly on $U$ if and only if there is a Möbius transformation $g \in \PSL(2, \C)$ such that $\phi_2 = g \circ \phi_1$.
A straightforward calculation shows the following :
\begin{itemize}
\item[(i)] The quantity $\left\{\phi_1, z\right\}dz^{\otimes 2}$ is a meromorphic quadratic differential on $U$ that is holomorphic away from $x_0$ and admits a pole of order $2$ at $x_0$.
\item[(ii)] The coefficient of order $-2$ at $x_0$ of $\left\{\phi_1, z\right\}dz^{\otimes 2}$ is $\frac{1 - (n+1)^2}2$.
\item[(ii)] The coefficients of order $-1, 0, 1, \dots n-1$ at $x_0$ of $\left\{\phi_1, z\right\}dz^{\otimes 2}$ determine and are determined by the branching class of $\phi_1$ at $x_0$. In particular, the difference $\left\{\phi_2, z\right\}dz^{\otimes 2} - \left\{\phi_1, z\right\}dz^{\otimes 2}$ is a holomorphic quadratic differential on $U$ with a zero of order $n$ at $x_0$.
\end{itemize}
Moreover, it is a consequence of Fuchs's local theory (see for instance \cite{mandelbaum_1} or \cite{saint-gervais_en}, section IX.1) that if $\omega$ is a holomorphic quadratic differential on $U$ with a zero of order $n$ at $x_0$, then on an open neighborhood $V$ of $x_0$ in $U$, there exists a function $\phi_3$ such that $\omega = \left\{\phi_3, z\right\}dz^{\otimes 2} - \left\{\phi_1, z\right\}dz^{\otimes 2}$.
The function $\phi_3$ is ramified at $x_0$, with $(n+1)$ branches and branching class $a_0$.

Now suppose $P^a \neq \emptyset$ and let $p_1, p_2 \in P_X^a$.
Let respectively $\left(U_i, \phi_i\right)$ and $\left(U_i, \psi_i\right)$ be atlases for $p_1$ and $p_2$ and for each $i \in I$ let $z_i$ be a coordinate on $U_i$.
The above facts about schwarzian derivatives imply that the local holomorphic quadratic differentials $\left \{\psi_i, z_i\right\}dz_i^{\otimes 2} - \left \{\phi_i, z_i\right\}dz_i^{\otimes 2}$ coincide on intersections $U_i \cap U_j$ and the global quadratic differential $\omega$ obtained by gluing them is a section of $H^0\left(X, K_X^{\otimes 2}(-D)\right)$.
We write $p_2-p_1 := \omega$. Fuchs's local theory ensures that any quadratic differential in $H^0\left(X, K_X^{\otimes 2}(-D)\right)$ is obtained as the difference of two branched projective structures in $P_X^a$. This shows that when $P_X^a \neq \emptyset$, it is an affine space directed by $H^0\left(X, K_X^{\otimes 2}(-D)\right)$.

Now we do not suppose $P_X^a \neq \emptyset$ anymore.
Denote by $\mathfrak P_X^a$ the \emph{sheaf of branched projective structures} on $X$ of divisor $D$ and branching class $a$ : if $U \subset X$ is an open subset, the space of local sections $\Gamma\left(U, \mathfrak P_X^a\right)$ is the space $P_U^{a|_U}$ of branched projective structures on $U$ of divisor $D|_U$ and branching class $a|_U$.
As we have just proved, the sheaf $\mathfrak P_X^a$ is an affine sheaf directed by $K_X^{\otimes 2}(-D)$.
The sheaf $\mathfrak P_X^a$ is moreover locally nonempty : if $x \in X$, any local holomorphic function around $x$ with the right branching order and branching class at $x$ provides a local section of $\mathfrak P_X^a$.
Thus by lemma \ref{lemma_obstruction_affine_sheaf}, there is a canonical cohomology class $c\left(\mathfrak P_X^a\right) \in H^1\left(X, K_X^{\otimes 2}(-D)\right)$ that vanishes if and only if $H^0\left(X, \mathfrak P_X^a\right) \neq 0$.
We write $c(a) := c\left(\mathfrak P_X^a\right)$.
\end{proof}

\section{Familes of Branched Projective Structures over a Family of Curves}\label{section_families_branched_projective_structures}

By a \emph{family of complex curves}, we mean a triple $(S, \mathcal X, \pi)$ where $\mathcal X$ and $S$ are connected, reduced analytic spaces and $\pi : \mathcal X \to S$ is a proper and smooth morphism.
The analytic space $S$ is called the \emph{base} of $\mathcal F$.
It is equivalent to require that $\pi$ is proper and locally trivial on $\mathcal X$ : each point of $\mathcal X$ admits a neighborhood $U \subset \mathcal X$ such that $V = \pi(U) \subset S$ is open and there exists an isomorphism of analytic spaces $\phi : U \simeq V \times \Omega$ where $\Omega$ is an open subset of $\C$ and $\pi = \pr_1 \circ \phi$, where $\pr_1$ is the projection on the first factor.
In particular, the fibers of $\pi$ are smooth and in the case where $S$ is smooth, $\pi$ is a holomorphic submersion. 
According to a theorem by Ehresmann (see for instance \cite{voisin_en_I}), $\pi : X \to S$ is topologically a locally trivial bundle.

Given a family of complex curves $\mathcal F = (S, \mathcal X, \pi)$, an analytic space $S'$ and a morphism of analytic spaces $f : S' \to S$, we denote by $f^*\mathcal F = \left(S', f^* \mathcal X, f^* \pi\right)$ the \emph{pullback family} of $\mathcal F$ by $\pi$ : $f^*\mathcal X$ is the fiber product $S' \times_S \mathcal X$ and $f^*\pi$ is the projection on the first factor.

\subsection{Families of Divisors over a Family of Curves}\label{subsection_families_divisors}

By a \emph{family of complex curves with divisors}, or more shortly \emph{family of divisors}, we mean a family $\mathcal F = (S, \mathcal X, \mathcal D, \pi)$, where $(S, \mathcal X, \pi)$ is a family of complex curves, and $\mathcal D$ is an effective Cartier divisor on $\mathcal X$ that does not contain any fiber of $\pi$.
Writing $X_s = \pi^{-1}(\{s\})$ ($s \in S$), Rouch\'{e}'s theorem implies that the degree of $D_s = D|_{X_s}$ does not depend on $s$.
The \emph{degree} of the family $\mathcal F$ is the degree of the divisors $D_s$.

As in the case of families of curves, if $f : S' \to S$ is a morphism of analytic spaces and $\mathcal F$ a family of divisors, one can define the \emph{pullback} $f^*\mathcal F$ of $\mathcal F$ by $f$ : the base of $f^*\mathcal F$ is $S$ and the fiber of $f^* \mathcal F$ over $s' \in S'$ is isomorphic to the fiber of $\mathcal F$ over $f(s') \in S'$.

Let $\mathcal G$ be a family of complex curves, with base an analytic space $T$.
A family of divisors \emph{over} $\mathcal G$ is a family of divisors $\mathcal F = (S, \mathcal X, \mathcal D, \pi)$ along with a morphism $f : S \to T$, such that the underlying family of curves $(S, \mathcal X, \pi)$ is the pullback of $\mathcal G$ by $f$ : $(S, \mathcal X, \pi) = f^*\mathcal G$.
If $g : S' \to S$ is a morphism of analytic spaces, the pullback of $\mathcal F$ by $g$ is still a family of divisors over $\mathcal G$, the associated morphism being $f \circ g : S' \to T$.
Take $n \in \N$.
The family $\mathcal F$ is a \emph{universal family of divisors of degree $n$ over $\mathcal G$} if for any analytic space $S'$ and any family $\mathcal F'$ of divisors of degree $n$ over $\mathcal G$ with base $S'$, there exists a unique morphism $g : S' \to S$ such that $\mathcal F' = g^*\mathcal F$.
A universal family of divisors of degree $n$ over $\mathcal G$, if it exists, is clearly unique up to a unique isomorphism.

It is a classical fact that for any family $\mathcal G$ of complex curves, there exists a universal family $\mathcal F$ of divisors of degree $n$ over $\mathcal G$.
Let us recall briefly the construction of $\mathcal F$.
Write $\mathcal G = (T, \mathcal X, \pi)$.
We set $S = \Sym^n_T \mathcal X$ the $n$-th symmetric power of $\mathcal X$ relatively to $T$, that comes with a map $\gamma : S \to T$.
The fiber of $\gamma$ over $t \in T$ is the $n$-th symmetric power of the fiber $X_t$ of $\pi$ over $t$.
Note that $S = \{(t, D)|t \in T, D \text{ effective divisor of degree $n$ on $X_t$}\}$.
The universal family $\mathcal F$ is defined as $(S, \gamma^*\mathcal X, \gamma^*\pi, \mathcal D)$ where $\mathcal D$ is the Cartier divisor on $\gamma^*\mathcal X$ such that for any $s = (t, D) \in S$, $\mathcal D|_{X_s} = D$, where $X_s = \gamma^*\pi^{-1}(\{s\})$.
If $\mathcal F' = (S', \mathcal X', \pi', \mathcal D')$, along with a morphism $f' : S' \to T$ is another family of divisors of degree $n$ over $\mathcal G$, $\mathcal F' = g^*\mathcal F$ where $g : S' \to S$ is the analytic morphism defined for any $s' \in S'$ by $g(s') = \left(f'(s'), D'_{s'}\right)$ where $X'_{s'}$ and $X_{f'(s')}$ are identified (here $X_t = \pi^{-1}(\{t\})$, $X'_{s'} = \pi'^{-1}(\{s'\})$ and $D'_{s'} = \mathcal D'|_{X_{s'}}$).
The morphism $g$ is clearly the only one such that $\mathcal F' = g^*\mathcal F$.
See \cite{barlet-magnusson} for more details on spaces of effective divisors.

\subsection{Families of Branching Classes over a Family of Divisors}\label{subsection_families_branching_classes}

The following definition gives a meaning to the notion of a holomorphic deformation of a branching class.

\begin{definition}\label{definition_branched_atlas}
\begin{itemize}
\item[(i)]
Let $X$ be a Riemann surface and $D$ an effective divisor on $X$.
A \emph{branched atlas} on the complex curve $X$ with branching divisor $D$ is the datum of an open covering $(U_i)_{i \in I}$ of $X$, along with holomorphic functions $f_i : U_i \to \mathbb C$ whose branching divisors are $D|_{U_i}$, such that the branching class of $f_i$ on $U_{ij} = U_i \cap U_j$ is the same as the branching class of $f_j$ on $U_{ij}$.
\item[(ii)]
Let $(S, \mathcal X, \pi, \mathcal D)$ be a family of divisors.
A \emph{branched atlas on $\mathcal X$ relative to $S$, with branching divisor $\mathcal D$} is the datum of an open covering $(U_i)_{i \in I}$ of $\mathcal X$, along with holomorphic functions $f_i : U_i \to \mathbb C$ such that for  any $s \in S$, the family $(U_i \cap X_s, f_i|_{U_i \cap X_s})_{i \in I}$ is a branched atlas on $X_s$ of divisor $D_s$.
Two branched atlases are \emph{equivalent} if their union is still a branched atlas.
A \emph{branching class on $\mathcal X$ relative to $S$, with branching divisor $\mathcal D$} is an equivalence class of branched atlases on $\mathcal X$ relative to $S$, with branching divisor $\mathcal D$.
\end{itemize}
\end{definition}

We are now able to use a vocabulary similar to the one in subsection \ref{subsection_families_divisors}.

\begin{definition}\label{definition_families_bc}
\begin{itemize}
\item[(i)]
Let $n \in \N$.
A \emph{family of complex curves with divisors (of degree $n$) and branching classes}, or more shortly a \emph{family of branching classes (of degree $n$)} is given by a family $\mathcal F = (S, \mathcal X, \pi, \mathcal D, \mathfrak a)$ where $(S, \mathcal X, \pi, \mathcal D)$ is a family of divisors (of degree $n$) and $\mathfrak a$ is a branching class on $\mathcal X$ relative to $S$, with branching divisor $\mathcal D$.
\item[(ii)]
Let $\mathcal G$ be a family of divisors (respectively of complex curves) of basis $T$.
A \emph{family of branching classes over $\mathcal G$} is a family of branching classes $\mathcal F$ of basis $S$, along with a morphism $f : S \to T$ such that the underlying family of divisors (respectively of complex curves) is $f^*\mathcal G$.
\item[(iii)]
Let $S$ be an analytic space, $\mathcal F$ a family of branching classes over $\mathcal G$ with basis $S$.
The family $\mathcal F$ is said to be \emph{universal} if any family $\mathcal F'$ of branching classes over $\mathcal G$ is uniquely obtained as the pullback of $\mathcal F$ by some morphism $g : S' \to S$, where $S'$ is the basis of $\mathcal F'$.
\end{itemize}
\end{definition}

The discussion that will follow in this subsection can be summarized in the following proposition :

\begin{proposition}\label{proposition_universal_family_bc_over_divisor}
Let $\mathcal G = (T, \mathcal X, \pi, \mathcal D)$ be a family of divisors.
There exists a universal family $\mathcal F$ of branching classes over $\mathcal G$.

Moreover, the basis of $\mathcal F$ is an affine bundle over $T$, directed by the vector bundle $\pi_*K_{\mathcal X/T}|_{\mathcal D}$ of relative differentials on $\mathcal X$ restricted to $\mathcal D$ with respect to $T$.
\end{proposition}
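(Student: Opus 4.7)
The plan is to build the base $S$ of $\mathcal F$ as an affine bundle over $T$ by picking reference branching classes locally on $T$ and using sections of $\pi_*K_{\mathcal X/T}|_{\mathcal D}$ to parametrize differences of branching classes, in direct analogy with Proposition \ref{proposition_affine_structure_branching_classes}. The universal property will then follow from the fiberwise identification of each fiber of $S \to T$ with an affine space $A_{X_t}^{D_t}$. The first step is local existence: around any point $x \in \mathcal D$, a local equation $u$ of $\mathcal D$ together with a relative coordinate exhibits $u$ as a relative holomorphic function with relative branching divisor $\mathcal D$ near $x$, and combined with étale relative charts away from $\mathcal D$ this gives a branched atlas on a neighborhood of $\mathcal D|_V$ in $\mathcal X$ relative to a small enough open $V \subset T$. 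Hence the sheaf of relative branching classes with branching divisor $\mathcal D$ is locally nonempty over $T$.

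The second step is to transpose the affine structure of Proposition \ref{proposition_affine_structure_branching_classes} to the relative setting. Given two relative branched atlases $(U_i, f_i)$ and $(U_i, g_i)$ on $\mathcal X/T$ with common divisor $\mathcal D$ and relative coordinates $z_i$ on $U_i$, the difference $[g_i, z_i]dz_i - [f_i, z_i]dz_i$ is a holomorphic relative $1$-form on $U_i$; its jet of the appropriate order along $\mathcal D$ is independent of the coordinate $z_i$ and glues across $i$ into a holomorphic section of $K_{\mathcal X/T}|_{\mathcal D}$, whose proper pushforward to $T$ is a section of $\pi_*K_{\mathcal X/T}|_{\mathcal D}$. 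This section depends only on the two relative branching classes, and a relative version of Fuchs's local existence realizes every section of $\pi_*K_{\mathcal X/T}|_{\mathcal D}$ over a small $V$ as such a difference. Thus over $V$ the set of relative branching classes with divisor $\mathcal D|_V$ is an affine space directed by $\Gamma(V, \pi_*K_{\mathcal X/T}|_{\mathcal D})$.

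The third step is the construction and universality. Cover $T$ by opens $V_\alpha$ admitting reference branching classes $\mathfrak a_\alpha$, let $S_\alpha$ be the total space of $\pi_*K_{\mathcal X/T}|_{\mathcal D}|_{V_\alpha}$, and glue the $S_\alpha$ by the translation cocycle $\mathfrak a_\beta - \mathfrak a_\alpha$ on overlaps. This produces an affine bundle $\gamma : S \to T$ directed by $\pi_*K_{\mathcal X/T}|_{\mathcal D}$, equipped with a tautological relative branching class $\mathfrak a$ on $\gamma^*\mathcal X$, yielding $\mathcal F = (S, \gamma^*\mathcal X, \gamma^*\pi, \gamma^*\mathcal D, \mathfrak a)$. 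Universality is then a bookkeeping exercise: for $\mathcal F' = (S', \mathcal X', \pi', \mathcal D', \mathfrak a')$ with structure morphism $f' : S' \to T$, the universal property of $\Sym^n_T\mathcal X$ forces $\gamma \circ g = f'$, and the remaining data of $g$ is the fiberwise map $s' \mapsto \mathfrak a'|_{X'_{s'}} \in A_{X_{f'(s')}}^{D_{f'(s')}}$; holomorphicity of $g$ in each trivialization $S_\alpha$ is exactly the relative statement of the second step, applied to a branched atlas for $\mathfrak a'$ and the pulled back one for $\mathfrak a_\alpha$.

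The main obstacle is precisely this second step: one must verify that the pointwise constructions of Proposition \ref{proposition_affine_structure_branching_classes} upgrade to morphisms of $\mathcal O_T$-modules, i.e.\ that the relative affine distortion, its jet along $\mathcal D$, and Fuchs's inverse construction all vary holomorphically in $t \in T$. Here the hypothesis that $\mathcal D$ is a Cartier divisor not containing any fiber of $\pi$ is essential, as it is what makes $\pi_*K_{\mathcal X/T}|_{\mathcal D}$ locally free of rank $n$ on $T$ and what allows the relative jet construction to yield genuine vector bundle sections.
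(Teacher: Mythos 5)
Your proposal is correct and follows essentially the same route as the paper: local existence of a relative branched atlas near $\mathcal D$, the two key facts that differences of relative branching classes are holomorphic sections of $\pi_*K_{\mathcal X/T}|_{\mathcal D}$ and that every such section (relative Fuchs) arises this way, and then universality via the fiberwise classifying map $s' \mapsto (f'(s'), \mathfrak a'|_{X'_{s'}})$, whose holomorphicity is exactly the difference statement. The only cosmetic difference is that you build the base by gluing local trivializations of the vector bundle along the translation cocycle $\mathfrak a_\beta - \mathfrak a_\alpha$, whereas the paper first defines the total set $\mathcal A_{\mathcal X}^{\mathcal D}$ of fiberwise branching classes and then endows it with the affine bundle structure using the sections $\sigma_{\mathfrak a}$; these are two presentations of the same object.
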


\begin{remark}
\begin{itemize}
\item[(i)]
With the notations of proposition \ref{proposition_universal_family_bc_over_divisor}, the fiber over $t \in T$ of the vector bundle $\pi_*K_{\mathcal X/T}|_{\mathcal D}$ is $H^0\left(X_t, K_{X_t}|_{D_t}\right)$
\item[(ii)]
It is clear from definition \ref{definition_families_bc} that the universal family in  \ref{proposition_universal_family_bc_over_divisor} is unique up to a unique isomorphism.
\item[(iii)]
It follows from proposition \ref{proposition_universal_family_bc_over_divisor} that for any family of curves $\mathcal G$, there exists a universal family of branching classes over $\mathcal G$.
This is shown by applying proposition \ref{proposition_universal_family_bc_over_divisor} to the universal family of divisors over $\mathcal G$.
\end{itemize}
\end{remark}

For the rest of the subsection, fix a family of divisors $\mathcal G = (T, \mathcal X, \pi, \mathcal D)$.
Let us denote by 
\begin{equation}\label{eq_base_univ_family_bc_over_divisor}
\mathcal A_{\mathcal X}^{\mathcal D} = \left \{(t, a)|t \in T, a \in A_{X_t}^{D_t} \right \}
\end{equation}
the union of all the spaces of branching classes associated to one of the curves with divisor of the family $\mathcal G$.
There is an obvious projection
\begin{equation}\label{eq_proj_bc_divisors}
\delta : \mathcal A_{\mathcal X}^{\mathcal D} \twoheadrightarrow T
\end{equation}
For each $t \in T$, according to proposition \ref{proposition_affine_structure_branching_classes}, the fiber $\delta^{-1}(s) = A_{X_s}^{D_s}$ is an affine space directed by the vector space $H^0(X_t, K_{X_t}|_{D_t}) = \left(\pi_*\left( K_{\mathcal X/T}|_{\mathcal D} \right) \right)_t$.
In order to get on $\mathcal A_{\mathcal X}^{\mathcal D}$ the structure of an affine bundle on $T$ directed by the vector bundle $\pi_*\left( K_{\mathcal X/S}|_{\mathcal D} \right)$, it is enough to specify local holomorphic sections of $\delta$.

Remark that, up to restricting $T$, there exists a branched atlas on $\mathcal X$ relative to $T$ with branching divisor $\mathcal D$.
Indeed, take $t \in T$ and to any point $y_k$ of $D_t = \sum_{k = 1}^r n_k \cdot y_k$, associate a neighborhood $U_k$ of $y_k$ in $\mathcal X$ and a function $g_k : U_k \to \mathbb C$ such that $\mathcal D \cap U_k$ is the zero locus of $g_k$.
Write $V = \pi(U_1) \cap \dots \cap \pi(U_r) \subset T$.
It is a neighborhood of $t$.
Let also $(W_i, f_i)_{i \in I}$ be a relative atlas (i.e. a branched relative atlas with divisor zero) for the family $\pi^{-1}(V) \backslash \mathcal D$.
Such a relative atlas exists because the family $(T, \mathcal X, \pi)$ is locally trivial on $\mathcal X$.
The union of the families $(W_i, f_i)_{i \in I}$ and $(U_k, g_k)_{1 \le k \le r}$ is a branched relative atlas for the family $\pi^{-1}(V)$ with branching divisor $\mathcal D|_{\pi^{-1}(V)}$.

The datum of a branching class $\mathfrak a$ on $\mathcal X$ relative to $T$ with branching divisor $\mathcal D$ defines a section $\sigma_{\mathfrak a} : T \to \mathcal A_{\mathcal X}^{\mathcal D}$ of the projection $\delta$ : $\sigma_{\mathfrak a}(t)$ is the branching class of the branched atlas $\mathfrak a$ restricted to $X_t$.

\begin{lemma}\label{lemma_diff_atlas_proj}
Let $\mathfrak a_1$ and $\mathfrak a_2$ be two branching classes on $\mathcal X$ relative to $T$ with divisor $\mathcal D$.
For each $t \in T$, write $\sigma_{\mathfrak a_2}(t) = \sigma_{\mathfrak a_1}(t) + \phi(t) \in A_{X_t}^{D_t}$, with $\phi(t) \in H^0(X_t, K_{X_t}|_{D_t})$.
Then $\phi$ is a holomorphic section of the vector bundle $\pi_*(K_{\mathcal X/T}|_{\mathcal D})$.
\end{lemma}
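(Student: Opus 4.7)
The plan is to reduce the claim to the local formula from the proof of proposition \ref{proposition_affine_structure_branching_classes} and observe that it varies holomorphically in the base parameter. Since the statement is local on $T$, I fix $t_0 \in T$ and a point $y \in D_{t_0}$ of multiplicity $n$. Using the local triviality of $\pi$ near $y$, I shrink to a neighborhood $V$ of $t_0$ in $T$ and a neighborhood $U \subset \pi^{-1}(V)$ of $y$ on which there exist relative coordinates $(z,t)$ with $\pi(z,t) = t$. By definition \ref{definition_branched_atlas} and the construction of branched relative atlases carried out just before the lemma, after further shrinking I choose holomorphic representative charts $f_1, f_2 : U \to \C$ for $\mathfrak{a}_1|_U$ and $\mathfrak{a}_2|_U$, each with fiberwise branching divisor $\mathcal{D}|_U$.

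I then define the candidate local section by $\omega := [f_2, z]\,dz - [f_1, z]\,dz$, where the affine distortion and differential are taken in the fiber direction $z$. Property (i) in the proof of proposition \ref{proposition_affine_structure_branching_classes}, applied fiber by fiber, ensures that the simple poles of the two affine distortions along $\mathcal{D}$ both have residue $n$ and therefore cancel, so $\omega$ is a fiberwise holomorphic section of $K_{\mathcal X / T}|_U$. Because $f_1$ and $f_2$ depend holomorphically on both $z$ and $t$ and the cancellation is algebraic, $\omega$ is in fact a holomorphic section of the relative cotangent sheaf on all of $U$. By property (iv) the fiberwise $n$-jet of $\omega$ along $\mathcal{D}$ equals $\sigma_{\mathfrak{a}_2}(t) - \sigma_{\mathfrak{a}_1}(t)$ restricted to the points of $D_t$ inside $U$; pushing forward $\omega|_{\mathcal{D} \cap U}$ along the finite proper map $\mathcal{D} \cap U \to V$ thus yields a local holomorphic section of $\pi_*(K_{\mathcal X / T}|_{\mathcal{D}})$ that agrees with $\phi|_V$ on that component of $\mathcal{D}$.

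To conclude I globalize. Property (ii) gives independence from the fiber coordinate $z$, and property (v) combined with the definition of equivalence of branched atlases yields independence from the choice of the representatives $f_1, f_2$ within their classes. The local sections constructed near distinct points of $\mathcal{D}$ therefore glue to a global holomorphic section of $\pi_*(K_{\mathcal X / T}|_{\mathcal{D}})$, which by construction coincides fiberwise with $\phi$.

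The step I expect to be the main obstacle is verifying the joint holomorphicity of $\omega$ across $\mathcal{D}$, not merely fiberwise. In local coordinates in which $\mathbf{y}(t)$ tracks the moving branch point, one expands $[f_i, z]\,dz = n \, dz/(z - \mathbf{y}(t)) + (\text{holomorphic in } (z,t))$; the principal parts agree for $i=1,2$ since they depend only on the locally constant branching order $n+1$, and so subtract to zero, leaving a joint holomorphic difference. Once this point is secured, everything else is a routine invocation of proposition \ref{proposition_affine_structure_branching_classes} and of the fact that $\pi_*(K_{\mathcal X / T}|_{\mathcal{D}})$ is a locally free sheaf whose local sections are exactly the sections of $K_{\mathcal X / T}|_{\mathcal{D}}$.
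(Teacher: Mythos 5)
Your overall strategy is the same as the paper's: reduce to showing that the difference of the fiberwise affine distortions of two representative charts is \emph{jointly} holomorphic on a neighborhood of a point of $\mathcal D$, then invoke the local description of $A_X^D$ from proposition \ref{proposition_affine_structure_branching_classes} to identify the resulting jet along $\mathcal D$ with $\phi$. You correctly identify the joint holomorphicity across $\mathcal D$ as the crux. The problem is in how you discharge it. Your expansion $[f_i,z]\,dz = n\,dz/(z-\mathbf y(t)) + (\text{holomorphic in }(z,t))$ presupposes that, near $y \in D_{t_0}$, the divisor $\mathcal D$ is of the form $n\cdot\{z = \mathbf y(t)\}$ for a single holomorphic function $\mathbf y$, i.e.\ that the branch point of multiplicity $n$ persists as a single point of the same multiplicity for all nearby $t$. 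In the generality of the lemma this fails: $\mathcal D$ is an arbitrary Cartier divisor not containing a fiber, and a point of multiplicity $n$ in $D_{t_0}$ can split into several points of smaller multiplicities for nearby $t$ (locally, $\mathcal D$ can be $\{z^2 = t\}$, say). On such a locus there is no holomorphic $\mathbf y(t)$ and your formula for the principal part does not make sense, so the step you yourself flag as the main obstacle is only justified on the open stratum where the partition type of $D_t$ is locally constant --- which is exactly where the lemma is delicate, since $\pi_*(K_{\mathcal X/T}|_{\mathcal D})$ must be shown to receive a holomorphic section across the collision locus.

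The paper's proof closes this gap with a coordinate-free factorization that you could substitute for your expansion. Since $f_1$ and $f_2$ have the same fiberwise branching divisor, the functions $\partial_w f_1$ and $\partial_w f_2$ have the same vanishing divisor on $U$ (namely $\mathcal D\cap U$, whatever its local structure), hence $\partial_w f_2 = h\cdot \partial_w f_1$ for a nonvanishing holomorphic $h : U \to \C^*$, and then
\begin{equation*}
\frac{\partial_w^2 f_2}{\partial_w f_2} - \frac{\partial_w^2 f_1}{\partial_w f_1} = \frac{\partial_w h}{h},
\end{equation*}
which is manifestly holomorphic in both variables with no case distinction on the configuration of branch points. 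Equivalently, in your language: the ``principal part'' of $[f_i,z]\,dz$ along $\mathcal D$ should be written as $\partial_w e/e$ for a single local defining equation $e$ of $\mathcal D$ used for both charts, rather than as $n/(z-\mathbf y(t))$; with that replacement your argument goes through. The remaining globalization and independence-of-choices steps in your write-up are fine.
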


\begin{proof}
Take $t_0 \in T$, and let $y \in X_t$ be a point of $D_t$.
The local triviality on $\mathcal X$ of $(T, \mathcal X, \pi)$ implies that there exists a local neighborhood $V$ of $t_0$ in $T$, an open neighborhood $U \subset \mathcal X$ of $y$ with $\pi(U) = V$, and local coordinates $(z, w): U \to V  \times \mathbb C$ with $\pi(z, w) = z$.
Without loss of generality, suppose $U$ is the domain of a chart both in $\mathfrak a_1$ and $\mathfrak a_2$.
Take charts $f, g : U \to \mathbb C$ of $\mathfrak a_1$ and $\mathfrak a_2$ respectively.
According to the proof of proposition \ref{proposition_affine_structure_branching_classes}, it is enough to show that the function
$$\frac{\partial^2_w g}{\partial_w g} - \frac{\partial^2_w f}{\partial_w f} : U \to \mathbb C$$
is holomorphic.

The functions $f$ and $g$ have the same branching divisor when restricted to each fiber of $X$, i.e. the functions $\partial_w g$ et $\partial_w f$ have the same vanishing locus.
Thus there exists a nonvanishing holomorphic function $h : U \to \mathbb C^*$ such that $\partial_w g = h \cdot \partial_w f$.
Thus one has $\partial^2_w g /\partial_w g - \partial^2_w f / \partial_w f = \partial_w h / h$, which is holomorphic.
\end{proof}

As a consequence of lemma \ref{lemma_diff_atlas_proj}, $\mathcal A_{\mathcal X}^{\mathcal D}$ admits a structure of affine bundle for which the sections coming from branched relative atlases on $\mathcal G$ are holomorphic.

\begin{lemma}\label{lemma_branched_atlas_holomorphic_section}
Let $\sigma : T \to \mathcal A_{\mathcal X}^{\mathcal D}$ be a holomorphic section. 
There exists a branching class $\mathfrak a$ on $\mathcal X$, relative to $T$ and of divisor $\mathcal D$ such that $\sigma = \sigma_{\mathfrak a}$.
\end{lemma}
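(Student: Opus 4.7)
The plan is to realize $\sigma$ by modifying, locally on $\mathcal X$, an auxiliary relative branched atlas; the global construction then follows from two formal gluing arguments. First, the statement is local on $T$: if we construct, on an open cover $(V_\alpha)_{\alpha}$ of $T$, relative branching classes $\mathfrak a_\alpha$ on $\pi^{-1}(V_\alpha)$ with $\sigma_{\mathfrak a_\alpha} = \sigma|_{V_\alpha}$, then on overlaps $\mathfrak a_\alpha$ and $\mathfrak a_\beta$ induce the same branching class on every fiber (namely $\sigma(t)$), hence are equivalent as relative branched atlases and glue. So we may assume, by the argument preceding lemma \ref{lemma_diff_atlas_proj}, that there exists a relative branched atlas $\mathfrak a_0$ on $\mathcal X$ with branching divisor $\mathcal D$. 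Set $\phi := \sigma - \sigma_{\mathfrak a_0}$, a global holomorphic section of $\pi_* K_{\mathcal X/T}|_{\mathcal D}$; the goal is to deform $\mathfrak a_0$ by $\phi$.

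Second, the construction is local on $\mathcal X$: I will construct for every $y \in \mathcal X$ a neighborhood $U_y$ and a holomorphic chart $f_y : U_y \to \mathbb C$ whose fiberwise branching class at each divisor point of $U_y$ coincides with the value prescribed by $\sigma$. The collection $(U_y, f_y)_{y \in \mathcal X}$ then defines a relative branched atlas whose branching class $\mathfrak a$ satisfies $\sigma_{\mathfrak a} = \sigma$, since on overlaps the fiberwise branching divisors and branching classes all coincide with those prescribed by $\mathcal D$ and $\sigma$. Away from $\mathcal D$, take $f_y$ to be a chart of $\mathfrak a_0$ restricted to a neighborhood disjoint from $\mathcal D$; the branching class condition is then vacuous.

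Near a point $y \in \mathcal D$, the key input is a relative version of items (iii) and (iv) in the proof of proposition \ref{proposition_affine_structure_branching_classes}. Trivialize $\pi$ to coordinates $(t, z)$ on $U_y$ and pick a chart $f_0$ of $\mathfrak a_0$ defined near $y$. Locally lift $\phi|_{\pi(U_y)}$ to a relative holomorphic $1$-form $\alpha(t, z)\, dz$ on $U_y$, which is possible since $\phi$ is only a jet along $\mathcal D$ and $\pi_* K_{\mathcal X/T}|_{\mathcal D}$ is locally free. Define $f_y$ by $\partial_z f_y = \partial_z f_0 \cdot \exp\left(\int \alpha\, dz\right)$ and integrating once more in $z$; the resulting $f_y$ depends holomorphically on $(t, z)$, has the same relative branching divisor as $f_0$ (because the exponential factor is nonvanishing), and satisfies $[f_y, z] - [f_0, z] = \alpha$. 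The fiberwise branching class of $f_y$ at the divisor point therefore differs from that of $f_0$ by the appropriate jet of $\alpha\, dz$, which equals $\phi$ by construction, so $f_y$ realizes $\sigma$.

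The main obstacle is this last local construction: one needs to verify, in a relative setting, that the explicit integration produces a holomorphic function of $(t, z)$ with the correct relative branching divisor and the prescribed fiberwise branching class. This is essentially a parametric version of the proof of proposition \ref{proposition_affine_structure_branching_classes}. Once it is established, both gluing arguments are formal, since a relative branching class is determined fiberwise by its values at the divisor points.
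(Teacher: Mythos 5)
Your proposal is correct and follows essentially the same route as the paper: localize on $T$ to obtain an auxiliary relative branched atlas $\mathfrak a_0$, lift the jet $\sigma - \sigma_{\mathfrak a_0}$ to a genuine relative $1$-form near each divisor point, and modify the chart by the nonvanishing factor $\exp\bigl(\int \alpha\, dz\bigr)$ (the paper's $h$ with $\partial_w h / h = \varphi$) before completing with unbranched charts away from $\mathcal D$. The gluing arguments you invoke are the same ones the paper uses.
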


\begin{proof}
The union of two branched relative atlases with the same branching classes given by $\sigma$ is still a branched relative atlas with relative branching classes given by $\sigma$.
As a consequence, it is enough to prove the lemma locally on $T$.

Take $t \in T$.
Up to restricting $T$ to an open neighborhood of $t$, there exists a relative branching class $\mathfrak a_0$ on $\mathcal G$, with associated section $\sigma_{\mathfrak a_0} : T \to \mathcal A_{\mathcal X}^{\mathcal D}$.
Let $y \in X_t$ be a point of $D_t$.
Let $(U, f)$ be a chart of an atlas in $\mathfrak a_0$ with $y \in U$.
Since $(T, \mathcal X, \pi)$ is locally trivial on $\mathcal X$, up to restricting $U$, one has a local biholomorphism $(z, w) : U \to V \times \C$, where $V = \pi(U)$ and $\pi(z, w) = z$.
The difference $\sigma-\sigma_{\mathfrak a_0}$ is a section of the vector bundle $\pi_*(K_{\mathcal X/T}|_{\mathcal D})$ on $T$, that can be seen as a section of the vector bundle $K_{\mathcal X / T}|_{\mathcal D}$ on the analytic space $\mathcal D$. 
With this point of view, up to restricting $U$, $(\sigma-\sigma_{\mathfrak a_0})|_U$ is given by $\phi|_{\mathcal D}$, where $\phi$ is a section on $U$ of $K_{\mathcal X/T}$, given by $\phi = \varphi(z, w)dw$.
Up to restricting $U$ again (in particular so that it is contractible), there exists a holomorphic function that never vanishes $h : U \to \mathbb C^*$ such that $\varphi = \partial_w h / h$. By restricting $U$ once more, one gets a function $g : U \to \mathbb C$ such that $\partial_w g = h \cdot \partial_w f$.
The branching divisor of $g$ is then $\mathcal D|_U$ and its branching classes on the fibers of $\pi$ are given by $\sigma$.

By doing so at each point of $D_t$, by restricting $T$ to the intersection of the obtained open sets $\pi(U)$, and then by completing with a relative atlas on $\mathcal X \backslash \mathcal D$, one gets a branched relative atlas $\mathfrak a$ on $\mathcal G$ with $\sigma = \sigma_{\mathfrak a}$.
\end{proof}

The pullback family of divisors $\delta^*\mathcal G = \left(\mathcal A_{\mathcal X}^{\mathcal D}, \delta^*\mathcal X, \delta^* \pi, \delta^* \mathcal D\right)$ comes with a tautological relative branching class $\mathfrak a_0$~: if $(t, a) \in \mathcal A_{\mathcal X}^{\mathcal D}$, the branching class defined by $\mathfrak a_0$ on $\left(\delta^*\pi\right)^{-1}(t, a) = X_t$ with divisor $D_t$ is $a$.
Let us show that $\mathfrak a_0$ is indeed a relative branching class.
The pullback by $\delta$ of a relative branching class on $\mathcal X$ relative to $T$ of divisor $\mathcal D$ is clearly a branching class on $\delta^* \mathcal X$, relative to $\mathcal A_{\mathcal X}^{\mathcal D}$ of divisor $\delta^* \mathcal D$.
Thus the analytic space $\mathcal A_{\delta^* \mathcal X}^{\delta^*\mathcal D}$ of branching classes on the fibers of $\delta^* \mathcal G$ is the analytic fiber product $\mathcal A_{\mathcal X}^{\mathcal D} \times_T \mathcal A_{\mathcal X}^{\mathcal D}$.
The tautological relative branching class $\mathfrak a_0$ comes from the diagonal section of $\mathcal A_{\delta^* \mathcal X}^{\delta^*\mathcal D}$, that is holomorphic.

The family $\mathcal F := \left(\mathcal A_{\mathcal X}^{\mathcal D}, \delta^*\mathcal X, \delta^* \pi, \delta^* \mathcal D, \mathfrak a_0\right)$, along with the morphism $\delta : \mathcal A_{\mathcal X}^{\mathcal D} \to T$ is a family of branching classes over $\mathcal G$.

\begin{lemma}\label{lemma_univ_family_bc}
The family $\mathcal F$ above, along with the map $\delta$, is a universal family of branching classes over $\mathcal G$.
\end{lemma}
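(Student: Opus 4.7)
The plan is to construct, from a given family of branching classes $\mathcal F' = (S', \mathcal X', \pi', \mathcal D', \mathfrak a')$ over $\mathcal G$ with classifying morphism $f' : S' \to T$, a unique holomorphic map $g : S' \to \mathcal A_{\mathcal X}^{\mathcal D}$ with $\mathcal F' = g^*\mathcal F$, proceeding set-theoretically first and then upgrading to analyticity.

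First I would define $g$ as a map of sets. For each $s' \in S'$, the identification $X'_{s'} \simeq X_{f'(s')}$ (coming from the equality $(S', \mathcal X', \pi', \mathcal D') = (f')^*\mathcal G$) sends $D'_{s'}$ to $D_{f'(s')}$ and transports the branching class $\mathfrak a'|_{X'_{s'}}$ to some $a_{s'} \in A_{X_{f'(s')}}^{D_{f'(s')}}$. Set $g(s') := (f'(s'), a_{s'}) \in \mathcal A_{\mathcal X}^{\mathcal D}$. Any morphism realising $\mathcal F'$ as $g^*\mathcal F$ must satisfy $\delta \circ g = f'$ and must pull $\mathfrak a_0$ back to $\mathfrak a'$; since the fibre of $\mathfrak a_0$ over $(t, a)$ is by construction $a$, this forces the above formula and gives uniqueness.

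The main step is to show that $g$ is holomorphic, and this is where I would leverage the fibered-product identification already used just before the lemma statement. Applied to $f' : S' \to T$ instead of $\delta$, the same argument shows that the base of the universal family of branching classes over $(f')^*\mathcal G$ is the analytic fibered product $\mathcal A_{\mathcal X'}^{\mathcal D'} \simeq S' \times_T \mathcal A_{\mathcal X}^{\mathcal D}$, with projection onto $S'$ the structural map and projection onto $\mathcal A_{\mathcal X}^{\mathcal D}$ the classifying map of the pullback of the tautological branching class. By Lemma \ref{lemma_branched_atlas_holomorphic_section} applied to the family of divisors $(f')^*\mathcal G$, the relative branching class $\mathfrak a'$ defines a holomorphic section $\sigma_{\mathfrak a'} : S' \to \mathcal A_{\mathcal X'}^{\mathcal D'}$. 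Composing $\sigma_{\mathfrak a'}$ with projection onto $\mathcal A_{\mathcal X}^{\mathcal D}$ yields a holomorphic map $S' \to \mathcal A_{\mathcal X}^{\mathcal D}$ which agrees, fibre by fibre, with the set-theoretic $g$ defined above; hence $g$ is holomorphic.

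Finally I would check $\mathcal F' = g^*\mathcal F$. The underlying family of divisors of $g^*\mathcal F$ is $g^*\delta^*\mathcal G = (\delta \circ g)^*\mathcal G = (f')^*\mathcal G$, which by hypothesis is the underlying family of divisors of $\mathcal F'$. The relative branching class $g^*\mathfrak a_0$ restricts on each fibre $X'_{s'}$ to $a_{s'}$ by construction of $\mathfrak a_0$, so it induces the same section of $\mathcal A_{\mathcal X'}^{\mathcal D'} \to S'$ as $\mathfrak a'$; since by Lemma \ref{lemma_diff_atlas_proj} a relative branching class is determined by its associated section, this gives $g^*\mathfrak a_0 = \mathfrak a'$. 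The main obstacle is Step 3: one must carefully set up the fibered-product description $\mathcal A_{(f')^*\mathcal X}^{(f')^*\mathcal D} \simeq S' \times_T \mathcal A_{\mathcal X}^{\mathcal D}$ and observe that holomorphic sections of this bundle over $S'$ are exactly pairs $(\mathrm{id}_{S'}, g)$ with $\delta \circ g = f'$, so that invoking the previous lemma turns $\mathfrak a'$ directly into a holomorphic map $g$.
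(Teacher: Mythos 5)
Your proposal is correct and follows essentially the same route as the paper: define $g(s') = (f'(s'), a_{s'})$ set-theoretically, observe uniqueness from $\delta \circ g = f'$ and the fibrewise compatibility, and obtain holomorphy by identifying $\mathcal A_{\mathcal X'}^{\mathcal D'}$ with the analytic fibre product $S' \times_T \mathcal A_{\mathcal X}^{\mathcal D}$, viewing $\mathfrak a'$ as a holomorphic section of it, and composing with the projection onto $\mathcal A_{\mathcal X}^{\mathcal D}$. Your closing verification that $g^*\mathfrak a_0 = \mathfrak a'$ is slightly more explicit than the paper's (which leaves it implicit), but the argument is the same; note only that the fact that a relative branching class is determined by its fibrewise restrictions follows directly from the definition of equivalence of branched relative atlases rather than from Lemma \ref{lemma_diff_atlas_proj}.
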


\begin{proof}
Let $\mathcal F' = (S', \mathcal X', \pi', \mathcal D', \mathfrak a')$, along with a morphism $f' : S' \to T$, be a family of branching classes over $\mathcal G$. For any $s' \in S'$, the relative branched atlas $\mathfrak a'$ defines a branching class $a_{s'}$ over $\left(X'_{s'}, D'_{s'}\right) \simeq \left(X_{f'(s')}, D_{f'(s')}\right)$.
Thus we get a map $g : S' \to \mathcal A_{\mathcal X}^{\mathcal D}$ given by $g(s') = (f'(s'), a_{s'})$.
One has $f' = \delta \circ g$, for any $s' \in S'$, the fiber of $\mathcal F'$ over $s'$ is identified to the fiber of $\mathcal F$ over $g(s')$, and $g$ is the only map from $S'$ to $\mathcal A_{\mathcal X}^{\mathcal D}$ that fulfills these two conditions.

To complete the proof of the lemma, we only need to show that $g$ is analytic.
Clearly, $\mathcal A_{\mathcal X'}^{\mathcal D'}$ is the analytic fiber product $\mathcal A_{\mathcal X'}^{\mathcal D'} = S' \times_T \mathcal A_{\mathcal X}^{\mathcal D}$.
By lemma \ref{lemma_branched_atlas_holomorphic_section}, the relative branching class $\mathfrak a'$ gives an analytic section $\sigma_{\mathfrak a}$ of $\mathcal A_{\mathcal X'}^{\mathcal D'}$, that can be viewed as an analytic morphism $\sigma_{\mathfrak a} : S' \to S' \times_T \mathcal A_{\mathcal X}^{\mathcal D}$.
The map $g$ is this analytic morphism composed with the projection on $A_{\mathcal X}^{\mathcal D}$, which is analytic.
\end{proof}

\subsection{Families of Branched Projective Structures}\label{subsection_families_bps}

The following definition gives a meaning to the notion of a holomorphic deformation of a branched projective structure.

\begin{definition}\label{definition_relative_bps}
Let $(S, \mathcal X, \pi, \mathcal D, \mathfrak a)$ be a family of branching classes.
A \emph{branched projective atlas on $\mathcal X$ relative to $S$ with branching divisor $\mathcal D$ and relative branching class $\mathfrak a$} is the datum of an atlas on $\mathcal X$, relative to $S$, of divisor $D$ and relative branching class $\mathfrak a$ such that the branched atlas induced on each $X_s$ is a branched projective atlas.
Two relative branched projective atlases are \emph{equivalent} if their union is still a relative branched projective atlas.
A \emph{branched projective structure} relative to $S$ with branching divisor $\mathcal D$ and relative branching class $\mathfrak a$ is an equivalence class of branched projective atlases on $\mathcal X$ relative to $S$ with branching divisor $\mathcal D$ and relative branching class $\mathfrak a$.
\end{definition}

We are now able to introduce a vocabulary similar to the ones in subsections \ref{subsection_families_divisors} and \ref{subsection_families_branching_classes}.

\begin{definition}\label{definition_families_bps}
\begin{itemize}
\item[(i)]
Let $n \in \N$.
A \emph{family of complex curves with divisors (of degree $n$), branching classes and branched projective structures}, or more shortly a \emph{family of branched projective structures (of degree $n$)} is given by a family $\mathcal F = (S, \mathcal X, \pi, \mathcal D, \mathfrak a, \mathfrak p)$ where $(S, \mathcal X, \pi, \mathcal D, \mathfrak a)$ is a family of branching classes (of degree $n$) and $\mathfrak p$ is a branched projective structure on $\mathcal X$ relative to $S$, with branching divisor $\mathcal D$ and branching class $\mathfrak a$.
\item[(ii)]
Let $\mathcal G$ be a family of branching classes (respectively of complex curves, respectively of divisors) of basis $T$.
A \emph{family of branched projective structures over $\mathcal G$} is a family of branched projective structures $\mathcal F$ of basis $S$, along with a morphism $f : S \to T$ such that the underlying family of branching classes (respectively of complex curves, respectively of divisors) is $f^*\mathcal G$.
\item[(iii)]
Let $S$ be an analytic space, $\mathcal F$ a family of branched projective structures over $\mathcal G$ with basis $S$.
The family $\mathcal F$ is said to be \emph{universal} if any family $\mathcal F'$ of branched projective structures over $\mathcal G$ is uniquely obtained as the pullback of $\mathcal F$ by some morphism $g : S' \to S$, where $S'$ is the basis of $\mathcal F'$.
\end{itemize}
\end{definition}

The aim of this section is to prove the following proposition :

\begin{proposition}\label{proposition_universal_family_bps_over_bc}
Let $\mathcal G$ be a family of branching classes. There exists a universal family of branched projective structures over $\mathcal G$.
\end{proposition}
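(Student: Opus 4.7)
The plan is to follow the same template as in subsection \ref{subsection_families_branching_classes}: construct the base set-theoretically, equip it with an analytic structure via local models coming from a relative affine sheaf, and then check the universal property. Writing $\mathcal G = (T, \mathcal X, \pi, \mathcal D, \mathfrak a)$, I first define
\begin{equation*}
\mathcal P_{\mathcal X}^{\mathcal D, \mathfrak a} = \left\{(t, p) \mid t \in T, \, p \in P_{X_t}^{a_t}\right\}
\end{equation*}
with the natural projection $\beta : \mathcal P_{\mathcal X}^{\mathcal D, \mathfrak a} \to T$. By proposition \ref{proposition_space_bps_given_bc}, the fiber of $\beta$ over $t$ is either empty (when $c(a_t) \neq 0$) or an affine space directed by $H^0(X_t, K_{X_t}^{\otimes 2}(-D_t))$.

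The first step is a relative analog of proposition \ref{proposition_space_bps_given_bc}, modelled on lemmas \ref{lemma_diff_atlas_proj} and \ref{lemma_branched_atlas_holomorphic_section}. Via the relative Schwarzian, the difference of two relative branched projective structures $\mathfrak p_1, \mathfrak p_2$ over $\mathcal G$ with common branching class $\mathfrak a$ should produce a holomorphic section of $\pi_* K_{\mathcal X/T}^{\otimes 2}(-\mathcal D)$ on $T$: the local discrepancies $\{f_i^{(2)}, z_i\}\, dz_i^{\otimes 2} - \{f_i^{(1)}, z_i\}\, dz_i^{\otimes 2}$ extracted from charts of relative atlases are holomorphic in the base parameter by the same calculation as in lemma \ref{lemma_diff_atlas_proj}, and glue to a relative quadratic differential vanishing on $\mathcal D$. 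Conversely, the relative version of Fuchs's local theory invoked in the proof of proposition \ref{proposition_space_bps_given_bc} shows that any holomorphic section of this direct image locally arises from such a modification. Together these endow the sheaf on $T$ of relative branched projective structures of branching class $\mathfrak a$ with the structure of a sheaf of affine spaces, possibly with empty sections, directed by $\pi_* K_{\mathcal X/T}^{\otimes 2}(-\mathcal D)$.

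I then put an analytic structure on $\mathcal P_{\mathcal X}^{\mathcal D, \mathfrak a}$ via local models. On an open $V \subset T$ on which a relative branched projective structure $\mathfrak p_0$ exists, the assignment $(t, p) \mapsto (t, p - \mathfrak p_0|_{X_t})$ identifies $\beta^{-1}(V)$ with the linear space (in the sense of Grauert) associated to the coherent sheaf $\pi_* K_{\mathcal X/T}^{\otimes 2}(-\mathcal D)|_V$; compatibility of these identifications on overlaps reduces to the Schwarzian cocycle of step one. To cover the image of $\beta$, one invokes that the fiberwise obstructions $c(a_t)$ are controlled by the coherent sheaf $R^1\pi_* K_{\mathcal X/T}^{\otimes 2}(-\mathcal D)$, so that over a sufficiently small neighborhood of any $t_0$ with $c(a_{t_0}) = 0$ a relative branched projective structure exists by a relative version of lemma \ref{lemma_obstruction_affine_sheaf}. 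The tautological family on $\beta^* \mathcal G$ is built chart by chart on these local models as $\mathfrak p_0$ shifted by the tautological section of the pullback linear space, and the universal property then follows exactly as in lemma \ref{lemma_univ_family_bc}: a family $\mathfrak p'$ over $f' : S' \to T$ produces the map $s' \mapsto (f'(s'), \mathfrak p'|_{X'_{s'}})$, whose analyticity is forced by the relative Schwarzian step applied to $\mathfrak p'$. The main obstacle is managing the possible jumps of $h^0(X_t, K_{X_t}^{\otimes 2}(-D_t))$ and the non-surjectivity of $\beta$ on $\{t : c(a_t) \neq 0\}$: one must work with linear spaces of coherent sheaves rather than vector bundles, so that in general $\mathcal P_{\mathcal X}^{\mathcal D, \mathfrak a}$ is only an analytic subspace of an affine bundle over $T$, matching the dichotomy between $n < 2g-2$ and $n > 4g-4$ anticipated in the abstract.
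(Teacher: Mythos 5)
Your first step, the relative Schwarzian calculus, is exactly the paper's lemma \ref{lemma_relative_bps_schwarzian_derivative}, and your construction does go through verbatim in the two ranges $n<2g-2$ and $n>4g-4$, where $h^0\left(X_t, K_{X_t}^{\otimes 2}(-D_t)\right)$ is constant in $t$; this is precisely proposition \ref{proposition_univ_family_bps_low_high_deg}. The genuine gap is in the intermediate range $2g-2 \le n \le 4g-4$, and it cannot be closed by the devices you invoke. Two of your concrete steps fail there. First, the claim that a relative branched projective structure exists over a small neighborhood of any $t_0$ with $c(a_{t_0})=0$ is false: such a relative structure would restrict to an element of $P_{X_t}^{a_t}$ for every $t$ in that neighborhood and hence force $c(a_t)=0$ throughout it, whereas in general $\{t : c(a_t)=0\}$ is a proper analytic subset accumulating at $t_0$. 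So the open sets $V$ carrying your local models do not cover the image of $\beta$. Second, the identification of $\beta^{-1}(V)$ with the linear space of $\pi_*K_{\mathcal X/T}^{\otimes 2}(-\mathcal D)|_V$ is not even set-theoretically correct at points where $h^0$ jumps, because cohomology does not commute with base change there: the fiber of that linear space is computed from the stalk of the direct image, which differs from $H^0\left(X_t, K_{X_t}^{\otimes 2}(-D_t)\right)$ exactly on the jump locus. For the same reason $R^1\pi_*K_{\mathcal X/T}^{\otimes 2}(-\mathcal D)$ is not locally free, so the obstruction $\mathfrak c(\mathfrak a)$ is not a section of a vector bundle and ``its zero locus'' together with the universal property of that locus is not defined without further work --- and that is the entire content of the proposition in this range.

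The idea you are missing is the paper's auxiliary divisor. One chooses an effective divisor $\mathcal D'$ on $\mathcal X$ containing no fiber, of fiberwise degree $n'$ with $n+n'>4g-4$, and interposes the \emph{restricted} branched projective structures, i.e.\ jets of projective structures along $\mathcal D'$. Their moduli space is an honest affine bundle over $T$ directed by $\pi_*\left(K_{\mathcal X/T}^{\otimes 2}(-\mathcal D)|_{\mathcal D'}\right)$, whose rank never jumps because the sheaf is supported on $\mathcal D'$; and over that affine bundle the full structures form the zero locus of an obstruction section of the genuinely locally free sheaf $R^1\pi_*K_{\mathcal X/T}^{\otimes 2}(-\mathcal D-\mathcal D')$, since $h^0\left(X_t, K_{X_t}^{\otimes 2}(-D_t-D'_t)\right)=0$ identically. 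Composing the two universal families, and proving uniqueness of the classifying morphism (which requires showing that any classifying map must factor through the restricted family in the prescribed way), yields the proposition. Without this reduction, or an equivalent base-change argument that you would have to supply in detail, your construction only establishes the two extreme cases.
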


\begin{remark}
It is clear from definition \ref{definition_families_bc} that the universal family in proposition \ref{proposition_universal_family_bps_over_bc} is unique up to a unique isomorphism.
\end{remark}

Before diving into the proof of proposition \ref{proposition_universal_family_bps_over_bc} in the next subsections, let us state a corollary that is one of the main results of this paper.
Let us firsrt recall the definition of a \emph{marked} Riemann surface.

\begin{definition}
Let $X$ be a compact Riemann surface.
Let $\mathcal F = (X, S, \pi)$ be a family of Riemann surfaces
\begin{itemize}
\item[(i)] If $X$ has genus $0$, a \emph{marking} on $X$ is the datum of $3$ distinct points on $X$.
A marking on $\mathcal F$ is the datum of three holomorphic sections of $\pi$ that do not intersect with each other.
\item[(ii)] If $X$ has genus $1$ and $S$ is a differential surface of genus $1$, a \emph{marking on $X$ with reference $S$} is the datum of a point in $X$ and an isotopy class of diffeomorphisms from $S$ to $X$.
A marking on $\mathcal F$ with reference $S$ is the datum of a holomorphic section of $\pi$, and an isotopy class of diffeomorphisms from $S \times X$ to $\mathcal X$ that commute with $\pi$.
\item[(iii)] If $X$ has genus $g \ge 2$ and $S$ is a differential surface of genus $g$, a \emph{marking on $X$ with reference $S$} is the datum of an isotopy class of diffeomorphisms from $S$ to $X$.
A marking on $\mathcal F$ with reference $S$ is the datum of an isotopy class of diffeomorphisms from $S \times X$ to $\mathcal X$ that commute with $\pi$.
\end{itemize}
A \emph{marked compact Riemann surface} is a compact Riemann surface with a marking, where a differential surface $S$ is supposed fixed.
\end{definition}

In particular, there is at most one isomorphism between two marked Riemann surfaces.

Marked (families of) divisors (respectively branching classes, respectively branched projective structures) are (families of) divisors (respectively branching classes, respectively branched projective structures) along with a marking on the underlying (family of) curves.
A  marked family of divisors (respectively branching classes, respectively branched projective structures) $\mathcal F$ is said to be \emph{over} a marked family of curves (respectively divisors, respectively branching classes) $\mathcal G$ if $\mathcal F$ is given as the pullback of $\mathcal G$ by an analytic morphism.

Propositions \ref{proposition_universal_family_bc_over_divisor} and \ref{proposition_universal_family_bps_over_bc} remain true if we consider marked families : the marking on the universal family is obtained by pulling back the marking of the base family.

Let us give a definition of a \emph{universal family}, that is not over another family :

\begin{definition}\label{definition_universal_family}
Take $n, g \in \N$
A family $\mathcal F$ of (marked) Riemann surfaces (respectively divisors of degree $n$, respectively branching classes of degree $n$, respectively branched projective structures of degree $n$) of genus $g$ is said to be \emph{universal} if any other such family $\mathcal F'$ can be given in a unique way as a pullback of $\mathcal F'$.
\end{definition}

Let $g \in \N$. There is no universal family of curves of genus $g$.
In particular the tautological family over the moduli space $\mathcal M_g$ of genus $g$ curves is not universal, since nontrivial isotrivial families are not pullbacks of it.
There exists however a universal family of \emph{marked} Riemann surfaces of genus $g$, given by $\mathcal U_g = \left(\mathcal T_g, \mathcal X_g, \pi_g, m_g\right)$, where $\mathcal T_g$ is the Teichmüller space for curves of genus $g$, $\mathcal X_g$ is the tautological family over $\mathcal T_g$, $\pi_g$ is the projection and $m_g$ is the tautological marking.
For $n \in \N$, denote by $\mathcal U_g^{\Div}(n)$ the marked family of divisors of degree $n$ obtained as the universal family of divisors of degree $n$ over $\mathcal U_g$, along with the pullback marking.

Let $\mathcal U_g^{\BC}(n)$ be the universal marked family of branching classes over $\mathcal U_g^{\Div}(n)$, given by proposition \ref{proposition_universal_family_bc_over_divisor}, and let $\mathcal U_g^{\BPS}(n)$ be the universal marked family of branched projective structures over $\mathcal U_g^{\BC}(n)$, given by proposition \ref{proposition_universal_family_bps_over_bc}.
The family $\mathcal U_g^{\BPS}(n)$ is clearly the universal family of marked branched projective structures of genus $g$ and degree $n$.
Denote by $\mathcal P_g(n)$ the basis of the family $\mathcal U_g^{\BPS}(n)$.
Now let $p$ be a marked branched projective structure of degree $n$ and genus $n$.
The structure $p$ can be seen as a family over a single-point analytic space $\{x\}$.
By universality of $\mathcal U_g^{\BPS}(n)$, the exists a unique morphism $f : \{x\} \to \mathcal P_g(n)$ such that the family $p$ is the pullback of $\mathcal U_g^{\BPS}(n)$ by $f$.
To the structure $p$ is thus associated a unique point in $\mathcal P_g(n)$.
In particular, the structure $p$ is isomorphic to the fiber of $\mathcal U_g^{\BPS}(n)$ over the associated point.
Clearly, two marked branched projective structures of genus $g$ and degree $n$ have the same associate point if and only if they are isomorphic, and any point in $\mathcal P_g(n)$ is associated to the fiber of $\mathcal U_g^{\BPS}(n)$ over itself.
In this way we get a bijection between the analytic space $\mathcal P_g(n)$ and the set of isomorphism classes of marked branched projective structures of degree $n$ and genus $g$.
We thus have the following corollary of proposition \ref{proposition_universal_family_bps_over_bc} :

\begin{theorem}\label{theorem_moduli_space_bps}
Let $n, g \in \N$. The moduli space of isomorphism classes of marked branched projective structures of genus $g$ and degree $n$ is an analytic space $\mathcal P_g(n)$, and is the basis of a universal family $\mathcal U_g^{\BPS}(n)$ of marked branched projective structures of genus $g$ and degree $n$.
Moreover, the fiber of $\mathcal U_g^{\BPS}(n)$ over a branched projective structure $p \in \mathcal P_g(n)$ is $p$ itself.
\end{theorem}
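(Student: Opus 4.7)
The plan is to assemble $\mathcal U_g^{\BPS}(n)$ in four stages by composing universal families: first curves, then divisors on them, then branching classes, and finally branched projective structures. At each stage the universal property of the new family reduces, via pullback, to the universal property of the previous one, so the core verification is that iterating these four relative universal properties yields universality in the absolute sense of Definition \ref{definition_universal_family}, rather than merely universality over the preceding stage.

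More precisely, I would first invoke the classical theorem (see \cite{hubbard}) that the Teichm\"uller family $\mathcal U_g = (\mathcal T_g, \mathcal X_g, \pi_g, m_g)$ is the universal marked family of smooth genus-$g$ curves. The universal family of divisors of degree $n$ over $\mathcal U_g$ is then $\mathcal U_g^{\Div}(n)$, with base $\Sym^n_{\mathcal T_g} \mathcal X_g$ equipped with the pullback marking, as recalled in Subsection \ref{subsection_families_divisors}. Applying Proposition \ref{proposition_universal_family_bc_over_divisor} to $\mathcal U_g^{\Div}(n)$ produces the universal family of branching classes $\mathcal U_g^{\BC}(n)$; then Proposition \ref{proposition_universal_family_bps_over_bc} applied to $\mathcal U_g^{\BC}(n)$ produces $\mathcal U_g^{\BPS}(n)$. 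I define $\mathcal P_g(n)$ to be its base, inheriting the analytic space structure.

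To verify absolute universality, I would take an arbitrary marked family $\mathcal F'$ of branched projective structures of genus $g$ and degree $n$ with base $S'$, and build the classifying morphism $f : S' \to \mathcal P_g(n)$ by four successive unique lifts: the underlying marked family of curves gives $f_0 : S' \to \mathcal T_g$; the relative divisor lifts $f_0$ uniquely into $\Sym^n_{\mathcal T_g} \mathcal X_g$; the relative branching class lifts uniquely into the base of $\mathcal U_g^{\BC}(n)$; and the relative branched projective structure produces the final lift $f$ with $\mathcal F' = f^* \mathcal U_g^{\BPS}(n)$. Uniqueness at each stage composes to uniqueness of $f$. Specializing to $S' = \{x\}$ gives the claimed bijection between the points of $\mathcal P_g(n)$ and isomorphism classes of marked branched projective structures, since a family over a single point is exactly such a structure, two of them are isomorphic iff their classifying maps coincide (a marked isomorphism, if it exists, is unique), and the fiber of $\mathcal U_g^{\BPS}(n)$ over the associated point is by construction the structure itself.

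The main obstacle I anticipate is bookkeeping rather than a serious technical step: I need to ensure that the marking propagates coherently through each pullback, which is precisely why the entire construction must be carried out for \emph{marked} families (unmarked objects with nontrivial automorphisms would obstruct representability of the isomorphism-class functor). The tautological identification of the fiber with $p$ is not an independent argument but is built in at every stage, since the base of each universal family carries a tautological section whose pullback by the identity recovers the universal structure on each fiber.
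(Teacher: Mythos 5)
Your proposal is correct and follows essentially the same route as the paper: the paper also builds $\mathcal U_g^{\BPS}(n)$ by stacking the universal family of divisors over $\mathcal U_g$, then Proposition \ref{proposition_universal_family_bc_over_divisor}, then Proposition \ref{proposition_universal_family_bps_over_bc}, and deduces the bijection with isomorphism classes by specializing the universal property to one-point bases, using that marked objects admit at most one isomorphism. Your explicit four-stage lifting argument merely spells out what the paper dismisses with ``clearly,'' so there is nothing substantive to add.
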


Let us now give a proof for proposition \ref{proposition_universal_family_bps_over_bc}

\subsection{Schwarzian Parametrization in Family}\label{subsection_schwarzian_parametrization_in_family}

Let $g, n \in \N$.
Fix a family of branching classes $\mathcal G = (T, \mathcal X, \pi, \mathcal D, \mathfrak a)$ of genus $g$ and degree $n$.

\begin{lemma}\label{lemma_relative_bps_schwarzian_derivative}
Suppose given $\mathfrak p_0$ a branched projective structure on $\mathcal X$ relative to $T$ and of relative branching class $\mathfrak a$.
\begin{itemize}
\item[(i)] If $\mathfrak p$ is a branched projective structure on $\mathcal X$ relative to $T$ and of relative branching class $\mathfrak a$, then the difference $\mathfrak p-\mathfrak p_0$ is a holomorphic section of the vector bundle $K_{\mathcal X / T}^{\otimes 2}(-\mathcal D)$ over $\mathcal X$ : $\mathfrak p-\mathfrak p_0 \in H^0\left(\mathcal X, K_{\mathcal X/T}^{\otimes 2}(-\mathcal D)\right)$.
\item[(ii)] If $q \in H^0\left(\mathcal X, K_{\mathcal X/S}^{\otimes 2}(-\mathcal D)\right)$, then there exists a projective structure $\mathfrak p$ on $\mathcal X$ relative to $T$ and of relative branching class $\mathfrak a$ such that $\mathfrak p - \mathfrak p_0 = q$.
\end{itemize}
\end{lemma}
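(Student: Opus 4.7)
The plan is to carry out the Schwarzian derivative computation of proposition \ref{proposition_space_bps_given_bc} in the relative setting, using local product coordinates on the family.

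For (i), I would work locally on $\mathcal X$. Since $\pi$ is a submersion, cover $\mathcal X$ by open sets $U_\alpha$ carrying coordinates $(z_\alpha, w_\alpha)$ with $\pi(z_\alpha, w_\alpha) = z_\alpha$, on each of which $\mathfrak p$ and $\mathfrak p_0$ admit branched projective charts $f_\alpha, f_{0,\alpha}$. Define
\[
q_\alpha := \bigl(\{f_\alpha, w_\alpha\} - \{f_{0,\alpha}, w_\alpha\}\bigr)\, dw_\alpha^{\otimes 2},
\]
where the Schwarzian is taken in the fiber variable $w_\alpha$ only. Fiberwise, the proof of proposition \ref{proposition_space_bps_given_bc} shows that the polar parts of the two Schwarzians at each branch point cancel and that the resulting holomorphic quadratic differential on $X_t$ vanishes to order at least $n_x$ at each $x \in D_t$. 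To promote this to a relative statement I would check three things: (a) holomorphicity of $q_\alpha$ in $(z_\alpha, w_\alpha)$ on $U_\alpha$, which holds in $w_\alpha$ by the fiberwise analysis and in $z_\alpha$ by the holomorphy of $f_\alpha, f_{0,\alpha}$; (b) independence from the choice of chart within each structure: the transitions in a relative branched projective atlas are of the form $f_\alpha = g_{\alpha\beta} \circ f_\beta$ with $g_{\alpha\beta} \in \PSL(2, \C)$ depending only on the base coordinate, hence $\{g_{\alpha\beta} \circ f_\beta, w_\beta\} = \{f_\beta, w_\beta\}$ by Möbius invariance of the Schwarzian in the fiber variable; (c) compatibility under a change of fiber coordinate $w'_\alpha = h(z_\alpha, w_\alpha)$, from the cocycle identity $\{f, w\}\, dw^{\otimes 2} = \{f, w'\}\,(dw')^{\otimes 2} + \{w', w\}\, dw^{\otimes 2}$, whose extra $\{w', w\}$ term drops out of the difference. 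Gluing the $q_\alpha$ produces a section of $K_{\mathcal X/T}^{\otimes 2}$, and the fiberwise vanishing of order $n_x$ along the components of $\mathcal D$ upgrades to divisibility by the local equation of $\mathcal D$ on the total space via a Weierstrass-type Taylor expansion in the fiber variable.

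For (ii), part (i) shows that the sheaf $\mathfrak P$ on $\mathcal X$ of local relative branched projective structures with branching divisor $\mathcal D$ and relative branching class $\mathfrak a$ is an affine sheaf directed by $K_{\mathcal X/T}^{\otimes 2}(-\mathcal D)$. Given $q$ global, I want to show that locally any section of the directing sheaf is realized as a difference $\mathfrak p - \mathfrak p_0$; once this is granted, the local pieces $\mathfrak p_0|_U + q|_U$ agree on overlaps because their differences equal the fixed global $q$, and glue to the desired global $\mathfrak p$. The local realization is Fuchs local theory with parameter: the function $\psi(z, w)$ reading off the coefficient of $\{f_0, w\}\,dw^{\otimes 2} + q$ is holomorphic in $(z,w)$, and solving the ODE $\partial_w^2 y + \tfrac 12 \psi(z, w) y = 0$ in $w$ with $z$ as holomorphic parameter yields a basis $(y_1, y_2)$ depending holomorphically on both variables; the quotient $f = y_1/y_2$ is then a local branched projective chart whose relative Schwarzian differs from that of $f_0$ by $q$. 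The vanishing of $q$ along $\mathcal D$ of fiberwise order $n_x$ forces, by the fiberwise direction of proposition \ref{proposition_space_bps_given_bc}, the branching class of $f$ along $\mathcal D$ to coincide with $\mathfrak a$.

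The main obstacle I anticipate is the careful bookkeeping of how the Schwarzian interacts with the two kinds of transitions in the relative setting: changes of projective chart, whose $\PSL(2, \C)$ coefficient depends holomorphically on the base parameter $z$, and changes of fiber coordinate, which mix $z$ and $w$. Both checks hinge on the fact that the Schwarzian is Möbius-invariant in the fiber variable only, which is precisely what makes $q_\alpha$ a section of the \emph{relative} quadratic differentials and not merely of the absolute ones; once this is pinned down, the rest of the argument is the parametric version of the absolute Schwarzian parametrization already recorded in proposition \ref{proposition_space_bps_given_bc}.
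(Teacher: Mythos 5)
Your proposal follows essentially the same route as the paper: for (i), the fiberwise Schwarzian difference computed in product coordinates $(z,w)$ adapted to the local triviality, with the fiberwise case (proposition \ref{proposition_space_bps_given_bc}) supplying both the cancellation of polar parts and the vanishing along $\mathcal D$; for (ii), local solvability of the Schwarzian equation with holomorphic dependence on the base parameter (the paper invokes Cauchy--Lipschitz for the PDE directly, you route through the associated linear second-order ODE, which is the same Fuchs-theory toolbox already cited in proposition \ref{proposition_space_bps_given_bc}), followed by gluing the local solutions since they all differ from $\mathfrak p_0$ by the same global $q$. Your extra checks (b) and (c) on chart-independence and change of fiber coordinate are left implicit in the paper but are correct and worth recording.
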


\begin{proof}
\begin{itemize}
\item[(i)] Let $(z, w)$ be a local coordinate on $U \subset \mathcal X$ adapted to the locally trivial family on $\mathcal X$, $(T, \mathcal X, \pi)$, i.e. with $z = \widetilde z \circ \pi$, where $\widetilde z$ is a local coordinate on $T$. Let $\phi_0 : U \to \CP^1$ be a local chart of $\mathfrak p_0$ on $U$ and $\phi : U \to \CP^1$ a local chart of $\mathfrak p$ on $U$.
The difference $\mathfrak p - \mathfrak p_0$ is the meromorphic relative quadratic differential 
$$q = \left(\left(\partial_w(\partial^2_w \phi / \partial_w \phi) - 1/2 (\partial^2_w \phi / \partial_w \phi)^2\right) - \left(\partial_w(\partial^2_w \phi_0 / \partial_w \phi_0) - 1/2 (\partial^2_w \phi_0 / \partial_w \phi_0)^2\right)\right)dw^{\otimes 2}$$
But $\mathfrak p_0$ and $\mathfrak p$, when restricted to the fibers of $\pi$, define branched projective structures with the same branching class, thus for any $t \in T$, $q|_{X_t}$ is a local holomorphic section of $K_{X_t}^{\otimes 2}(-D_t)$. As a consequence, the relative quadratic differential $q$ is a local holomorphic section of $K_{\mathcal X / T}^{\otimes 2}(-\mathcal D)$.
\item[(ii)] Take $x \in \mathcal X$, $U$ an open neighborhood of $x$, $(U, \phi_0)$ a chart of $\mathfrak p_0$, and $(z, w)$ a coordinate on $U$ adapted to the locally trivial family $(T, \mathcal X, \pi)$.
Write $q|_U = udw^{\otimes 2}$. Up to restricting $U$, according to Cauchy-Lipschitz's theorem, there exists $\phi : U \to \CP^1$ a holomorphic solution of the partial differential equation $\partial_w(\partial^2_w \phi / \partial_w \phi) - 1/2 (\partial^2_w \phi / \partial_w \phi)^2 = u + \left(\partial_w(\partial^2_w \phi_0 / \partial_w \phi_0) - 1/2 (\partial^2_w \phi_0 / \partial_w \phi_0)^2\right)$. As a consequence of proposition \ref{proposition_space_bps_given_bc}, $\phi$ defines on $U$ a relative branched projective structure with relative branching class $\mathfrak a$.
By doing so around each point of $\mathcal X$, one gets a branched projective atlas on $\mathcal X$ relative to $S$ of relative branching class $\mathfrak a$.
This atlas defines a relative branched projective structure $\mathfrak p$, and one has $\mathfrak p-\mathfrak p_0 = q$.
\end{itemize}
\end{proof}

For any $t \in T$, write $a_t$ the branching class defined by $\mathfrak a$ on the fiber of $\mathcal G$ over $t$.
Consider
\begin{equation}\label{eq_base_univ_family_bps_over_bc}
\mathcal P_{\mathcal X}^{\mathfrak a} = \left \{(t, p) | t \in T, p \in P_{X_t}^{a_t}\right \}
\end{equation}
the union of the spaces of branched projective structures on the fibers of $\mathcal G$.
There is an obvious map
\begin{equation}\label{eq_proj_bps_bc}
\alpha : \mathcal P_{\mathcal X}^{\mathfrak a} \to T
\end{equation}

According to proposition \ref{proposition_space_bps_given_bc}, for any $t \in T$, the fiber $\alpha^{-1}(t)$ is either empty or an affine space directed by $H^0\left(X_t, K_{X_t}^{\otimes 2}(-D_t)\right)$, according to whether the cohomology class $c(a_t) \in H^1\left(X_t, K_{X_t}^{\otimes 2}(-D_t)\right)$ vanishes or not.
Thus studying the structure of $\mathcal P_{\mathcal X}^{\mathfrak a}$ is essentially studying the variation with $t$ of the space $H^0\left(X_t, K_{X_t}^{\otimes 2}(-D_t)\right)$ and the cohomology class $c(a_t)$

Write $\mathfrak{BR}_{\mathcal X}^{\mathfrak a}$ the sheaf on $\mathcal X$ of branched projective structures on $\mathcal X$ relative to $T$ of relative branching class $\mathfrak a$ : the space of sections $\Gamma\left(U, \mathfrak{BR}_{\mathcal X}^{\mathfrak a}\right)$ is the set of branched projective structures on $U$ relative to $\pi(U)$ of relative branching class $\mathfrak a|_{\pi(U)}$.
Lemma \ref{lemma_relative_bps_schwarzian_derivative} implies that $\mathfrak{BR}_{\mathcal X}^{\mathfrak a}$ is a sheaf of affine spaces directed by the line bundle $K_{\mathcal X/T}^{\otimes 2}(-\mathcal D)$.
Moreover $\mathfrak{BR}_{\mathcal X}^{\mathfrak a}$ is clearly locally nonempty.
In particular, it comes with a cohomology class $\mathcal C(\mathfrak a) \in H^1\left(\mathcal X, K_{\mathcal X/T}^{\otimes 2}(-\mathcal D)\right)$ with $\mathcal C(\mathfrak a) = 0$ if and only if $H^0(\mathcal X, \mathfrak{BR}_{\mathcal X}^{\mathfrak a}) \neq \emptyset$, i.e. if and only if there exists a branched projective structure on $\mathcal X$, relative to $T$ of relative branching class $\mathfrak a$.

Leray spectral sequence gives an exact sequence (see \cite{voisin}) :
\begin{equation}\label{eq_leray_exact_sequence}
H^1\left(T, \pi_*K_{\mathcal X/T}^{\otimes 2}(-\mathcal D)\right) \to H^1\left(\mathcal X, K_{\mathcal X/T}^{\otimes 2}(-\mathcal D)\right) \xrightarrow{\chi_{\mathcal G}} H^0\left(T, R^1\pi_*K_{\mathcal X/T}^{\otimes 2}(-\mathcal D)\right)
\end{equation}
Write 
\begin{equation}\label{eq_section_obstruction}
\mathfrak c(\mathfrak a) = \chi_{\mathcal G}(\mathcal C(\mathfrak a))
\end{equation}

Recall Riemann-Roch formula for $K_{X_t}^{\otimes 2}(-D_t)$ :
\begin{equation}
h^0\left(X_t, K_{X_t}^{\otimes 2}(-D_t)\right) - h^1\left(X_t, K_{X_t}^{\otimes 2}(-D_t)\right) = 3g-3-n
\end{equation}

Let us finish this subsection by showing proposition \ref{proposition_universal_family_bps_over_bc} in the special cases where the branching degree $n$ is either small enough or large enough when compared to the genus $g$.

Suppose $h^0\left(X, K_{X_t}^{\otimes 2}(-D_t)\right)$, thus $h^1\left(X, K_{X_t}^{\otimes 2}(-D_t)\right)$, does not depend on $t \in T$.
Then the sheaves $\pi_*K_{\mathcal X/T}^{\otimes 2}(-\mathcal D)$ and $R^1\pi_*K_{\mathcal X/T}^{\otimes 2}(-\mathcal D)$ are vector bundles, and the section $\mathfrak c(\mathfrak a)$ in \eqref{eq_section_obstruction} satisfies for any $t \in T$ : $\mathfrak c(\mathfrak a)(s) = c\left(a_t\right)$.

Denote by $Z \subset T$ the zero locus of the section $\mathfrak c(\mathfrak a)$, which is an analytic subspace.
Let $\iota : Z \to T$ be the holomorphic immersion.
In other words, there exists a branched projective structure of divisor $D_t$ and class $a_t$ on $X_t$ if and only if $t \in Z$.
Let $\mathcal G' = \left(Z, \iota^* \mathcal X, \iota^* \pi, \iota^* \mathcal D, \iota^* \mathfrak a\right)$.
The space $Z$ is exactly the image of the map $\alpha$ in \eqref{eq_proj_bps_bc} and the space $\mathcal P_{\mathcal X}^{\mathfrak a}$ in \eqref{eq_base_univ_family_bps_over_bc} satisfies
\begin{equation}
\mathcal P_{\mathcal X}^{\mathfrak a} = \left \{(s, p)|s \in Z, p \in P_{X_t}^{a_t}\right \} = \mathcal P_{\iota^* \mathcal X}^{\iota^* \mathfrak a}
\end{equation}

The map $\chi_{\mathcal G'}$, defined as in \eqref{eq_leray_exact_sequence}, is the zero map.
Moreover, if $Z$ is Stein, the exact sequence \eqref{eq_leray_exact_sequence} applied to the family $\mathcal G'$ has vanishing first term, which implies that the second term also vanishes.
As a consequence, up to restricting $Z$ to an open subset, there are branched projective structures on $\iota^* \mathcal X$ relative to $Z$ and of branching class $\iota^* \mathfrak a$ (such structures do not necessarily exist globally on $Z$, see \cite{zhao}).

By lemma \ref{lemma_relative_bps_schwarzian_derivative}, the map $\alpha : \mathcal P_{\mathcal X}^{\mathfrak a} \twoheadrightarrow Z$ is an affine bundle directed by the vector bundle\break $\left( \iota^* \pi\right)_* K_{\iota^*\mathcal X/Z_{\mathcal X}^{\iota^* \mathfrak a}}^{\otimes 2}\left(-\iota^*\mathcal D\right) = \left. \pi_* K_{\mathcal X/T}^{\otimes 2}(-\mathcal D) \right |_Z$, and whose local holomorphic sections are given by the (local on $Z$) branched projective structures on $\iota^* \mathcal X$ relative to $Z$ and with branching class $\iota^* \mathfrak a$.
In particular, the map $\alpha$ is holomorphic.

The pullback family of branching classes $\alpha^*\mathcal G = \left(\mathcal P_{\mathcal X}^{\mathfrak a}, \alpha^*\mathcal X, \alpha^* \pi, \alpha^* \mathcal D, \alpha^* \mathfrak a\right)$ comes with a tautological relative branched projective structure $\mathfrak p_0$~: if $(t, p) \in \mathcal P_{\mathcal X}^{\mathfrak a}$, the branched projective structure defined by $\mathfrak p_0$ on $\left(\alpha^*\pi\right)^{-1}(t, p) = X_t$ with branching class $a_t$ is $p$.

The discussion around lemma \ref{lemma_univ_family_bc} applies \emph{mutatis mutandis}, to show that the family\break $\left(\mathcal P_{\mathcal X}^{\mathfrak a}, \alpha^*\mathcal X, \alpha^* \pi, \alpha^* \mathcal D, \alpha^* \mathfrak a, \mathfrak p_0\right)$, along with the morphism $\alpha : \mathcal P_{\mathcal X}^{\mathfrak a} \to T$ is a universal family of branched projective structures over $\mathcal G$.

If $n < 2g-2$, by Serre duality, for any $t \in T$, $h^1\left(X_t, K_{X_t}^{\otimes 2}\left(-D_t\right)\right) = h^0\left(X_t, T_{X_t}\left(D_t\right)\right) = 0$, because $\deg\left(T_{X_t}\left(D_t\right)\right) < 0$.
In particular $h^1\left(X_t, K_{X_t}^{\otimes 2}\left(-D_t\right)\right)$ and $h^0\left(X_t, K_{X_t}^{\otimes 2}\left(-D_t\right)\right)$ do not depend on $t \in T$.
In that case the section $c(\mathfrak a)$ takes values in the zero vector bundle, thus $Z = T$ and $\mathcal P_{\mathcal X}^{\mathfrak a}$ is an affine bundle over $T$.

On the other extreme, if $n > 4g-4$, then $\deg\left(K_{X_t}^{\otimes 2}\left(-D_t\right)\right) < 0$, so $h^0\left(X_t, K_{X_t}^{\otimes 2}\left(-D_t\right)\right) = 0$.
Again $h^0\left(X_t, K_{X_t}^{\otimes 2}\left(-D_t\right)\right)$ and $h^1\left(X_t, K_{X_t}^{\otimes 2}\left(-D_t\right)\right)$ are both independant of $t \in T$.
Moreover the fibers of $\alpha : \mathcal P_{\mathcal X}^{\mathfrak a} \to T$ contain at most one point.
In that case $\mathcal P_{\mathcal X}^{\mathfrak a}$ is an analytic subspace of $T$.

One has the following proposition, which is a particular case of proposition \ref{proposition_universal_family_bps_over_bc} :

\begin{proposition}\label{proposition_univ_family_bps_low_high_deg}
Recall $n, g \in \N$, $\mathcal G = (T, \mathcal X, \mathcal D, \pi, \mathfrak a)$ is a family of complex curves of genus $g$ with divisors of degree $n$ and branching classes and $\mathcal P_{\mathcal X}^{\mathfrak a}$ is the set of branched projective structures on the fibers of $\mathcal G$.
\begin{itemize}
\item[(i)] If $n < 2g-2$, the space $\mathcal P_{\mathcal X}^{\mathfrak a}$ defined in \eqref{eq_base_univ_family_bps_over_bc} is an affine bundle on $T$ directed by the vector bundle $\pi_*\left(K_{\mathcal X/T}^{\otimes 2}(-\mathcal D)\right)$.
\item[(ii)] If $n > 4g-4$, the space $\mathcal P_{\mathcal X}^{\mathfrak a}$ is an analytic subspace of $T$.
\end{itemize}
In both cases, $\mathcal P_{\mathcal X}^{\mathfrak a}$ is the basis of a universal family of branched projective structures over $\mathcal G$.
\end{proposition}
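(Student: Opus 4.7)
The plan is to derive the proposition directly from Lemma \ref{lemma_relative_bps_schwarzian_derivative} and the Leray exact sequence \eqref{eq_leray_exact_sequence}, reducing the proof to two Riemann--Roch / Serre duality computations that ensure the direct image sheaves behave well. First I verify that in both regimes $h^0(X_t, K_{X_t}^{\otimes 2}(-D_t))$ and $h^1(X_t, K_{X_t}^{\otimes 2}(-D_t))$ are independent of $t \in T$, so that by Grauert's base change theorem both $\pi_* K_{\mathcal X/T}^{\otimes 2}(-\mathcal D)$ and $R^1 \pi_* K_{\mathcal X/T}^{\otimes 2}(-\mathcal D)$ are holomorphic vector bundles on $T$ with the expected fiberwise cohomology.

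For (i), when $n < 2g-2$, Serre duality identifies $H^1(X_t, K_{X_t}^{\otimes 2}(-D_t))$ with $H^0(X_t, T_{X_t}(D_t))$, which vanishes since $\deg T_{X_t}(D_t) = 2-2g+n < 0$. Hence $R^1 \pi_* K_{\mathcal X/T}^{\otimes 2}(-\mathcal D) = 0$, the target of $\chi_{\mathcal G}$ vanishes, and the obstruction section $\mathfrak c(\mathfrak a)$ is trivially zero. Over any Stein open $V \subset T$, the Leray exact sequence has vanishing last term, so $\mathcal C(\mathfrak a)|_V$ lies in the image of $H^1(V, \pi_* K_{\mathcal X/T}^{\otimes 2}(-\mathcal D)|_V) = 0$, which gives local relative branched projective structures on $\mathcal X$. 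By Lemma \ref{lemma_relative_bps_schwarzian_derivative}, two such local structures differ by a section of $\pi_* K_{\mathcal X/T}^{\otimes 2}(-\mathcal D)$, which endows $\alpha : \mathcal P_{\mathcal X}^{\mathfrak a} \to T$ with the structure of an affine bundle directed by this vector bundle.

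For (ii), when $n > 4g-4$, the bundle $K_{X_t}^{\otimes 2}(-D_t)$ has negative degree $4g-4-n$, so $h^0 = 0$ for every $t$, and thus $\pi_* K_{\mathcal X/T}^{\otimes 2}(-\mathcal D) = 0$. The fibers of $\alpha$ are therefore empty or singletons. Moreover, the first term $H^1(T, \pi_* K_{\mathcal X/T}^{\otimes 2}(-\mathcal D))$ of \eqref{eq_leray_exact_sequence} vanishes, so $\chi_{\mathcal G}$ is injective and $\mathcal C(\mathfrak a) = 0$ iff the holomorphic section $\mathfrak c(\mathfrak a)$ of the vector bundle $R^1\pi_* K_{\mathcal X/T}^{\otimes 2}(-\mathcal D)$ vanishes. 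The same analysis applied to the restriction of $\mathcal G$ to any open subset of $T$ shows that $\mathcal P_{\mathcal X}^{\mathfrak a}$ coincides set-theoretically with the zero locus $Z$ of $\mathfrak c(\mathfrak a)$, which is a closed analytic subspace of $T$.

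For universality in both regimes, I endow $\alpha^*\mathcal X$ with a tautological relative branched projective structure $\mathfrak p_0$ (constructed locally on Stein open subsets of the base exactly as in the discussion preceding the statement) and repeat, \emph{mutatis mutandis}, the argument of Lemma \ref{lemma_univ_family_bc}: a family $\mathcal F' = (S', \mathcal X', \pi', \mathcal D', \mathfrak a', \mathfrak p')$ over $\mathcal G$ defines an obvious set-theoretic map $g : S' \to \mathcal P_{\mathcal X}^{\mathfrak a}$ sending $s'$ to $(f'(s'), p_{s'})$ with $p_{s'}$ the fiber of $\mathfrak p'$ at $s'$; viewing $\mathfrak p'$ as a holomorphic section of the pullback affine (respectively embedded) bundle $\mathcal P_{\mathcal X'}^{\mathfrak a'} = S' \times_T \mathcal P_{\mathcal X}^{\mathfrak a} \to S'$ and composing with the projection to the second factor exhibits $g$ as holomorphic. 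The main technical point is the bookkeeping around the Leray exact sequence in case (ii), where one must check that injectivity of $\chi_{\mathcal G}$ is enough to identify the scheme-theoretic zero locus of $\mathfrak c(\mathfrak a)$ with the image of $\alpha$; but the vanishing of $\pi_* K_{\mathcal X/T}^{\otimes 2}(-\mathcal D)$ makes this immediate.
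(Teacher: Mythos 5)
Your proof is correct and follows essentially the same route as the paper: constancy of $h^0$ and $h^1$ of $K_{X_t}^{\otimes 2}(-D_t)$ in each regime, Grauert's theorem to get the direct images as vector bundles, the Leray sequence \eqref{eq_leray_exact_sequence} together with Cartan B on Stein opens to produce local relative structures (case (i)) or to realize $\mathcal P_{\mathcal X}^{\mathfrak a}$ as the zero locus of the obstruction section (case (ii)), Lemma \ref{lemma_relative_bps_schwarzian_derivative} for the affine-bundle structure, and the tautological structure plus the argument of Lemma \ref{lemma_univ_family_bc} for universality. The only point you leave slightly implicit --- that the value of the section $\mathfrak c(\mathfrak a)$ at $t$ is the fiberwise obstruction $c(a_t)$, which is what identifies the zero locus with the image of $\alpha$ --- is asserted at the same level of detail in the paper's own proof.
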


\begin{remark}
In the limit case $n = 2g-2$, the set $T_0$ of points $t \in T$ for which $h^1(X_t, K_{X_t}^{\otimes 2}) \neq 0$ is the set of points for which $D_t$ is a canonical divisor of $X_t$. If $T' = T \backslash T_0$, $\alpha^{-1}(T') \subset \mathcal P_{\mathcal X}^{\mathfrak a}|_{T}$ is an affine bundle over $T'$.
It is striking that according to \cite{francaviglia-ruffoni}, the branching divisor being canonical for the underlying complex curve is necessary for a branched projective structure to be a critical point of the holomorphic map $\mathcal M_{\lambda, \rho} \to \mathcal T_g$. Here $\mathcal T_g$ is the Teichmüller space for genus $g$, $\lambda$ is a partition of $k \le 2g-2$, $\rho$ is a non-elementary representation of $\pi_1(R)$ in $\PGL(2, \mathbb C)$ and $M_{\lambda, \rho}$ is the complex manifold of branched projective structures on a curve of genus $g$ with monodromy $\rho$ and whose branching points follow the partition $\lambda$.
Moreover, it is proved in \cite{francaviglia-ruffoni} that the condition is sufficient if the underlying complex curve is hyperelliptic and the divisor is reduced.
\end{remark}

\subsection{Branched Projective Structures Restricted to a Divisor}

To prove proposition \ref{proposition_universal_family_bps_over_bc} in its full generality, we need to introduce a technical notion of branched projective structure \emph{restricted} to a divisor.
It can be seen as a notion of jet of projective structure.
It can also be seen as a way to precise the information contained in a branching class in order to get an analog of proposition \ref{proposition_space_bps_given_bc} without the ambiguity in point (ii).

Let $X$ be a Riemann surface, along with an effective divisor $D$ and a branching class $a$ over $D$.
Let $D'$ be an effective divisor on $X$.

\begin{definition}
Let $U$ and $U'$ be two open neighborhoods of $D'$ in $X$. Write $V = U \cap U'$.
Let $p_1$, $p_2$ be branched projective structures, defined respectively on $U$ and $U'$, of divisor $D|_U$ (respectively $D|_{U'}$) and branching class $a|_U$ (respectively $a|_{U'}$). The projective structures $p_1$ and $p_2$ are said to \emph{coincide over the divisor $D'$} if the difference $p_2|_V - p_1|_V \in H^0\left(V, K_V^{\otimes 2}(-D|_V)\right)$ vanishes over the divisor $D'$, i.e. $p_2|_V-p_1|_V \in H^0\left(V, K_X^{\otimes 2}\left(-D|_V-D'|_V\right)\right)$.

A \emph{branched projective structure restricted to $D'$} of divisor $D$ and branching class $a$ is an equivalence class for the above equivalence relation.
We denote by $P_{X, D'}^a$ the set of branched projective structures of divisor $D$ and branching class $a$ restricted to $D'$.
\end{definition}

Any local holomorphic map defined around the points of $D'$ with branching class $a$ determines an element of $P_{X, D'}^a$.
Proposition \ref{proposition_space_bps_given_bc} implies the following~: 

\begin{proposition}\label{prop_struct_space_jets_bps}
Let $D$ be an effective divisor on a Riemann surface $X$, let $a \in A_X^D$ and $D'$ an effective divisor on $X$.
The space $P_{X, D'}^a$ is an affine space directed by $H^0\left(X, K_X^{\otimes 2}(-D)|_{D'}\right)$.
\end{proposition}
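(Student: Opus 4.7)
The plan is to mimic the proof of Proposition \ref{proposition_space_bps_given_bc}, exploiting the fact that $K_X^{\otimes 2}(-D)|_{D'}$ is a coherent sheaf supported on the discrete set $\mathrm{supp}(D')$, so that everything reduces to local data around these points.

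First I would establish nonemptiness of $P_{X, D'}^a$. Choose pairwise disjoint open discs $U_x$ around each $x \in \mathrm{supp}(D')$. On each $U_x$, Fuchs's local theory --- exactly as invoked in the proof of Proposition \ref{proposition_space_bps_given_bc} to show that the sheaf $\mathfrak{P}_X^a$ is locally nonempty --- produces a branched projective structure of divisor $D|_{U_x}$ and branching class $a|_{U_x}$. Gluing over the disjoint union $U = \bigsqcup_x U_x$ yields a representative in $P_{X, D'}^a$.

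Next I would construct the affine torsor structure. Given two classes represented by $p_1$ on $U_1$ and $p_2$ on $U_2$, both neighborhoods of $\mathrm{supp}(D')$, Proposition \ref{proposition_space_bps_given_bc} gives $p_2|_V - p_1|_V \in H^0(V, K_V^{\otimes 2}(-D|_V))$ with $V = U_1 \cap U_2$. Its image under the restriction map to $H^0(X, K_X^{\otimes 2}(-D)|_{D'})$ is well-defined since the target is a skyscraper-type sheaf concentrated on $\mathrm{supp}(D') \subset V$, and it depends only on the classes of $p_1, p_2$ by the very definition of the equivalence relation. The cocycle condition is then formal, and the difference vanishes precisely when the two classes coincide.

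The remaining point is surjectivity of this difference map. Given $\overline\omega \in H^0(X, K_X^{\otimes 2}(-D)|_{D'})$, at each $x \in \mathrm{supp}(D')$ the germ $\overline\omega_x$ is a jet of quadratic differential to order $n'_x$, which I can lift to a genuine holomorphic quadratic differential $\omega_x$ on a small disc $U_x$ vanishing on $D|_{U_x}$ (take any polynomial representative in a local coordinate, times $dz^{\otimes 2}$, times a local equation of $D$). Choosing the $U_x$ disjoint, these assemble to $\omega \in H^0(U, K_U^{\otimes 2}(-D|_U))$ on the neighborhood $U = \bigsqcup_x U_x$ of $\mathrm{supp}(D')$, whose class in the jet space recovers $\overline\omega$. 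By Proposition \ref{proposition_space_bps_given_bc} applied on $U$, $p_1 + \omega$ is a branched projective structure of class $a|_U$ defined on $U$, and its equivalence class in $P_{X,D'}^a$ differs from $[p_1]$ by exactly $\overline\omega$.

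I do not expect any serious obstacle: no cohomological obstruction appears because the discs $U_x$ can be chosen pairwise disjoint (hence Stein and contractible), so both the local existence step (nonemptiness) and the jet-lifting step (surjectivity) are elementary. The entire content of the proposition thus reduces to Proposition \ref{proposition_space_bps_given_bc} applied on small discs around the support of $D'$, together with the observation that a global section of the coherent sheaf $K_X^{\otimes 2}(-D)|_{D'}$ is the same datum as a compatible family of jets at the points of $\mathrm{supp}(D')$.
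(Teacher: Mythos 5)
Your proposal is correct and follows essentially the same route as the paper, which simply notes that any local holomorphic map around the points of $D'$ with branching class $a$ gives an element of $P_{X,D'}^a$ and then derives the affine structure directly from Proposition \ref{proposition_space_bps_given_bc}; you have merely spelled out the localization to disjoint discs, the well-definedness of the difference map on equivalence classes, and the lifting of jets, all of which are the intended (and unproblematic) details.
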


Now let $r \in P_{X, D'}^a$ be a branched projective structure on $X$ with branching class $a$ restricted to $D'$.
We denote by $\mathcal P_X^{a, r}$ the space of branched projective structures of branching class $a$ whose restriction to $D'$ is $r$.
The following proposition is a clear consequence of proposition \ref{proposition_space_bps_given_bc}.

\begin{proposition}\label{proposition_space_bps_given_rbps}
Let $D$ be an effective divisor on a Riemann surface $X$, $a \in A_X^D$ and $D'$ an effective divisor on $X$.
Let $r \in \mathcal P_{X, D'}^a$ be a restricted branched projective structure.
\begin{itemize}
\item[(i)] There exists an obstruction $\overline c(a, r) \in H^1\left(X, K_X^{\otimes 2}(-D-D')\right)$ such that $\overline c(a, r)=0$ if and only if $P_X^{a, r} \neq \emptyset$.
\item[(ii)] The set $P_X^{a, r}$, if nonempty, is an affine space directed by the vector space $H^0\left(X, K_X^{\otimes 2}(-D-D')\right)$.
\end{itemize}
\end{proposition}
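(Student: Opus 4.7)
The plan is to follow the same strategy as in the proof of proposition \ref{proposition_space_bps_given_bc}, namely to introduce an appropriate sheaf of affine spaces over $X$ and then apply lemma \ref{lemma_obstruction_affine_sheaf}. Specifically, I would define a sheaf $\mathfrak P_X^{a, r}$ on $X$ whose sections over an open set $U \subset X$ consist of branched projective structures on $U$ of divisor $D|_U$ and branching class $a|_U$ whose restriction to the divisor $D'|_U$ equals $r|_U$. The key point is to check that this sheaf is a locally nonempty sheaf of affine spaces directed by $K_X^{\otimes 2}(-D-D')$.

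Local nonemptiness follows from local existence of branched projective structures with prescribed branching class (already used in the proof of proposition \ref{proposition_space_bps_given_bc}), together with the fact that the restriction to $D'$ is by definition an equivalence class of such local structures. More precisely, on a small neighborhood $U$ of a point $x \notin |D'|$, any local branched projective structure with branching class $a|_U$ trivially matches $r|_U$ (the restriction is empty). On a neighborhood $U$ of a point $x \in |D'|$, a representative of the germ of $r$ at $x$ provides a section of $\mathfrak P_X^{a, r}$ over $U$.

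For the affine structure, I would use proposition \ref{proposition_space_bps_given_bc} together with proposition \ref{prop_struct_space_jets_bps}: if $p_1, p_2$ are two sections of $\mathfrak P_X^{a, r}$ over an open $U$, their difference $p_2 - p_1 \in H^0\left(U, K_U^{\otimes 2}(-D|_U)\right)$ by proposition \ref{proposition_space_bps_given_bc}, and the condition that both $p_1$ and $p_2$ restrict to $r|_U$ on $D'|_U$ means precisely that this difference vanishes on $D'|_U$, i.e. lies in $H^0\left(U, K_U^{\otimes 2}(-D|_U-D'|_U)\right)$. Conversely, adding such a quadratic differential to any section of $\mathfrak P_X^{a, r}$ produces another section, by proposition \ref{proposition_space_bps_given_bc}.

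Once these two properties are established, lemma \ref{lemma_obstruction_affine_sheaf} immediately provides the obstruction class $\overline c(a, r) := c\left(\mathfrak P_X^{a, r}\right) \in H^1\left(X, K_X^{\otimes 2}(-D-D')\right)$ whose vanishing is equivalent to the existence of a global section, which is point (i). Point (ii) follows at once by the same argument as in proposition \ref{proposition_space_bps_given_bc}: fix a reference $p_0 \in P_X^{a, r}$, and identify the set $P_X^{a, r}$ with $H^0\left(X, K_X^{\otimes 2}(-D-D')\right)$ via $p \mapsto p - p_0$. The main (and only nontrivial) point to verify carefully is the local comparison of the two notions of difference, namely that the schwarzian difference computed in proposition \ref{proposition_space_bps_given_bc} naturally restricts to the object parametrizing restricted branched projective structures in proposition \ref{prop_struct_space_jets_bps}; this is essentially tautological from the definition of ``coinciding over $D'$'' but deserves to be stated explicitly.
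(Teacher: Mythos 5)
Your proposal is correct and matches the paper's approach: the paper dispenses with the proof by declaring the proposition a direct consequence of proposition \ref{proposition_space_bps_given_bc}, and the sheaf-theoretic argument you spell out (a locally nonempty sheaf of affine spaces directed by $K_X^{\otimes 2}(-D-D')$, followed by lemma \ref{lemma_obstruction_affine_sheaf}) is exactly the intended mechanism, which the paper makes explicit only in the relative setting of the sheaf $\mathcal B \mathcal R_{\mathcal X}^{\mathfrak a, \mathfrak r}$ in subsection \ref{subsection_families_rbps}. Your final remark on comparing the two notions of difference is the right point to isolate, and your verification of it is correct.
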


In particular, if $D'$ is chosen so that $\deg(D) + \deg(D') > 4g-4$, $h^0\left(X, K_X^{\otimes 2}(-D-D')\right) = 0$ thus $P_X^{a, r}$ contains at most one point.

\subsection{Families of Restricted Branched Projective Structures}\label{subsection_families_rbps}

In this subsection we complete the proof of proposition \ref{proposition_universal_family_bps_over_bc}.
By a \emph{family of branching classes with auxiliary divisors of degree $n'$} we mean a family $(T, \mathcal X, \pi, \mathcal D, \mathfrak a, \mathcal D')$ where $(T, \mathcal X, \pi, \mathcal D, \mathfrak a)$ is a family of branching classes and $\mathcal D'$ is an effective divisor on $\mathcal X$ that contains no fiber of $\pi$, such that the restriction of $\mathcal D'$ to any fiber of $\pi$ has degree $n'$.

\begin{definition}
Let $\mathcal G = (T, \mathcal X, \pi, \mathcal D, \mathfrak a, \mathcal D')$ be a family of branching classes with auxiliary divisors.
Let $\mathcal U$ and $\mathcal U'$ be two open neighborhoods of $\mathcal D'$ in $\mathcal X$.
Denote $\mathcal V = \mathcal U \cap \mathcal U'$.
Let $\mathfrak p_1$, $\mathfrak p_2$ be relative branched projective structures, defined respectively on $\mathcal U$ and $\mathcal U'$, of divisor $\mathcal D|_{\mathcal U}$ (respectively $\mathcal D|_{\mathcal U'}$), and of branching class $\mathfrak a|_{\mathcal U}$ (respectively $\mathfrak a|_{\mathcal U'}$).
The projective structures $\mathfrak p_1$ and $\mathfrak p_2$ are said to \emph{coincide over the divisor $\mathcal D'$} if the difference $\mathfrak p_2|_{\mathcal V} - \mathfrak p_1|_{\mathcal V} \in H^0(\mathcal V, K_{\mathcal V/T}^{\otimes 2}(-\mathcal D|_{\mathcal V}))$ vanishes over the divisor $\mathcal D'$, i.e. $\mathfrak p_2|_{\mathcal V}-\mathfrak p_1|_{\mathcal V} \in H^0(\mathcal V, K_{\mathcal V/T}^{\otimes 2}(-\mathcal D|_{\mathcal V}- \mathcal D'))$.

A \emph{branched projective structure on $\mathcal X$, relative to $T$, restricted to $\mathcal D'$} of divisor $\mathcal D$ and branching class $\mathfrak a$ is an equivalence class for the above equivalence relation.
\end{definition}

\begin{definition}\label{definition_families_rbps}
\begin{itemize}
\item[(i)]
Let $n' \in \N$.
A \emph{family of restricted branched projective structures of auxiliary degree $n'$} is given by a family $\mathcal F = (S, \mathcal X, \pi, \mathcal D, \mathfrak a, \mathcal D', \mathfrak r)$ where $(S, \mathcal X, \pi, \mathcal D, \mathfrak a, \mathcal D')$ is a family of branching classes with auxiliary divisors of degree $n'$ and $\mathfrak r$ is a branched projective structure on $\mathcal X$ relative to $S$, restricted to $\mathcal D'$, with branching divisor $\mathcal D$ and branching class $\mathfrak a$.
\item[(ii)]
Let $\mathcal G$ be a family of branching classes with auxiliary divisors of basis $T$.
A \emph{family of restricted branched projective structures over $\mathcal G$} is a family of restricted branched projective structures $\mathcal F$ of basis $S$, along with a morphism $f : S \to T$ such that the underlying family of branching classes with auxiliary divisors is $f^*\mathcal G$.
\item[(iii)]
Let $S$ be an analytic space, $\mathcal F$ a family of restricted branched projective structures over $\mathcal G$ with basis $S$.
The family $\mathcal F$ is said to be \emph{universal} if any family $\mathcal F'$ of restricted branched projective structures over $\mathcal G$ is uniquely obtained as the pullback of $\mathcal F$ by some morphism $g : S' \to S$, where $S'$ is the basis of $\mathcal F'$.
\end{itemize}
\end{definition}

Fix $\mathcal G = (T, \mathcal X, \pi, \mathfrak a)$ a family of branching classes of genus $g$ and degree $n$, as well as an effective divisor $\mathcal D'$ that contains no fiber of $\mathcal G$.
Write $\mathcal H$ the corresponding family of branching classes with auxiliary divisors and $n'$ the auxiliary degree.

For $t \in T$, write respectively $X_t$, $D_t$, $a_t$ and $D'_t$ the curve, branching divisor, branching class and auxiliary divisor in the fiber of $\mathcal H$.
Consider
\begin{equation}
\mathcal P_{\mathcal X, \mathcal D'}^{\mathfrak a} = \left\{(t, r)| t \in T, r \in P_{X_t, D'_t}^{a_t}\right\}
\end{equation}
the union of the spaces of restricted projective structures on the fibers of $\mathcal H$.
There is an obvious map
\begin{equation}
\overline \alpha : \mathcal P_{\mathcal X, \mathcal D'}^{\mathfrak a} \to T
\end{equation}

The following lemma is a direct consequence of lemma \ref{lemma_relative_bps_schwarzian_derivative} :

\begin{lemma}
Let $\mathfrak r_0$ be a branched projective structure on $\mathcal X$, relative to $T$, restricted to $\mathcal D'$, of divisor $\mathcal D$ and relative branching class $\mathfrak a$.
\begin{itemize}
\item[(i)] If $\mathfrak r$ is another such structure, then the difference $\mathfrak r - \mathfrak r_0$ is a holomorphic section of the vector bundle $K_{\mathcal X/T}^{\otimes 2}(-\mathcal D)|_{\mathcal D'}$.
\item[(ii)] If $q \in H^0\left(\mathcal D', K_{\mathcal X/T}^{\otimes 2}(-\mathcal D)|_{\mathcal D'}\right)$ is a section, then there exists $\mathfrak r$ a branched projective structure on $\mathcal X$, relative to $T$, restricted to $\mathcal D'$, of relative branching class $\mathfrak a$, such that $\mathfrak r-\mathfrak r_0 = q$.
\end{itemize}
\end{lemma}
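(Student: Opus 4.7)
The approach is to deduce both statements directly from Lemma \ref{lemma_relative_bps_schwarzian_derivative} by working with unrestricted relative representatives on open neighborhoods of $\mathcal D'$ and then restricting the resulting quadratic differentials to $\mathcal D'$. This restriction step kills the ambiguity inherent to the equivalence relation defining restricted structures: two representatives of the same restricted branched projective structure differ, by the very definition of coinciding over $\mathcal D'$, by a section of $K_{\mathcal X/T}^{\otimes 2}(-\mathcal D-\mathcal D')$, which vanishes once restricted to $\mathcal D'$.

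For (i), I will cover $\mathcal D'$ by opens $\mathcal U_\alpha \subset \mathcal X$ on which both $\mathfrak r$ and $\mathfrak r_0$ admit unrestricted relative representatives $\mathfrak p_\alpha$ and $\mathfrak p_{0,\alpha}$. Lemma \ref{lemma_relative_bps_schwarzian_derivative}(i) supplies $\mathfrak p_\alpha - \mathfrak p_{0,\alpha} \in H^0(\mathcal U_\alpha, K_{\mathcal X/T}^{\otimes 2}(-\mathcal D))$, and restriction to $\mathcal D' \cap \mathcal U_\alpha$ yields a local section of $K_{\mathcal X/T}^{\otimes 2}(-\mathcal D)|_{\mathcal D'}$. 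The observation in the first paragraph shows that this local section is independent of the chosen representatives, so the pieces patch to a well-defined global section $\mathfrak r - \mathfrak r_0$ on $\mathcal D'$.

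For (ii), I fix a representative $\mathfrak p_0$ of $\mathfrak r_0$ on some neighborhood $\mathcal U_0$ of $\mathcal D'$ and cover $\mathcal D'$ by smaller opens $\mathcal V_\alpha \subset \mathcal U_0$ on which $q|_{\mathcal D' \cap \mathcal V_\alpha}$ lifts to a section $\tilde q_\alpha \in H^0(\mathcal V_\alpha, K_{\mathcal X/T}^{\otimes 2}(-\mathcal D))$; such lifts exist because $\mathcal D'$ is Cartier and the restriction map of a locally free sheaf to such a divisor is surjective on stalks. Lemma \ref{lemma_relative_bps_schwarzian_derivative}(ii), whose proof is local in $\mathcal X$, then produces on each $\mathcal V_\alpha$ a relative branched projective structure $\mathfrak p_\alpha$ with $\mathfrak p_\alpha - \mathfrak p_0|_{\mathcal V_\alpha} = \tilde q_\alpha$. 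On overlaps one has $\mathfrak p_\alpha - \mathfrak p_\beta = \tilde q_\alpha - \tilde q_\beta$, which vanishes along $\mathcal D'$, so the restricted classes of the $\mathfrak p_\alpha$ coincide on overlaps and glue to a global restricted structure $\mathfrak r$ satisfying $\mathfrak r - \mathfrak r_0 = q$ by construction. I expect no serious obstacle: the analytic content is entirely carried by Lemma \ref{lemma_relative_bps_schwarzian_derivative}, while the remaining steps are the local surjectivity of restriction to a Cartier divisor and the sheaf-theoretic bookkeeping needed to realize restricted classes as elements of the torsor under $K_{\mathcal X/T}^{\otimes 2}(-\mathcal D)|_{\mathcal D'}$.
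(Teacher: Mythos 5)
Your overall strategy is the one the paper intends: the paper offers no argument beyond declaring the lemma ``a direct consequence of lemma \ref{lemma_relative_bps_schwarzian_derivative}'', and your part (i) correctly supplies the missing bookkeeping (two representatives of the same restricted structure differ by a section of $K_{\mathcal X/T}^{\otimes 2}(-\mathcal D-\mathcal D')$, so the restriction to $\mathcal D'$ of the difference of unrestricted representatives is well defined and global).

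Part (ii), however, has a gap at the final gluing. A restricted branched projective structure, as defined in the paper, is an equivalence class of genuine relative structures each defined on a \emph{single} open neighborhood of $\mathcal D'$; it is not a compatible family of local classes. Your $\mathfrak p_\alpha$ agree on overlaps only up to sections of $K_{\mathcal X/T}^{\otimes 2}(-\mathcal D-\mathcal D')$, so to assemble them into one representative you must correct each $\mathfrak p_\alpha$ by such a section, and the obstruction to doing so is the class of the \v{C}ech cocycle $\left(\tilde q_\alpha - \tilde q_\beta\right)$ in $H^1\bigl(\bigcup_\alpha \mathcal V_\alpha, K_{\mathcal X/T}^{\otimes 2}(-\mathcal D-\mathcal D')\bigr)$, which is exactly the image of $q$ under the connecting map of $0 \to K_{\mathcal X/T}^{\otimes 2}(-\mathcal D-\mathcal D') \to K_{\mathcal X/T}^{\otimes 2}(-\mathcal D) \to K_{\mathcal X/T}^{\otimes 2}(-\mathcal D)|_{\mathcal D'} \to 0$. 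In other words, your gluing step is equivalent to lifting $q$ to a section of $K_{\mathcal X/T}^{\otimes 2}(-\mathcal D)$ on an entire neighborhood of $\mathcal D'$, which is the only nontrivial content of (ii) and does not follow from surjectivity on stalks: unlike the fibrewise case, where $D'$ is a finite set of points and disjoint disks make the question disappear, $\mathcal D'$ is a hypersurface in $\mathcal X$ and cannot be separated into pieces meeting only one chart. The gap is fixable: $\mathcal D' \to T$ is a finite morphism, so over a Stein open subset of $T$ the divisor $\mathcal D'$ is Stein and admits a basis of Stein neighborhoods in $\mathcal X$ (Siu), on which the relevant $H^1$ vanishes and the lift exists; this proves the statement locally over $T$, which is all the paper subsequently uses (the affine-bundle structure on $\mathcal P_{\mathcal X, \mathcal D'}^{\mathfrak a}$ only requires local sections). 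You should either add such an argument or note that (ii) is to be understood locally over the base.
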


The dimension of $H^0\left(D'_t, K_{X_t}^{\otimes 2}\left(-D_t\right)|_{D'_t}\right)$ does not depend on $t \in T$.
Thus $\pi_*\left(K_{\mathcal X/T}^{\otimes 2}(-\mathcal D)|_{\mathcal D'}\right)$ is a vector bundle on $T$, and the datum of a section in $H^0\left(\mathcal D', K_{\mathcal X/T}^{\otimes 2}(-\mathcal D)|_{\mathcal D'}\right)$ is the same as the datum of a section in $H^0\left(T, \pi_*\left(K_{\mathcal X/T}^{\otimes 2}(-\mathcal D)|_{\mathcal D'}\right) \right)$.
As a consequence, $\mathcal P_{\mathcal X, \mathcal D'}^{\mathfrak a}$ has the structure of an affine bundle over $T$, whose holomorphic (local) sections are the ones defined by (local over $T$) families of branched projective structures on $\mathcal G$ restricted to $\mathcal D'$.

The pullback family of branching classes $\overline \alpha^*\mathcal G = \left(\mathcal P_{\mathcal X, \mathcal D'}^{\mathfrak a}, \overline \alpha^*\mathcal X, \overline \alpha^* \pi, \overline \alpha^* \mathcal D, \overline \alpha^* \mathfrak a\right)$ comes with a tautological relative branched projective structure $\mathfrak r_0$ restricted to $\overline \alpha^* \mathcal D'$~: if $(t, r) \in \mathcal P_{\mathcal X, \mathcal D'}^{\mathfrak a}$, the branched projective structure defined by $\mathfrak r_0$ on $\left(\overline \alpha^*\pi\right)^{-1}(t, r) = X_t$ with branching class $a_t$ is $r$.

The discussion around lemma \ref{lemma_univ_family_bc} applies \emph{mutatis mutandis}, to show that the family\break $\left(\mathcal P_{\mathcal X, \mathcal D'}^{\mathfrak a}, \overline \alpha^*\mathcal X, \overline \alpha^* \pi, \overline \alpha^* \mathcal D, \overline \alpha^* \mathfrak a, \overline \alpha^* \mathcal D', \mathfrak r_0\right)$, along with the morphism $\overline \alpha : \mathcal P_{\mathcal X}^{\mathfrak a} \to T$ is a universal family of branched projective structures over $\mathcal H$.
We thus have the following lemma :

\begin{lemma}\label{lemma_universal_family_rbps_over_bc}
If $\mathcal G$ is a family of branching classes with auxiliary divisors, there exists a universal family of restricted branched projective structures over $\mathcal G$.
\end{lemma}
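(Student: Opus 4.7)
The plan is to construct $\mathcal P_{\mathcal X, \mathcal D'}^{\mathfrak a}$ explicitly as the total space of a naturally defined affine bundle over $T$, equip the pullback family over it with a tautological relative restricted branched projective structure, and then verify the universal property by running the argument of lemma \ref{lemma_univ_family_bc} in this new setting. Essentially everything that is needed has already been prepared in the paragraphs preceding the statement; the proof just consists in assembling these pieces.

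First I would form the set $\mathcal P_{\mathcal X, \mathcal D'}^{\mathfrak a} = \{(t,r) \mid t \in T,\ r \in P_{X_t, D'_t}^{a_t}\}$ with the tautological projection $\overline\alpha$ to $T$. The preceding lemma (the restricted analogue of lemma \ref{lemma_relative_bps_schwarzian_derivative}) tells us that any two relative restricted branched projective structures defined on a common open set differ by a section of $K_{\mathcal X/T}^{\otimes 2}(-\mathcal D)|_{\mathcal D'}$, and that every section arises locally this way. Because $\mathcal D'$ is finite over $T$, the pushforward $\pi_*\bigl(K_{\mathcal X/T}^{\otimes 2}(-\mathcal D)|_{\mathcal D'}\bigr)$ is a genuine vector bundle on $T$, so sections of it over an open $V \subset T$ are the same as sections of $K_{\mathcal X/T}^{\otimes 2}(-\mathcal D)|_{\mathcal D'}$ over $\pi^{-1}(V)\cap\mathcal D'$. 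This endows $\mathcal P_{\mathcal X, \mathcal D'}^{\mathfrak a}$ with the structure of an affine bundle over $T$, whose local holomorphic sections are, by construction, precisely those determined by locally defined relative restricted branched projective structures on $\mathcal G$.

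Next I would define the tautological relative restricted branched projective structure $\mathfrak r_0$ on $\overline\alpha^*\mathcal H$, by declaring that $\mathfrak r_0$ restricts on the fiber over $(t,r)$ to $r$ itself. To see that $\mathfrak r_0$ is actually a holomorphic relative restricted structure, one observes that the space of relative restricted structures on the pulled-back family is the analytic fiber product $\mathcal P_{\mathcal X, \mathcal D'}^{\mathfrak a} \times_T \mathcal P_{\mathcal X, \mathcal D'}^{\mathfrak a}$, and $\mathfrak r_0$ corresponds to the diagonal section, which is holomorphic. This is the exact analogue of the verification performed for $\mathfrak a_0$ just before lemma \ref{lemma_univ_family_bc}.

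Finally I would prove universality. Given another family $\mathcal F' = (S', \mathcal X', \pi', \mathcal D_1', \mathfrak a', \mathcal D_2', \mathfrak r')$ of restricted branched projective structures over $\mathcal G$, with structural morphism $f' : S' \to T$, I define a map $g : S' \to \mathcal P_{\mathcal X, \mathcal D'}^{\mathfrak a}$ by $g(s') = (f'(s'), r'_{s'})$, where $r'_{s'}$ is the restricted branched projective structure induced by $\mathfrak r'$ on the fiber over $s'$ (identified via $\mathcal F' = f'^*\mathcal H$ with the fiber of $\mathcal H$ over $f'(s')$). Set-theoretically $\mathcal F' = g^*(\overline\alpha^*\mathcal H, \mathfrak r_0)$ and $g$ is the only such map. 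The remaining point — holomorphicity of $g$ — is what I expect to be the only nontrivial verification: it reduces, exactly as in the proof of lemma \ref{lemma_univ_family_bc}, to checking that the section of the affine bundle $\mathcal P_{\mathcal X', \mathcal D_2'}^{\mathfrak a'} = S' \times_T \mathcal P_{\mathcal X, \mathcal D'}^{\mathfrak a} \to S'$ determined by $\mathfrak r'$ is holomorphic. But this is immediate from the first step: the affine bundle structure on $\mathcal P_{\mathcal X, \mathcal D'}^{\mathfrak a}$ was set up precisely so that sections coming from genuine relative restricted branched projective structures are holomorphic. Composing this section with the projection to $\mathcal P_{\mathcal X, \mathcal D'}^{\mathfrak a}$ gives $g$ and concludes the proof.
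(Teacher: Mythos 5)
Your proposal is correct and follows essentially the same route as the paper: the paper constructs $\mathcal P_{\mathcal X, \mathcal D'}^{\mathfrak a}$ as an affine bundle over $T$ directed by $\pi_*\bigl(K_{\mathcal X/T}^{\otimes 2}(-\mathcal D)|_{\mathcal D'}\bigr)$ using the restricted analogue of lemma \ref{lemma_relative_bps_schwarzian_derivative}, equips the pullback family with the tautological restricted structure $\mathfrak r_0$, and then states that the discussion around lemma \ref{lemma_univ_family_bc} applies \emph{mutatis mutandis}. Your write-up simply carries out that \emph{mutatis mutandis} adaptation explicitly (diagonal section for holomorphy of $\mathfrak r_0$, the map $g(s')=(f'(s'), r'_{s'})$, and holomorphicity of $g$ via the affine-bundle structure), so there is nothing to add.
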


Now let $\mathcal I = (T, \mathcal X, \pi, \mathcal D, \mathfrak a, \mathcal D', \mathfrak r)$ be a family of restricted branched projective structures with genus $g$, branching degree $n$ and auxiliary degree $n'$.
Suppose $n+n' > 4g-4$.
Denote by $r_t$ the restricted branched projective structure in $\mathcal I$ over $t \in T$.
Consider
\begin{equation}
\mathcal P_{\mathcal X}^{\mathfrak a, \mathfrak r} = \left\{(t, p)|t \in T, p \in P_{X_t}^{a_t, r_t}\right\}
\end{equation}
the set of branched projective structures whose restrictions are given by the fibers of $\mathcal I$.
There is an obvious map
\begin{equation}
\rho : \mathcal P_{\mathcal X}^{\mathfrak a, \mathfrak r} \to T
\end{equation}

According to proposition \ref{proposition_space_bps_given_rbps}, the fiber of $\rho$ over $t \in T$ is either empty or an affine space directed by $H^0\left(X_t, K_{X_t}^{\otimes 2}\left(-D_t-D_t'\right)\right)$.
Since moreover $\deg(D_t)+\deg(D_t')=n+n'>4g-4$, the line bundle $K_{X_t}^{\otimes 2}\left(-D_t-D_t'\right)$ has negative degree, thus has no nonzero section.
As a consequence, the fibers of $\alpha$ contain at most one point, and $\mathcal P_{\mathcal X}^{\mathfrak a, \mathfrak r}$ can be seen as a subset of $T$.

Similarly to subsection \ref{subsection_schwarzian_parametrization_in_family}, consider the sheaf $\mathcal B \mathcal R_{\mathcal X}^{\mathfrak a, \mathfrak r}$ of branched projective structures on $\mathcal X$, relative to $T$, branched on $\mathcal D$, of relative branching class $\mathfrak a$, whose restriction to $\mathcal D'$ is $\mathfrak r$.
This is a locally nonempty sheaf of affine spaces directed by $K_{\mathcal X/T}^{\otimes 2}(-\mathcal D-\mathcal D')$ and thus is associated to a cohomology class $\overline{\mathcal C}(\mathfrak a, \mathfrak r) \in H^1\left(X, K_{\mathcal X/T}^{\otimes 2}(-\mathcal D-\mathcal D')\right)$ that vanishes if and only if $\mathcal B \mathcal R_{\mathcal X}^{\mathfrak a, \mathfrak r}$ has global sections.
One also has an exact sequence similar to \eqref{eq_leray_exact_sequence}~:
\begin{equation}\label{eq_leray_exact_sequence_rbps}
H^1\left(T, \pi_*K_{\mathcal X/T}^{\otimes 2}(-\mathcal D-\mathcal D')\right) \to H^1\left(\mathcal X, K_{\mathcal X/T}^{\otimes 2}(-\mathcal D - \mathcal D')\right) \xrightarrow{\overline \chi_{\mathcal H}} H^0\left(T, R^1\pi_*K_{\mathcal X/T}^{\otimes 2}(-\mathcal D - \mathcal D')\right)
\end{equation}

Write
\begin{equation}
\overline{\mathfrak c}(\mathfrak a, \mathfrak r) = \overline \chi_{\mathcal H}\left(\overline{\mathcal C}(\mathfrak a, \mathfrak r)\right)
\end{equation}
Since $h^0\left(K_{X_t}^{\otimes 2}\left(-D_t-D_t'\right)\right)$ vanishes for any $t \in T$, Riemann-Roch formula implies that\break $h^1\left(K_{X_t}^{\otimes 2}\left(-D_t-D_t'\right)\right)$ does not depend on $t \in T$, so that $R^1\pi_*K_{\mathcal X/T}^{\otimes 2}(-\mathcal D - \mathcal D')$ is a vector bundle on $T$ of which $\overline{\mathfrak c}(\mathfrak a, \mathfrak r)$ is a holomorphic section.
For any $t \in T$, one has $\overline{\mathfrak c}(\mathfrak a, \mathfrak r) = \overline c(a_t, r_t)$.
Thus $\mathcal P_{\mathcal X}^{\mathfrak a, \mathfrak r}$ is the zero locus of the section $\overline{\mathfrak c}(\mathfrak a, \mathfrak r)$.

Let $\mathcal I' = \left(\mathcal P_{\mathcal X}^{\mathfrak a, \mathfrak r}, \rho^* \mathcal X, \rho^* \pi, \rho^* \mathcal D, \rho^* \mathfrak a, \rho^* \mathcal D', \rho^* \mathfrak r\right)$.
The map $\chi_{\mathcal I'}$, defined as in \eqref{eq_leray_exact_sequence_rbps}, is the zero map.
Moreover, if $T$ is Stein, the exact sequence \eqref{eq_leray_exact_sequence_rbps} has vanishing first term, which implies that the second term also vanishes as soon as $\chi_{\mathcal I}$ vanishes.
As a consequence, up to restricting $\mathcal P_{\mathcal X}^{\mathfrak a, \mathfrak r}$ to an open subset, there is a branched projective structure on $\rho^* \mathcal X$ relative to $\mathcal P_{\mathcal X}^{\mathfrak a, \mathfrak r}$ of branching class $\rho^* \mathfrak a$ and whose restriction to $\rho^*\mathcal D'$ is $\mathfrak r$.
Such a relative branched projective structure is unique since the fibers of $\rho$ contain at most one element.
We denote it by $\mathfrak p_0$.

A \emph{family of branched projective structures over the family of restricted branched projective structures $\mathcal I = (T, \mathcal X, \pi, \mathcal D, \mathfrak a, \mathcal D', \mathfrak r)$} is a family of branched projective structures $\mathcal F_1 = (T_1, \mathcal X_1, \pi_1, \mathcal D_1, \mathfrak a_1, \mathfrak p_1)$ along with a morphism $f : T \to T'$ such that the family of branching classes $(T_1, \mathcal X_1, \pi_1, \mathcal D_1, \mathfrak a_1)$ is the pullback by $f$ of $(T, \mathcal X, \pi, \mathcal D, \mathfrak a)$ and the restriction of $\mathfrak p_1$ to $f^*\mathcal D'$ is $f^* \mathfrak r$.
Such a family $\mathcal F_1$ is \emph{universal} if any other such family is uniquely obtained as a pullback of $\mathcal F_1$.

Clearly, the family of branched projective structures $\left(\mathcal P_{\mathcal X}^{\mathfrak a, \mathfrak r}, \rho^* \mathcal X, \rho^* \pi, \rho^* \mathcal D, \rho^* \mathfrak a, \mathfrak p_0\right)$, along with the morphism $\rho : \mathcal P_{\mathcal X}^{\mathfrak a, \mathfrak r} \to T$ is a universal family of branched projective structures over $\mathcal I$.
We have shown:

\begin{lemma}
Let $\mathcal I$ be a family of restricted branched projective structures of genus $g$, branching degree $n$ and auxiliary degree $n'$. If $n+n' > 4g-4$, there exists a universal family of branched projective structures over $\mathcal I$.
\end{lemma}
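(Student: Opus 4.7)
The plan is to verify that the family $\mathcal F := \left(\mathcal P_{\mathcal X}^{\mathfrak a, \mathfrak r}, \rho^*\mathcal X, \rho^*\pi, \rho^*\mathcal D, \rho^*\mathfrak a, \mathfrak p_0\right)$ with the morphism $\rho$, already exhibited immediately before the statement, satisfies the universal property. The decisive input is the inequality $n+n' > 4g-4$: it guarantees $h^0\left(X_t, K_{X_t}^{\otimes 2}(-D_t-D_t')\right) = 0$ for all $t$, so by proposition \ref{proposition_space_bps_given_rbps} each fiber of $\rho$ contains at most one element. This makes both the classifying map and the identification of projective structures essentially tautological.

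Given another family $\mathcal F_1 = (T_1, \mathcal X_1, \pi_1, \mathcal D_1, \mathfrak a_1, \mathfrak p_1)$ over $\mathcal I$ with structural morphism $f : T_1 \to T$, I would first define $g : T_1 \to \mathcal P_{\mathcal X}^{\mathfrak a, \mathfrak r}$ set-theoretically by $g(t_1) = \left(f(t_1), \mathfrak p_1|_{X_{1,t_1}}\right)$. Any map $g$ satisfying $\rho \circ g = f$ is forced to take this form because the fibers of $\rho$ are singletons, so uniqueness of $g$ is immediate.

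To upgrade $g$ to a morphism of analytic spaces, I would use that $\mathcal P_{\mathcal X}^{\mathfrak a, \mathfrak r}$ is, as a subspace of $T$, the zero locus of the holomorphic section $\overline{\mathfrak c}(\mathfrak a, \mathfrak r)$ of the vector bundle $R^1\pi_*K_{\mathcal X/T}^{\otimes 2}(-\mathcal D-\mathcal D')$. It thus suffices to check that $f^*\overline{\mathfrak c}(\mathfrak a, \mathfrak r) = 0$. Naturality of both lemma \ref{lemma_obstruction_affine_sheaf} applied to the sheaf $\mathcal B\mathcal R_{\mathcal X}^{\mathfrak a,\mathfrak r}$ and of the Leray edge map in \eqref{eq_leray_exact_sequence_rbps} under base change gives $f^*\overline{\mathfrak c}(\mathfrak a, \mathfrak r) = \overline{\mathfrak c}(\mathfrak a_1, \mathfrak r_1)$. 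But $\mathfrak p_1$ is a global section of $\mathcal B\mathcal R_{\mathcal X_1}^{\mathfrak a_1, \mathfrak r_1}$, hence $\overline{\mathcal C}(\mathfrak a_1, \mathfrak r_1) = 0$ and therefore $\overline{\mathfrak c}(\mathfrak a_1, \mathfrak r_1) = 0$. So $f$ factors through the closed analytic subspace $\mathcal P_{\mathcal X}^{\mathfrak a, \mathfrak r}$, producing the required analytic $g$.

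Finally, to verify $\mathcal F_1 = g^*\mathcal F$, the underlying family of branching classes with auxiliary divisor is pulled back tautologically from the pullback from $T$ via $f = \rho \circ g$. For the branched projective structure, $g^*\mathfrak p_0$ and $\mathfrak p_1$ are two global sections of $\mathcal B\mathcal R_{\mathcal X_1}^{\mathfrak a_1, \mathfrak r_1}$, and since the fibers of this sheaf over $T_1$ are singletons (same degree estimate as above), they agree on every fiber and hence coincide. The one genuine verification — and thus the main obstacle, such as it is — is the compatibility $f^*\overline{\mathfrak c}(\mathfrak a, \mathfrak r) = \overline{\mathfrak c}(\mathfrak a_1, \mathfrak r_1)$; everything else reduces to the singleton-fiber observation.
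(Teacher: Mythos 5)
Your proposal is correct and follows the same route as the paper: the paper constructs $\mathcal P_{\mathcal X}^{\mathfrak a, \mathfrak r}$ as the vanishing locus of $\overline{\mathfrak c}(\mathfrak a, \mathfrak r)$ together with the tautological structure $\mathfrak p_0$, and then asserts universality as ``clear,'' which is exactly the verification you carry out via the singleton-fiber observation and the base-change compatibility of the obstruction section. Your write-up simply makes explicit the step the paper leaves implicit.
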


Now we are able to complete the proof of proposition \ref{proposition_universal_family_bps_over_bc}.
Let $\mathcal G$ be a family of branching classes of genus $g$ and branching degree $n$, of basis $T_{\mathcal G}$.
Take $\mathcal D'$ a divisor on the total space of $\mathcal G$ that contains no fiber of $\mathcal G$ and let $\mathcal H$ be the corresponding family of branching classes with auxiliary divisors.
Write $n'$ the auxiliary degree of $\mathcal H$, and choose $\mathcal D'$ such that $n+n' > 4g-4$.
Let $\mathcal I$ be the universal family of restricted projective structures over $\mathcal H$, $T_{\mathcal I}$ its basis and $f_{\mathcal G \mathcal I} : T_{\mathcal I} \to T_{\mathcal G}$ the associated morphism.
Now let $\mathcal F$, of basis $T_{\mathcal F}$, be the universal family of branched projective structures over $\mathcal I$.
Write $f_{\mathcal I \mathcal F} : T_{\mathcal F} \to T_{\mathcal I}$ the associated morphism.
The family $\mathcal F$, along with the morphism $f_{\mathcal G \mathcal F} = f_{\mathcal G \mathcal I} \circ f_{\mathcal I \mathcal F} : T_{\mathcal F} \to T_{\mathcal G}$ is a family of branched projective structures over $\mathcal G$.

Let $\mathcal F_1$, of basis $T_{\mathcal F_1}$ with a morphism $f_{\mathcal G \mathcal F_1} : T_{\mathcal F_1} \to T_{\mathcal G}$, be another family of branched projective structures over $\mathcal G$.
By restricting the relative branched projective structure of $\mathcal F_1$ to the pullback auxiliary divisor $f_{\mathcal G \mathcal F_1}^* \mathcal D'$, one gets a family of restricted branched projective structures $\mathcal I_1$, of basis $T_{\mathcal F_1}$, over the family $\mathcal H$, the associated morphism being $f_{\mathcal G \mathcal F_1}$.
By universality of $\mathcal I$, the family $\mathcal I_1$ is the pullback of $\mathcal I$ by some morphism $f_{\mathcal I \mathcal F_1}$ verifying $f_{\mathcal G \mathcal F_1} = f_{\mathcal G \mathcal I} \circ f_{\mathcal I \mathcal F_1}$.
As a consequence, $\mathcal F_1$ is a family of branched projective structures over $\mathcal I$, the associated morphism being given by $f_{\mathcal I \mathcal F_1}$.
By universality of $\mathcal F$, there exists a morphism $f_{\mathcal F \mathcal F_1} : T_{\mathcal F_1} \to T_{\mathcal F}$ such that $\mathcal F_1 = f_{\mathcal F \mathcal F_1}^* \mathcal F$ and $f_{\mathcal I \mathcal F_1} = f_{\mathcal I \mathcal F} \circ f_{\mathcal F \mathcal F_1}$ thus $f_{\mathcal G \mathcal F_1} = f_{\mathcal G \mathcal F} \circ f_{\mathcal F \mathcal F_1}$.

It remains to show that $f_{\mathcal F \mathcal F_1}$ is the unique such morphism.
Suppose given $f : T_{\mathcal F_1} \to T_{\mathcal F}$ such that $\mathcal F_1 = f^* \mathcal F$ and $f_{\mathcal G \mathcal F_1} = f_{\mathcal G \mathcal F} \circ f$.
The family of restricted branched projective structures $\mathcal I_1$ is obtained by pulling back $\mathcal I$ by the morphism $f \circ f_{\mathcal I \mathcal F}$.
By universality of $\mathcal I$, one has $f_{\mathcal I \mathcal F} \circ f = f_{\mathcal I \mathcal F_1}$.
Thus $\mathcal F_1$ is the pullback of $\mathcal F$ by $f$ \emph{as a family of branched projective structures over $\mathcal I$}.
Since $\mathcal F$ is a universal such family, $f = f_{\mathcal F \mathcal F_1}$.
This concludes the proof of proposition \ref{proposition_universal_family_bps_over_bc}.

\section{Holonomy and Singularities of the Moduli Space of Marked Branched Projective Structures in Genus at Least Two}\label{section_singular_points}

In this section, we prove the following theorem. Recall that if $n, g \in \N$, $\mathcal P_g(n)$ stands for the moduli space of isomorphism classes of branched projective of genus $g$ and branching degree $n$.

\begin{theorem}\label{theorem_singular_points}
Let $n, g \in \N$, with $g \ge 2$.
Let $p$ be a branched projective structure of genus $g$ and branching degree $n$ on a marked Riemann surface $X$.
Let $\rho \in \Hom\left(\pi_1(X), \PSL(2, \C)\right)$ be a monodromy representation for $g$.
The branched projective structure $p$ is a singular point of the moduli space $\mathcal P_g(n)$ if and only if the image $\Image \rho \subset \PSL(2, \C)$ of the monodromy representation $\rho$ has commutative image and the action of $\Image \rho$ on $\CP^1$ admits a fixed point.
\end{theorem}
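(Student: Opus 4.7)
The plan is to follow the stratification of Section \ref{section_families_branched_projective_structures} and apply the Kodaira-Spencer machinery stratum by stratum, as announced in the introduction of this section. Fix an auxiliary divisor $\mathcal D'$ with $n + n' > 4g-4$ so that, by Subsection \ref{subsection_families_rbps}, the moduli space $\mathcal P_g(n)$ is locally realised around $p$ as the zero locus of an obstruction section $\overline{\mathfrak c}$ of a holomorphic vector bundle over the smooth moduli space of marked restricted branched projective structures. Since $X$ is a curve and $H^2$ of any coherent sheaf on it vanishes, no higher obstructions arise, so smoothness of $\mathcal P_g(n)$ at $p$ is equivalent to surjectivity of the differential $d_p\overline{\mathfrak c}$.

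At each of the four lower strata (marked curves, marked curves with divisors, marked branching classes, marked restricted branched projective structures), the tangent space is identified with the first cohomology of an appropriate sheaf of infinitesimal automorphisms on $X$ via a standard deformation-theoretic argument in the style of Hubbard. Assembling these identifications through the affine-bundle structures established in Section \ref{section_families_branched_projective_structures} and applying Serre duality, the cokernel $\mathrm{coker}(d_p\overline{\mathfrak c})$ is identified with the dual of the space $\mathrm{aut}(p)$ of global infinitesimal automorphisms of the marked branched projective structure $p$. An Euler-characteristic count then yields $\dim T_p\mathcal P_g(n) = 6g-6+n+\dim\mathrm{aut}(p)$, so $p$ is smooth of the expected dimension if and only if $\mathrm{aut}(p) = 0$.

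The remaining step is to read off $\mathrm{aut}(p)$ from the monodromy $\rho$. Via the developing map and equivariance, a global infinitesimal automorphism of $p$ corresponds to a $\rho$-equivariant Möbius vector field on $\CP^1$, i.e.\ an element of the centraliser $Z_{\mathfrak{sl}(2,\C)}(\Image\,\rho)$ that is furthermore compatible with the branching of $p$. The centraliser vanishes when $\Image\,\rho$ is nonelementary, handling that case. For elementary $\Image\,\rho$ one uses the classification of elementary subgroups of $\PSL(2,\C)$ --- fixing one point of $\CP^1$, fixing a pair of points, or spherical --- and verifies that the centralising Möbius vector field integrates into an actual one-parameter family of marked branched projective structures only when $\Image\,\rho$ is abelian and admits a fixed point in $\CP^1$. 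In that case one can produce the deformation explicitly: for instance, in the unipotent subcase, by integrating a translation-invariant meromorphic form on $X$; and in the scaling subcase, by dilating the developing map in the normal form given by a multiplicatively invariant meromorphic form on $X$.

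The main obstacle I anticipate is this last step, for two reasons. First, one has to pin down the sheaf of infinitesimal automorphisms near branch points: the pushforward of vector fields through a ramified chart and the allowed infinitesimal motion of the branching divisor interact delicately, and the resulting sheaf is not locally free on $X$. Second, the case-by-case analysis must rule out the non-abelian elementary cases (dihedral and non-cyclic spherical), where one has to show that the centralising vector field, if any, fails to integrate to a deformation through branched projective structures compatible with the branching data of $p$.
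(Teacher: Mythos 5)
Your overall architecture matches the paper's: realize $\mathcal P_g(n)$ as the zero locus of an obstruction section over the smooth moduli space of restricted branched projective structures (auxiliary divisor with $n+n'>4g-4$), identify tangent spaces via Kodaira--Spencer stratum by stratum, and reduce smoothness to a condition on the monodromy. But several steps, as written, either fail or miss what actually has to be done.

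First, ``smoothness of $\mathcal P_g(n)$ at $p$ is equivalent to surjectivity of $d_p\overline{\mathfrak c}$'' is not justified by the vanishing of $H^2$ on curves: the zero locus of a section can be smooth at points where the differential degenerates, so only the direction ``surjective $\Rightarrow$ smooth'' is free. For the converse the paper shows that when the cokernel is nonzero the Zariski tangent space has dimension $6g-6+n+h^2\left(X,\Lambda_p\right)>6g-6+n$. Second, and more seriously, that cokernel is $H^2\left(X,\Lambda_p\right)\simeq H_0\left(X,\Ad_p\right)$, the coinvariants of the adjoint local system --- by the invariant pairing, the dual of $H^0\left(X,\Ad_p\right)=Z_{\mathfrak{sl}(2,\C)}(\Image\rho)$. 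It is \emph{not} the dual of the space of global infinitesimal automorphisms of $p$: that space is $H^0\left(X,\Lambda_p\right)=0$ for every $p$ since $g\ge2$, and with that reading your criterion would declare $\mathcal P_g(n)$ everywhere smooth. Relatedly, your final step is misdirected: the relevant sections of $\Ad_p$ are meromorphic vector fields with poles at the branch points, no ``compatibility with the branching'' is imposed, no ``integration to an actual one-parameter family'' is needed (nor would it prove singularity), and in the dihedral and non-abelian spherical cases there is no centralizing vector field to rule out --- the centralizer is already zero, which is precisely why those points are smooth. Finally, the Kodaira--Spencer machinery only computes the tangent space of the partition stratum $\mathcal P_g(\kappa)$, giving $h^1\left(X,\Lambda_p\right)=6g-6+r+h^2\left(X,\Lambda_p\right)$ with $r$ the number of \emph{distinct} branch points; for non-reduced branching divisors this does not control $\dim T_p\mathcal P_g(n)$, and the paper needs a separate argument (density of reduced-divisor structures with the same holonomy, from Calsamiglia--Deroin--Francaviglia, plus closedness of the singular locus) that your proposal omits entirely.
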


\begin{remark}
\begin{itemize}
\item[(i)] It is equivalent to say that $p$ is a singular point of $\mathcal P_g(n)$ if and only if, up to conjugation, $\Image \rho$ is contained either in $\{z \mapsto \alpha z, \alpha \in \C^*\}$ or in $\{z \mapsto z + \alpha, \alpha \in \C\}$.
\item[(ii)] In particular, branched projective structures with nonelementary holonomy are smooth points of $\mathcal P_g(n)$. 
The holonomy of a branched projective structure whose degree is either odd or smaller than $2g-2$ is nonelementary (see \cite{gallo-kapovich-marden}).
Thus if $n$ is odd or $n < 2g-2$, then $\mathcal P_g(n)$ is smooth.
\item[(iii)] A branched projective structure in $\mathcal P_g(n)$ is a smooth point if and only if its holonomy is a smooth point of the analytic space $\Hom\left(\pi_1(X), \PGL(2, \C)\right)$ (see \cite{hubbard} for singularities of the latter), where $X$ is the underlying Riemann surface.
\item[(iv)] The holonomy $\rho \in \Hom\left(\pi_1(X), \PGL(2, \C)\right)$ of a branched projective structure $p$ is only defined up to conjugation with an element of $\PGL(2, \C)$.
However, the criterium on $\rho$ for $p$ to be singular does not depend on the choice of $\rho$.
\end{itemize}
\end{remark}

In order to study the singularities of $\mathcal P_g(n)$, we need to have an insight in its the tangent spaces.
We achieve this by studying the infinitesimal automorphisms of branched projective structures.

\subsection{Infinitesimal Automorphisms of a Branched projective Structure}\label{subsection_infinitesimal_automorphisms}

Let $X$ be a compact Riemann surface of genus $g \ge 2$ and $p$ a branched projective structure on $X$ of branching divisor $D$  and branching class $a \in A_X^D$.
Denote by $\Lambda_p$ the sheaf of \emph{infinitesimal automorphisms} of the branched projective structure $p$.
A local vector field $V$ defined on a open subset $U \subset X$ is a local section of $\Lambda_p$ if the flow of $V$ (defined at small times on relatively compact subsets of $U$) preserves the charts of $p$.
More precisely, if $\phi_V^t$ is the flow of $V$ at time $t \in \C$, then $V \in \Gamma\left(U, \Lambda_p\right)$ if and only if for any relatively compact open subset $U' \subset U$, any chart $f : U' \to \CP^1$ of $p$ and any small enough $t \in \C$, the map $f \circ \phi_V^t$ is again a chart of $p$.

The sheaf $\Lambda_p$ is a subsheaf of $T_X\left(-D^{\red}\right)$, where $D^{\red}$ is the reduced divisor associated to the branching divisor $D$.
A germ of holomorphic vector field $V_{(x)}$ at $x \in X$ is in the stalk of $\Lambda_p$ at $x$ if and only if $V_{(x)}$ is the pullback of a germ of projective vector field by some chart of $p$.

Let $\rho \in \Hom(\pi_1(X), \PSL(2, \C)$ be a monodromy representation of $p$.
Let $\Ad_p$ be the sheaf on $X$ whose stalk at $x \in X$ contains germs of meromorphic vector fields at $x$ that are obtained by pulling back a germ of projective vector field by a chart of the branched projective structure $p$.
In particular, $\Ad_p$ is a subsheaf of the sheaf of meromorphic functions on $X$ and $\Lambda_p$ is a subsheaf of $\Ad_p$.
The sheaf $\Ad_p$ is a local system whose stalks are $3$-dimensional Lie algebras modeled on $\mathfrak{sl}(2, \C)$, and whose holonomy is given by $\rho$, where $\PSL(2, \C)$ acts on its Lie algebra by the adjoint action.

Let $x \in X$ be a branched point of $p$ of degree $n$ and $z$ a local coordinate of $X$ centered at $x$ such that $f(z) = z^{n+1} \in \CP^1 = \C \cup \{\infty\}$ is a chart of $p$.
The pullback by $f$ of the vector field $W = \left(a_0 + a_1 \zeta + a_2 \zeta^2\right) \partial_{\zeta}$, where $\zeta$ is the classical coordinate on $\CP^1 = \C \cup \{\infty\}$, is given by
\begin{equation}
f^*W = \left(a_0\frac 1 {z^n} + a_1 z + a_2 z^{n+2}\right) \partial_z
\end{equation}
This implies that the quotient $\Ad_p / \Lambda_p$ is supported on the branched points of $p$ and the stalk of $\Ad_p / \Lambda_p$ at any branched point of $p$ is a complex line, whatever the branching degree.
The short exact sequence
\begin{equation}
0 \to \Lambda_p \to \Ad_p \to \Ad_p / \Lambda_p \to 0
\end{equation}
gives in cohomology the long exact sequence (recall that $X$ has no infinitesimal automorphism, so $H^0\left(X, \Lambda_p\right) = 0$)
\begin{equation}\label{eq_exact_autom_local_syst}
\begin{aligned}
&0 \to H^0\left(X, \Ad_p\right) \to H^0\left(X, \Ad_p / \Lambda_p\right) \to H^1\left(X, \Lambda_p\right) \to H^1\left(X, \Ad_p\right) \to 0 \to \\
&  \to  H^2\left(X, \Lambda_p\right) \to H^2\left(X, \Ad_p\right) \to 0
\end{aligned}
\end{equation}
In particular, the injection $\Lambda_p \to \Ad_p$ induces an isomorphism $H^2\left(X, \Lambda_p\right) \simeq H^2\left(X, \Ad_p\right)$.

The sheaf cohomology of the local system $\Ad_p$ is the same as its cohomology with local coefficients, see \cite{steenrod}.
Moreover, Poincaré duality provides an isomorphism $H^2\left(X,  \Ad_p\right) \simeq H_0\left(X, \Ad_p\right)$, where the latter group is the homology with local coefficients of $\Ad_p$, see \cite{sun} and references therein.
We have the following lemma, see \cite{whitehead}.
\begin{lemma}
Let $L_0 \subset \mathfrak g$ be the linear subspace generated by the family $\left(\rho(\gamma)\cdot W - W\right)_{\begin{subarray}{1}\gamma \in \pi_1(X) \\ W \in \mathfrak g \end{subarray}}$.
Let $L_1 \subset \mathfrak g$ be the linear subspace of fixed points of the action on $\pi_1(X)$ on $\mathfrak g$ : $L_1 = \{W \in \mathfrak g | \forall \gamma \in \pi_1(X), \rho(\gamma) \cdot W = W\}$.
One has
\begin{itemize}
\item[(i)] $H_0\left(X, \Ad_p\right) = \mathfrak g / L_0$
\item[(ii)] $H^0\left(X, \Ad_p\right) = L_1$
\end{itemize}
\end{lemma}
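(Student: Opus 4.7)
The plan is to identify $\Ad_p$ with the local system associated to the $\pi_1(X)$-module $\mathfrak g = \mathfrak{sl}(2,\C)$ on which $\pi_1(X)$ acts through $\Ad \circ \rho$, and then to compute its degree-zero sheaf cohomology and degree-zero homology with local coefficients via the universal cover $\widetilde X \to X$. Since $\widetilde X$ is simply connected, the pullback of $\Ad_p$ to $\widetilde X$ is the constant sheaf with fiber $\mathfrak g$, and $\Ad_p$ itself is the quotient of the constant sheaf $\mathfrak g$ on $\widetilde X$ by the diagonal $\pi_1(X)$-action (translations on $\widetilde X$, adjoint action via $\rho$ on $\mathfrak g$). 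I would set this identification up explicitly using the fact that $\Ad_p$ is a local system whose monodromy is by hypothesis $\Ad \circ \rho$, as already recorded in the paper just before the lemma.

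For point (ii), a global section $s \in H^0(X, \Ad_p)$ lifts uniquely to a global section $\widetilde s$ of the pullback local system on $\widetilde X$. Since this pullback is the constant sheaf $\mathfrak g$ and $\widetilde X$ is connected, $\widetilde s$ is a constant function with some value $W \in \mathfrak g$. The compatibility of $\widetilde s$ with the deck transformations is exactly the condition $\rho(\gamma) \cdot W = W$ for all $\gamma \in \pi_1(X)$, so the assignment $s \mapsto W$ gives the desired identification $H^0(X, \Ad_p) = L_1$.

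For point (i), I would invoke the standard description of homology with local coefficients: using a CW-structure on $X$ pulled back to one on $\widetilde X$, the chain complex computing $H_*(X, \Ad_p)$ is $C_*(\widetilde X) \otimes_{\mathbb Z[\pi_1(X)]} \mathfrak g$, where $\pi_1(X)$ acts on $C_*(\widetilde X)$ by deck transformations and on $\mathfrak g$ via $\Ad \circ \rho$. In degree zero this yields
\begin{equation}
H_0(X, \Ad_p) = \mathfrak g \otimes_{\mathbb Z[\pi_1(X)]} \mathbb Z = \mathfrak g / I \cdot \mathfrak g,
\end{equation}
where $I \subset \mathbb Z[\pi_1(X)]$ is the augmentation ideal. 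Since $I \cdot \mathfrak g$ is precisely the subspace generated by vectors of the form $\rho(\gamma)\cdot W - W$, this gives $H_0(X, \Ad_p) = \mathfrak g / L_0$, which is the desired formula.

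The computations themselves are entirely standard; the only delicate point is the compatibility of the two descriptions of $\Ad_p$ — as the sheaf of pullbacks of projective vector fields by charts on the one hand, and as the local system with monodromy $\Ad \circ \rho$ on the other. This was however already established in the paragraph preceding the lemma, so I would simply cite it.
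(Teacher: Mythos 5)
Your proof is correct. The paper gives no argument for this lemma at all — it simply cites a reference (\cite{whitehead}) — and what you have written out is exactly the standard computation that citation stands for: $H^0$ of a local system on a connected space is the monodromy invariants (via constancy of the pulled-back section on the universal cover), and $H_0$ with local coefficients is the coinvariants $\mathfrak g / I\mathfrak g = \mathfrak g / L_0$ computed from the chain complex $C_*(\widetilde X)\otimes_{\mathbb Z[\pi_1(X)]}\mathfrak g$; you also correctly note that the only point specific to this paper, namely the identification of $\Ad_p$ with the local system of monodromy $\Ad\circ\rho$, is established in the paragraph preceding the lemma and can be cited rather than reproved.
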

Take $W \in \mathfrak g$.
Recall that $\mathfrak g$ is the Lie algebra of projective vector fields, so $W$ can be written $W = \left(a_0 + a_1 \zeta + a_2 \zeta^2 \right) \partial_{\zeta}$.
A computation shows that if $g \in G$ is the Möbius transformation $g : \zeta \mapsto \frac{\alpha \zeta + \beta}{\gamma \zeta + \delta}$, then
\begin{itemize}
\item[(i)] If $\alpha = \delta = 1$, $\gamma = 0$, i.e. $g$ is parabolic fixing $\infty \in \CP^1$, then $g \cdot W = W + \bigl((a_2 \beta^2 - a_1 \beta) - 2a_2 \beta \zeta\bigr) \partial_{\zeta}$
\item[(ii)] If $\beta = \gamma = 0$, $\delta = 1$, i.e. $g$ is either elliptic or loxodromic fixing $0$ and $\infty$ in $\CP^1$, then $g \cdot W = W + \bigl(a_0(\alpha-1) + a_2(\frac{1}{\alpha} - 1)\zeta^2\bigr)\partial_{\zeta}$
\end{itemize}
As a consequence, there are three cases for the dimensions of the spaces in equation \eqref{eq_exact_autom_local_syst}.
Here we denote by $r$ the number of branched points of $p$, without taking the multiplicity into account, so that $h^0\left(X, \Ad_p / \Lambda_p\right) = r$.
Recall that the Euler characteristic of $X$ is given by the alternate sum of the betti numbers of the cohomology of any local system on $X$, so in particular $3(2-2g) = h^0\left(X, \Ad_p\right) - h^1\left(X, \Ad_p\right) + h^2\left(X, \Ad_p\right)$ (here the factor $3$ stands for the dimension of the model space for coefficients $\mathfrak g$). 
\begin{itemize}
\item[(i)] If the image of $\rho$ is the trivial subgroup of $G$, $L_0 = \{0\}$, $L_1 = \mathfrak g$, so $h^0\left(X, \Ad_p\right) = 3$, and $h^2\left(X, \Lambda_p\right) = h^2\left(X, \Ad_p\right) = h_0\left(X, \Ad_p\right) = 3$.
As a consequence $h^1\left(X, \Ad_p\right) = 6g$, and $h^1\left(X, \Lambda_p\right) = 6g-3+r$.
\item[(ii)] Suppose the image $\Image(\rho) \subset G$ is not the trivial subgroup but all the nontrivial elements of $\Image(\rho)$ have the same set of fixed points.
More precisely, the nontrivial elements of $\Image(\rho)$ are either all parabolic transformations with the same fixed point (conjugate to $z \mapsto z + \beta$), or all loxodromic or elliptic tranformations with the same two fixed points (conjugate to $z \mapsto \alpha z$).
Then $h^0\left(X, \Ad_p\right) = 1$, and $h^2\left(X, \Lambda_p\right) = h^2\left(X, \Ad_p\right) = h_0\left(X, \Ad_p\right) = 1$.
As a consequence $h^1\left(X, \Ad_p\right) = 6g-4$, and $h^1\left(X, \Lambda_p\right) = 6g-5+r$.
\item[(iii)] In the other cases, i.e. if the image $\Image(\rho) \subset G$ contains at least two nontrivial transformations $g_1$ and $g_2$ with $g_1$ not fixing a fixed point of $g_2$, then $h^0\left(X, \Ad_p\right) = 0$, and $h^2\left(X, \Lambda_p\right) = h^2\left(X, \Ad_p\right) = h_0\left(X, \Ad_p\right) = 0$.
As a consequence $h^1\left(X, \Ad_p\right) = 6g-6$, and $h^1\left(X, \Lambda_p\right) = 6g-6+r$.
\end{itemize}

\begin{remark}
The group $h^2\left(X, \Lambda_p\right)$ is nonzero if and only if the monodromy representation $\rho$ is a singular point of $\Hom\left(\pi_1(X), G\right)$ (see \cite{hubbard}).
\end{remark}

\subsection{Tangent Spaces of the Universal Spaces of Divisors}\label{subsection_tangent_spaces_univ_spaces_div}

Let $n, g \in \N$ with $g \ge 2$.
Following subsection \ref{subsection_families_bps}, let $\mathcal U_g = \left(\mathcal T_g, \mathcal X_g, \pi_g, m_g\right)$ be the universal family of marked Riemann surfaces of genus $g$ ($m_g$ is the marking).
In particular $\mathcal T_g$ is the Teichmüller space.
Let $\mathcal U_g^{\Div}(n) = \left(\mathcal T_g(n), \mathcal X_g^{\Div}(n), \pi_g^{\Div}(n), m_g^{\Div}(n), \mathcal D_g^{\Div}(n)\right)$ be the universal family of marked curves of genus $g$ with divisors of degree $n$, and let $\gamma_g(n) : \mathcal T_g(n) \to \mathcal T_g$ be the forgetful morphism.
In particular, $\mathcal T_g(n)$ is the moduli space of marked Riemann surfaces with divisors.

A \emph{partition} of $n$ is an ordered finite sequence of stictly positive integers $(k_1 \le k_2 \le \cdots \le k_r)$ with $k_1 + \cdots + k_r = n$.
Denote by $\Part(n)$ the set of partitions of $n$.
Let $(X, D, m) \in \mathcal T_g(n)$~: $X$ is a Riemann surface, $m$ is a marking on $X$, and $D$ is a divisor of degree $n$ on $X$.
The divisor $D$ is given by $D = k_1 \cdot x_1 + \cdots + k_r \cdot x_r$ with $k_i \in \N^*$, $x_i \in X$ pairwise distinct and $k_1 + \cdots + k_r = n$.
The $k_i$'s can be chosen to be ordered, so that the sequence $(k_1, \dots, k_r)$ is a partition of $n$, uniquely determined by $D$.

In this way we define a map $\varpi : \mathcal T_g(n) \to \Part(n)$.
For any partition $\kappa = (k_1 \le \dots \le k_r)$ of $n$, $\varpi^{-1}(\kappa)$ is a smooth $3g-3+r$-dimensional analytic subspace of $\mathcal T_g(n)$ (see \cite{barlet-magnusson} for details on the structure of the symmetric powers of a smooth curve).
Let us denote this space by $\mathcal T_g(\kappa) = \varpi^{-1}(\kappa)$, and by $\mathcal U_g(\kappa) = \left(\mathcal T_g(\kappa), \mathcal X_g^{\Div}(\kappa),  \mathcal D_g^{\Div}(\kappa), m_g^{\Div}(\kappa), \pi_g^{\Div}(\kappa)\right)$ the restriction of the family $\mathcal U_g(n)$ to the basis $\mathcal T_g(\kappa)$.
Let us also denote by $\gamma_g(\kappa) : \mathcal T_g(\kappa) \to \mathcal T_g$ the forgetful submersion, restriction of $\gamma_g(n)$.
Finally, denote by $\Sym_{\kappa}(X)$ the submanifold of the symmetric power $\Sym^n(X)$ containing the divisors whose associated partition is $\kappa$.

Let us now focus on the space $\mathcal T_g(\kappa)$ from the point of view of infinitesimal deformations of marked curves with divisor.
Let $(X, D, m) \in \mathcal T_g(\kappa)$ be a smooth marked complex curve with divisor ($m$ is the marking on $X$).
One has a first exact sequence:
\begin{equation}\label{eq_exact_tan_teich}
0 \to \left(T_{\Sym_{\kappa}(X)}\right)_D \to \left(T_{\mathcal T_g(\kappa)}\right)_{(X, D, m)} \to \left(T_{\mathcal T_g}\right)_{(X, m)} \to 0
\end{equation}
and another one, denoting by $D^{\red}$ the reduced divisor on $X$ with the same support as $D$ :
\begin{equation}
0 \to T_X\left(-D^{\red}\right) \to T_X \to T_X|_{D^{\red}} \to 0
\end{equation}
Since $g \ge 2$, one has in cohomology :
\begin{equation}
0 \to H^0\left(X, T_X|_{D^{\red}}\right) \to H^1\left(X, T_X\left(-D^{\red}\right)\right) \to H^1\left(X, T_X\right) \to 0
\end{equation}

There is an obvious isomorphism $\iota : \left(T_{\Sym_{\kappa}(X)}\right)_D \xrightarrow{\sim} H^0\left(X, T_X|_{D^{\red}}\right)$.
One also has an isomorphism $KS : \left(T_{\mathcal T_g}\right)_{(X, m)} \xrightarrow{\sim} H^1\left(X, T_X\right)$, called the Kodaira-Spencer isomorphism, of which we recall the definition.

Consider the exact sequence
\begin{equation}\label{eq_kodaira_spencer_exact_sequence}
0 \to T_{\mathcal X_g/\mathcal T_g} \to T_{\mathcal X_g} \xrightarrow{d\pi_g} \pi_g^*T_{\mathcal T_g} \to 0
\end{equation}
By applying the left exact functor $\pi_{g*}$, it gives a morphism of vector bundles $\pi_{g*}\pi_g^*T_{\mathcal T_g} = T_{\mathcal T_g} \to R^1\pi_{g*}T_{\mathcal X_g/\mathcal T_g}$. 
The map $KS$ is the former isomorphism restricted to the fiber above $(X, m)$ (the equality $\pi_{g*}\pi_g^* T_{\mathcal T_g} = T_{\mathcal T_g}$ stands because $\pi_g$ has compact fibers).
It is a property of the Teichmüller space that the map $KS$ is an isomorphism (see for instance \cite{earle-eells}).

Let us denote by $\mathcal D_g^{\Div}(\kappa)$ the restriction of the divisor $\mathcal D_g^{\Div}(n)$ to the manifold $\mathcal X_g^{\Div}(\kappa)$.
The reduced divisor associated to  $\mathcal D_g^{\Div}(\kappa)$, denoted by $\mathcal D_g^{\Div}(\kappa)^{\red}$, is a complex manifold (a smooth analytic subspace), that is transverse to the fibers of $\pi_g^{\Div}(\kappa)$.
As a consequence, when writing the Kodaira-Spencer exact sequence \eqref{eq_kodaira_spencer_exact_sequence} for the family $\mathcal U_g^{\Div}(\kappa)$, we can restrict to the vector fields on $\mathcal X_g^{\Div}(\kappa)$ that are tangent to $\mathcal D_g^{\Div}(\kappa)^{\red}$ :

\begin{equation}
0 \to T_{\mathcal X_g^{\Div}(\kappa)/\mathcal T_g(\kappa)}\left(-\mathcal D_g^{\Div}(\kappa)^{\red}\right) \to T_{\mathcal X_g^{\Div}(\kappa)}\left(-\log\mathcal D_g^{\Div}(\kappa)^{\red}\right) \xrightarrow{d\pi_g^{\Div}(\kappa)} \pi_g^{\Div}(\kappa)^*T_{\mathcal T_g(\kappa)} \to 0
\end{equation}

Recall that $T_{\mathcal X_g^{\Div}(\kappa)}\left(-\log\mathcal D_g^{\Div}(\kappa)^{\red}\right)$ is the sheaf of vector fields on $\mathcal X_g^{\Div}(\kappa)$ that are tangent to $\mathcal D_g^{\Div}(\kappa)^{\red}$.
Note that the reason why, in the above diagram, the arrow $d\pi_g^{\Div}(\kappa)$ is surjective is that $\mathcal D_g^{\Div}(\kappa)^{\red}$ is a smooth analytic subspace transverse to the fibers of $d\pi_g^{\Div}(\kappa)$.
By applying the left exact functor $\pi_g^{\Div}(\kappa)_*$, one obtains in particular a morphism of vector bundles $T_{\mathcal T_g(\kappa)} \to R^1\pi_g^{\Div}(\kappa)_*T_{\mathcal X_g^{\Div}(\kappa)/\mathcal T_g(\kappa)}\left(-\mathcal D_g^{\Div}(\kappa)^{\red}\right)$.
By looking at that morphism at the point $(X, D, m) \in \mathcal T_g(\kappa)$, one obtains a linear map

\begin{equation}\label{eq_kodaira-spencer_2}
KS^{\Div} : \left(T_{\mathcal T_g(\kappa)}\right)_{(X, D, m)} \to H^1\left(X, T_X\left(-D^{\red}\right)\right)
\end{equation}

It is easy to see that the following diagram commutes :

\begin{equation}
\begin{tikzcd} 0 \arrow[r] &\left(T_{\Sym_{\kappa}(X)}\right)_D \arrow[d, "\iota"] \arrow[r] & \left(T_{\mathcal T_g(\kappa)}\right)_{(X, D, m)} \arrow[d, "KS^{\Div}"] \arrow[r] & \left(T_{\mathcal T_g}\right)_{(X, m)} \arrow[d, "KS"] \arrow[r] & 0 \\ 0 \arrow[r] & H^0\left(X, T_X|_{D^{\red}}\right) \arrow [r] & H^1\left(X, T_X\left(-D^{\red}\right)\right) \arrow[r] & H^1\left(X, T_X\right) \arrow[r]& 0 \end{tikzcd}
\end{equation}

By the five lemma, we have :

\begin{lemma}\label{lemma_KS'_iso}
The map $KS^{\Div}$ in \eqref{eq_kodaira-spencer_2} is an isomorphism.
\end{lemma}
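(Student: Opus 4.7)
The plan is to apply the five lemma to the commutative diagram with exact rows displayed just before the lemma statement. Since the outer vertical arrows are $\iota$ and $KS$, it suffices to verify that both are isomorphisms and that the two squares commute; the five lemma then forces $KS^{\Div}$ to be an isomorphism as well.

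That $\iota$ is an isomorphism is a direct identification: around a divisor $D = k_1 x_1 + \cdots + k_r x_r$ with the $x_i$ pairwise distinct, the stratum $\Sym_\kappa(X) \subset \Sym^n(X)$ is smooth of dimension $r$, locally parametrized by the $r$-tuple $(x_1, \dots, x_r)$, so its tangent space at $D$ is canonically $\bigoplus_{i=1}^r (T_X)_{x_i}$, which is exactly $H^0\left(X, T_X|_{D^{\red}}\right)$. That $KS$ is an isomorphism is the classical Kodaira-Spencer theorem for the Teichmüller space $\mathcal T_g$ when $g \ge 2$, recalled in the paragraph just before \eqref{eq_kodaira-spencer_2} and proved in \cite{earle-eells}.

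Commutativity of the right square expresses the fact that the Kodaira-Spencer class of $\mathcal U_g^{\Div}(\kappa)$, computed using vector fields logarithmic along $\mathcal D_g^{\Div}(\kappa)^{\red}$, is sent under the natural inclusion $T_X\left(-D^{\red}\right) \hookrightarrow T_X$ to the Kodaira-Spencer class of the underlying family of curves; this is immediate from the construction, since forgetting the logarithmic condition at the level of the ambient total space corresponds precisely to this inclusion of sheaves. Commutativity of the left square is the only step that is not purely formal: one must show that a tangent vector in $\left(T_{\mathcal T_g(\kappa)}\right)_{(X,D,m)}$ that preserves the complex structure of $X$ but moves the divisor is sent by $KS^{\Div}$ to the class in $H^1\left(X, T_X\left(-D^{\red}\right)\right)$ coming, via the connecting homomorphism associated to $0 \to T_X\left(-D^{\red}\right) \to T_X \to T_X|_{D^{\red}} \to 0$, from the skyscraper section encoding the infinitesimal motion of the marked points. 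I would verify this by choosing a Čech cover trivializing the family $\mathcal U_g^{\Div}(\kappa)$ adapted to the divisor on each fiber, and then comparing the two cocycle computations directly.

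I expect this last verification to be the main obstacle, though it is entirely formal; once it is in hand, the five lemma closes the argument at once, and the equality of dimensions $3g - 3 + r$ on both sides of $KS^{\Div}$ provides a consistency check.
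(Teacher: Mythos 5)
Your proposal follows exactly the paper's own route: the paper sets up the same commutative diagram with exact rows, identifies the outer vertical maps $\iota$ and $KS$ as isomorphisms, and concludes by the five lemma, leaving the commutativity check as "easy to see." Your additional elaboration of why the two squares commute fills in a detail the paper omits, but the argument is the same.
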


\subsection{Tangent Spaces of the Universal Spaces of Branching Classes}\label{subsection_tangent_spaces_univ_spaces_bc}

Let $X$ be a Riemann surface of genus $g \ge 2$, $D$ a divisor on $X$ and $a \in A_X^D$ a branching class.
Let $U$ be an open set in $X$ and let $V \in\Gamma(U, T_X(-D^{\red}))$.
For any $t \in \mathbb R$ near $0$, denote by $\phi^t_V$ the flow of $V$ at time $t$.
Write
$$\chi_{a}(V) = \lim_{t \to 0} \frac{(\phi^t_V)^*a - a}t \in \Gamma(U, K_X|_D)$$
where the pullback of branching classes is the pullback of jets of holomorphic maps (thus the pullback of holomorphic maps).

Let $\zeta$ be a nonconstant holomorphic map on $U$, whose jets at the points of $D|_U$ are representatives of the branching class $a|_U$. In particular, $\zeta$ has branching divisor $D|_U$.
Write $V = f(\zeta)\partial_{\zeta}$, where $f$ is a multivalued holomorphic function.
A straightforward computation shows that $\chi_{a}(V) = f''(\zeta)d\zeta$.
As a consequence, if $D|_U = nx$ with $x \in U$ and if $z$ is a local coordinate centered at $x$ and such that $\zeta = z^{n+1}$, then if $V = (a_1z + a_2z^2 + \cdots)\partial_z$, the section $\chi_{a}(V) \in \Gamma(U, K_X|_D)$ is given in the coordinate $z$ by :
\begin{equation}\label{eq_formule_derivee_classe}
\chi_{a}(V) = \left((n+2)a_2 + 2(n+3)a_3z + 3(n+4)a_4z^2 + \cdots + n(2n+1)a_{n+1}z^{n-1}\right)dz
\end{equation}
In particular, the morphism of sheaves $\chi_{a} : T_X(-D^{\red}) \to K_X|_D$ is surjective.
Denoting by $\Gamma_{a}$ its kernel, one gets an exact sequence:

\begin{equation}\label{definition_gamma_alpha}
0 \to \Gamma_{a} \to T_X\left(-D^{\red}\right) \xrightarrow{\chi_a} K_X|_D \to 0
\end{equation}
and thus in cohomology
\begin{equation}\label{eq_exact_cohom_classes}
0 \to H^0(X, K_X|_D) \to H^1(X, \Gamma_{a}) \to H^1\left(X, T_X\left(-D^{\red}\right)\right) \to 0
\end{equation}

Let $n \in \N$.
Following subsection \ref{subsection_families_bps}, let $\mathcal U_g^{\BC} = \left(\mathcal A_g(n), \mathcal X_g^{\BC}(n), \pi_g^{\BC}(n), m_g^{\BC}(n), \mathcal D_g^{\BC}(n), \mathfrak a_g^{\BC}(n)\right)$ be the universal family of marked branching classes of genus $g$ and degree $n$.
Write $\delta_g(n) : \mathcal A_g(n) \to \mathcal T_g(n)$ the forgetful morphism.
According to subsection \ref{subsection_families_branching_classes}, $\mathcal A_g(n)$ is an affine bundle over $\mathcal T_g(n)$.

For any partition $\kappa$ of the integer $n$, denote by $\mathcal A_g(\kappa)$ the bundle $\mathcal A_g(n) \twoheadrightarrow \mathcal T_g(n)$ restricted to the basis $\mathcal T_g(\kappa) \subset \mathcal T_g(n)$.
Let $\mathcal U_g^{\BC}(\kappa) = \left(\mathcal A_g(\kappa), \mathcal X^{\BC}_g(\kappa), \mathcal D_g^{\BC}(\kappa), \pi_g^{\BC}(\kappa), \mathfrak a_g^{\BC}(\kappa)\right)$ be the pullback by the inclusion $\mathcal A_g(\kappa) \to \mathcal A_g(n)$ of the family $\mathcal U_g^{\BC}(n)$.
Denote by $\delta_g(\kappa)$ the restriction of $\delta_g(n)$ to $\mathcal A_g(\kappa)$.

Let $(X, D, a, m) \in \mathcal A_g(n)$ be a marked curve with divisor of degree $n$ and branching class.
Denote by $\kappa$ the partition of $n$ associated to the divisor $D$.
It is a consequence of proposition \ref{proposition_affine_structure_branching_classes} that the tangent space of the fiber $A_X^D = \delta_g(\kappa)^{-1}(X, D, m)$ at $a$ is $\left(T_{A_X^D}\right)_{a} = H^0(X, K_X|_D)$. 
Moreover we saw in section \ref{subsection_tangent_spaces_univ_spaces_div} that there is a Kodaira-Spencer isomorphism $KS^{\Div} : \left(T_{\mathcal T_g(\kappa)}\right)_{(X, D, m)} \xrightarrow{\sim} H^1\left(X, T_X\left(-D^{\red}\right)\right)$.
As a consequence, similarly to section \ref{subsection_tangent_spaces_univ_spaces_div}, one has identifications between the kernels and cokernels of the exact sequences \eqref{eq_exact_cohom_classes} and
\begin{equation}
0 \to \left(T_{A_X^D}\right)_a \to \left(T_{\mathcal A_g(\kappa)}\right)_{(X, D, a, m)} \to \left(T_{\mathcal T_g(\kappa)}\right)_{(X, D, m)} \to 0
\end{equation}

Let $\mathcal K$ (respectively $\mathcal K^{\rel}$) be the subsheaf of $T_{\mathcal X^{\BC}_g(\kappa)}$ (respectively $T_{\mathcal X^{\BC}_g(\kappa)/\mathcal A_g(\kappa)}$) that contains the vector fields preserving the relative branched atlas $\mathfrak a_g^{\BC}(\kappa)$.
The Kodaira-Spencer exact sequence for the family $\mathcal X^{\BC}_g(\kappa)$, when restricted to the vector fields preserving $\mathfrak a_g^{\BC}(\kappa)$, is given by :

\begin{equation}
0 \to \mathcal K^{\rel} \to \mathcal K \to \pi_g^{\BC}(\kappa)^* T_{\mathcal A_g(\kappa)} \to 0
\end{equation}

Applying the left exact functor $\pi_g^{\BC}(\kappa)_*$, one gets a morphism of sheaves $T_{\mathcal A_g(\kappa)} \to R^1 \pi_g^{\BC}(\kappa)_* \mathcal K^{\rel}$. In particular, at the point $(X, D, a, m)$, this morphism gives a linear map :

\begin{equation}\label{eq_kodaira_spencer_3}
KS^{\BC} : \left(T_{\mathcal A_g(\kappa)}\right)_{(X, D, a, m)} \to H^1\left(X, \Gamma_a\right)
\end{equation}

It is easy to see that the following diagram is commutative :

\begin{equation}
\begin{tikzcd} 0 \arrow[r] &\left(T_{A_X^D}\right)_a \arrow[d, equal] \arrow[r] & \left(T_{\mathcal A_g(\kappa)}\right)_{(X, D, a, m)} \arrow[d, "KS^{\BC}"] \arrow[r] & \left(T_{\mathcal T_g(\kappa)}\right)_{(X, D, m)} \arrow[d, "KS^{\Div}"] \arrow[r] & 0 \\ 0 \arrow[r] & H^0\left(X, K_X|_D\right) \arrow [r] & H^1\left(X, \Gamma_a\right) \arrow[r] & H^1\left(X, T_X\left(-D^{\red}\right)\right) \arrow[r]& 0 \end{tikzcd}
\end{equation}

Since $KS^{\Div}$ is an isomorphism by lemma \ref{lemma_KS'_iso}, we have the following lemma :

\begin{lemma}
The map $KS^{\BC}$ in \eqref{eq_kodaira_spencer_3} is an isomorphism.
\end{lemma}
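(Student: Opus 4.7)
The plan is to deduce the statement directly from the commutative diagram displayed immediately before the lemma, via the five lemma. Once the diagram is available, the left vertical arrow is the identity and the right vertical arrow is an isomorphism by lemma \ref{lemma_KS'_iso}; if the two rows are exact and the two squares commute, the five lemma forces the middle arrow $KS^{\BC}$ to be an isomorphism as well. So the real content of the proof is to establish those four facts.

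First I would check exactness of the two rows. The bottom row is nothing but equation \eqref{eq_exact_cohom_classes}, which was obtained from the short exact sequence \eqref{definition_gamma_alpha} defining $\Gamma_a$ together with the vanishing $H^0\left(X, T_X(-D^{\red})\right) = 0$ valid for $g \ge 2$; exactness on the right comes from the surjectivity of $\chi_a$ already established via the explicit formula \eqref{eq_formule_derivee_classe}. The top row is the tangent sequence at $(X, D, a, m)$ of the affine bundle $\delta_g(\kappa) : \mathcal A_g(\kappa) \twoheadrightarrow \mathcal T_g(\kappa)$ produced in subsection \ref{subsection_families_branching_classes}, the fiber being $A_X^D$; proposition \ref{proposition_affine_structure_branching_classes} identifies its tangent space at $a$ with $H^0(X, K_X|_D)$.

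Next I would verify commutativity of the right square. This is naturality of the Kodaira--Spencer construction with respect to the forgetful map $\delta_g(\kappa)$: the sheaf $\mathcal K$ of infinitesimal automorphisms of the relative atlas $\mathfrak a_g^{\BC}(\kappa)$ sits inside $\left(\pi_g^{\BC}(\kappa)\right)^{*} \gamma_g(\kappa)^{*} T_{\mathcal X_g^{\Div}(\kappa)}\left(-\log \mathcal D_g^{\Div}(\kappa)^{\red}\right)$, and the relative subsheaf $\mathcal K^{\rel}$ lies in $T_{\mathcal X^{\BC}_g(\kappa)/\mathcal A_g(\kappa)}\left(-\mathcal D_g^{\BC}(\kappa)^{\red}\right)$; pushing forward the resulting morphism of short exact sequences and evaluating at $(X, D, a, m)$ produces exactly the square to be checked.

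The more delicate left square is where I expect the main obstacle. There one must show that the identification $\left(T_{A_X^D}\right)_a \simeq H^0(X, K_X|_D)$ coming from the affine structure of proposition \ref{proposition_affine_structure_branching_classes} agrees, after the inclusion $H^0(X, K_X|_D) \hookrightarrow H^1(X, \Gamma_a)$ of \eqref{eq_exact_cohom_classes}, with the restriction of $KS^{\BC}$ to deformations that fix the underlying marked curve with divisor. Concretely, a tangent vector to the fiber is represented by an infinitesimal variation of local charts of the form $f_i \mapsto f_i + t \cdot h_i$ with $h_i$ holomorphic and compatible with the same divisor, which via the definition of $KS^{\BC}$ is represented by the \v{C}ech cocycle of relative vertical vector fields $(h_i / \partial_w f_i)\partial_w$ modulo infinitesimal automorphisms; its image under $\chi_a$ is precisely the jet of $1$-form described by formula \eqref{eq_formule_derivee_classe}, which recovers the derivative of the branching class. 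Once this unwinding is carried out, the left square commutes, and applying the five lemma concludes the proof.
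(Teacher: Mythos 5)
Your proposal follows exactly the paper's argument: the paper establishes the same commutative diagram with exact rows, the equality on the left coming from proposition \ref{proposition_affine_structure_branching_classes}, the isomorphism $KS^{\Div}$ on the right from lemma \ref{lemma_KS'_iso}, and concludes by the five lemma. The only difference is that you spell out the exactness of the rows and the commutativity of the squares, which the paper dismisses as ``easy to see''; your unwinding of the left square via the variation of charts and formula \eqref{eq_formule_derivee_classe} is a correct account of what that verification amounts to.
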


\subsection{Tangent Spaces of the Universal Spaces of Branched Projective Structures}\label{subsection_tangent_spaces_univ_spaces_bps}

Let $X$ be a Riemann surface of genus $g \ge 2$, $D$ a divisor on $X$, $a \in A_X^D$ a branching class and $p \in P_X^a$ a branched projective structure.
Let $U$ be an open subset of $X$ and let $V \in\Gamma(U, \Gamma_a)$, where $\Gamma_a$ is defined in \eqref{definition_gamma_alpha}.
For any $t \in \R$ that is near $0$, the flow of $V$ at time $t$ is denoted by $\phi^t_V$.
Note that, by definition of $\Gamma_a$, $V \in \Gamma_a$ means that for all $t$ near $0$ one has $\left(\phi_V^t\right)^* a - a = 0$. Write
\begin{equation}
\xi_{p}(V) = \lim_{t \to 0} \frac{(\phi^t_V)^*p - p}t \in \Gamma\left(U, K_X^{\otimes 2}(-D)\right)
\end{equation}

Let $\zeta$ be a chart on $U$ of the branched projective structure $p$.
Write $V = f(\zeta)\partial_{\zeta}$, where $f$ is a multivalued holomorphic function.
A straightforward computation shows that $\xi_p(V) = f'''(\zeta)d\zeta^{\otimes 2}$.
Suppose $D|_U = nx$ with $x \in U$ and let $z$ be a coordinate centered at $x$ such that $\zeta = z^{n+1}$.
Write $V = (a_1z + a_2z^2 + \cdots)\partial_z$.
Then $$\xi_p(V) = \left(\sum_{k \ge 1} k(k + n+1)(k - (n+1))a_{k+1}z^{k-2}\right)dz^{\otimes 2}$$
According to equation \eqref{eq_formule_derivee_classe}, $V$ is a section of $\Gamma_a$ if and only if $a_2 = a_3 = \dots = a_{n+1} = 0$. As a consequence :
\begin{equation}\label{eq_xi_p}
\xi_p(V) = \left(\sum_{l \ge n}(l+2)(l+n+3)(l-n+1)a_{l+3}z^l\right)dz^{\otimes 2}
\end{equation}

Since for any $l \ge n$, $(l+2)(l+n+3)(l-n+1) \neq 0$, the morphism of sheaves $\xi_p : \Gamma_a \to K_X^{\otimes 2}(-D)$ is surjective.
Denote by $\Lambda_p$ its kernel.
It is the sheaf of infinitesimal automorphisms of $p$.
There is an exact sequence :

\begin{equation}
0 \to \Lambda_p \to \Gamma_a \xrightarrow{\xi_p} K_X^{\otimes 2}(-D) \to 0
\end{equation}
which gives, by taking cohomologies, an exact sequence :
\begin{equation}\label{eq_exact_cohom_struct}
0 \to H^0\left(X, K_X^{\otimes 2}(-D)\right) \to H^1(X, \Lambda_p) \to H^1\left(X, \Gamma_a\right) \to H^1\left(X, K_X^{\otimes 2}(-D)\right) \to H^2(X, \Lambda_p) \to 0
\end{equation}

For any $n \in \N$, according to subsection \ref{subsection_families_bps}, write $\mathcal U_g^{\BPS}(n) = \bigl(\mathcal P_g(n), \mathcal X_g^{\BPS}(n), \pi_g^{\BPS}(n), m_g^{\BPS}(n),\break \mathcal D_g^{\BPS}(n), \mathfrak a_g^{\BPS}(n), \mathfrak p_g^{\BPS}(n)\bigr)$ the universal family of marked branched projective structures of genus $g$ and degree $n$.
In particular $\mathcal P_g(n)$ is the moduli space of marked branched projective structures of degree $n$ and genus $g$.
Denote by $\alpha_g(n) : \mathcal P_g(n) \to \mathcal A_g(n)$ the forgetful map.

\subsubsection{Case $n < 2g-2$ or $n > 4g-4$}\label{subsubsection_singular_points_small_high_degree}

Let us first handle the case, corresponding to the framework of proposition \ref{proposition_univ_family_bps_low_high_deg}, where the branching degree $n$ and the genus $g$ verify $n < 2g-2$ or $n > 4g-4$.

In this subsubsection we prove the following proposition, which, combined with the study of subsection \ref{subsection_infinitesimal_automorphisms}, shows theorem \ref{theorem_singular_points}.

\begin{proposition}\label{proposition_singular_points_small_high_degree}
Take $g \ge 2$ and $n \in \N$.
\begin{itemize}
\item[(i)] If $n < 2g-2$, the analytic space $\mathcal P_g(n)$ is smooth.
\item[(ii)] If $n > 4g-4$ and $p \in \mathcal P_g(n)$, $p$ is a smooth point of $\mathcal P_g(n)$ if and only if its sheaf $\Lambda_p$ of infinitesimal automorphisms satisfies $H^2\left(X, \Lambda_p\right) = 0$.
\end{itemize}
\end{proposition}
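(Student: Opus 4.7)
The plan is to read both claims directly from proposition \ref{proposition_univ_family_bps_low_high_deg}, applied with $\mathcal G = \mathcal U_g^{\BC}(n)$ so that $\mathcal P_{\mathcal X}^{\mathfrak a} = \mathcal P_g(n)$ and the base $T = \mathcal A_g(n)$. For part (i), proposition \ref{proposition_univ_family_bps_low_high_deg}(i) presents $\mathcal P_g(n) \to \mathcal A_g(n)$ as an affine bundle. Since $\mathcal A_g(n)$ is itself an affine bundle (by proposition \ref{proposition_universal_family_bc_over_divisor}) over $\mathcal T_g(n)$, which is a smooth fiber bundle over the Teichmüller space $\mathcal T_g$ with smooth symmetric-power fibers, composing these three smooth bundle structures shows that $\mathcal P_g(n)$ is a complex manifold.

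For part (ii), proposition \ref{proposition_univ_family_bps_low_high_deg}(ii) realizes $\mathcal P_g(n)$ as the vanishing locus inside the smooth manifold $\mathcal A_g(n)$ of the section $\mathfrak c(\mathfrak a)$ of the vector bundle $R^1\pi_*\bigl(K^{\otimes 2}_{\mathcal X/\mathcal A_g(n)}(-\mathcal D)\bigr)$, whose fiber at $(X,D,a,m)$ is $H^1\bigl(X, K_X^{\otimes 2}(-D)\bigr)$ (of constant rank $n-3g+3$ by Riemann-Roch, since $H^0 = 0$). The Zariski tangent space at a point $p$ lying over $(X,D,a,m)$ is then the kernel of the fiber derivative
\[
d\mathfrak c|_p : \bigl(T_{\mathcal A_g(n)}\bigr)_{(X,D,a,m)} \longrightarrow H^1\bigl(X, K_X^{\otimes 2}(-D)\bigr).
\]
Via the Kodaira-Spencer isomorphism $KS^{\BC}$ of subsection \ref{subsection_tangent_spaces_univ_spaces_bc}, the goal is to identify $d\mathfrak c|_p$ with the map $\xi_p^\sharp : H^1(X, \Gamma_a) \to H^1\bigl(X, K_X^{\otimes 2}(-D)\bigr)$ induced in cohomology by the sheaf morphism $\xi_p$ of subsection \ref{subsection_tangent_spaces_univ_spaces_bps}. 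Granted this, the exact sequence \eqref{eq_exact_cohom_struct} combined with $H^0\bigl(X, K_X^{\otimes 2}(-D)\bigr) = 0$ yields a canonical isomorphism $\bigl(T_{\mathcal P_g(n)}\bigr)_p \cong H^1(X, \Lambda_p)$, and $h^1(X, \Gamma_a) = \dim \mathcal A_g(n) = 3g-3+2n$ together with Riemann-Roch gives
\[
\dim_\C \bigl(T_{\mathcal P_g(n)}\bigr)_p = h^1(X, \Lambda_p) = (6g-6+n) + h^2(X, \Lambda_p).
\]
Since $\mathcal P_g(n)$ is cut out in $\mathcal A_g(n)$ by a section of a rank $n-3g+3$ bundle, its local dimension at $p$ is at least $6g-6+n$, so smoothness at $p$ amounts to $\dim T_p = 6g-6+n$, i.e.\ to $h^2(X, \Lambda_p) = 0$.

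The hardest step is the identification $d\mathfrak c|_p = \xi_p^\sharp \circ KS^{\BC}$. I would prove it at the Čech level: take a cover $(\mathcal U_i)$ of $\mathcal X$ equipped with local relative branched projective structures $\mathfrak p_i$ of branching class $\mathfrak a$, so that $\mathcal C(\mathfrak a)$ is represented by the cocycle $(\mathfrak p_j - \mathfrak p_i)_{ij}$; then differentiate along the flow of Čech lifts $(V_{ij})$ representing a class $[V] \in H^1(X, \Gamma_a)$, and recognize the resulting $\check C^1$-cocycle as $\xi_p([V])$ via the explicit formula \eqref{eq_xi_p}. A subsidiary point, needed to close the direction \emph{$h^2(X, \Lambda_p) > 0 \Rightarrow p$ is singular}, is that $\dim_p \mathcal P_g(n)$ is in fact exactly $6g-6+n$ at every $p$; this can be settled by upper semicontinuity of $p \mapsto h^1(X, \Lambda_p)$ (Grauert) together with the density, in any neighborhood of $p$, of points with $h^2 = 0$, i.e.\ with nonelementary holonomy.
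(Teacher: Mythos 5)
Part (i) and the overall strategy for part (ii) — realizing $\mathcal P_g(n)$ as the zero locus of the section $\mathfrak c_g(n)$ of the rank-$(n-3g+3)$ bundle over $\mathcal A_g(n)$ and computing the intrinsic derivative via Kodaira--Spencer — coincide with the paper's proof. But there is a genuine gap in how you use $KS^{\BC}$. The Kodaira--Spencer isomorphism of subsection \ref{subsection_tangent_spaces_univ_spaces_bc} identifies $H^1(X,\Gamma_a)$ with the tangent space of the \emph{stratum} $\mathcal A_g(\kappa)$, not of $\mathcal A_g(n)$: indeed $h^1(X,\Gamma_a)=h^0(X,K_X|_D)+h^1(X,T_X(-D^{\red}))=n+(3g-3+r)$, where $r$ is the number of distinct points of $D$, so your equality $h^1(X,\Gamma_a)=\dim\mathcal A_g(n)=3g-3+2n$ holds only when $D$ is reduced. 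Consequently the identification $d\mathfrak c|_p=\xi_p^\sharp\circ KS^{\BC}$ only describes the \emph{restriction} of $d\mathfrak c|_p$ to $T_{(X,D,a,m)}\mathcal A_g(\kappa)$, and your conclusion $T_p\mathcal P_g(n)\cong H^1(X,\Lambda_p)$, together with the dimension formula $h^1(X,\Lambda_p)=(6g-6+n)+h^2(X,\Lambda_p)$ (which in general reads $6g-6+r+h^2$), fails off the open stratum of reduced divisors.

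This asymmetry matters differently for the two implications. For ``$H^2(X,\Lambda_p)=0\Rightarrow p$ smooth'' the argument survives: surjectivity of the restricted differential (read off from the vanishing cokernel $H^2(X,\Lambda_p)$ in \eqref{eq_exact_cohom_struct}) forces surjectivity of the full differential $d\mathfrak c_g(n)$, which is exactly how the paper concludes. For the converse, knowing that the \emph{restricted} differential has nonzero cokernel does not show that the full differential is non-surjective, so for non-reduced $D$ you have not shown that $\dim T_p\mathcal P_g(n)>6g-6+n$. The paper closes this case by a separate argument: it treats only reduced $D$ directly, and for general $p$ with $h^2(X,\Lambda_p)>0$ it invokes the result of \cite{calsamiglia-deroin-francaviglia} that $p$ is a limit of structures with the same holonomy (hence the same $h^2$, by subsection \ref{subsection_infinitesimal_automorphisms}) and reduced branching divisor, and concludes by closedness of the singular locus. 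You need to add either this degeneration argument or a genuine computation of $d\mathfrak c|_p$ on all of $T_{(X,D,a,m)}\mathcal A_g(n)$. Your subsidiary point about pinning down $\dim_p\mathcal P_g(n)=6g-6+n$ is a legitimate concern (the paper is terse there too), and your proposed resolution by semicontinuity plus density of nonelementary points is reasonable.
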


\begin{proof}
In the case $n<2g-2$, according to proposition \ref{proposition_univ_family_bps_low_high_deg}, $\mathcal P_g(n)$ is an affine bundle on the smooth variety $\mathcal A_g(n)$, thus it is smooth.
The case $n > 4g-4$ is more subtle, it is the reason why we introduced the Kodaira-Spencer machinery.

From now on, suppose $n > 4g-4$.
Consider the vector bundle on $\mathcal A_g(n)$ whose fiber over a point $(X, D, a, m)$ is $H^1\left(X, K_X^{\otimes 2}(-D)\right)$, and denote by $\mathfrak c_g(n) = \mathfrak c\left(\mathfrak a_g^{\BC}(n)\right)$ its global section defined in \eqref{eq_section_obstruction}.
By proposition \ref{proposition_univ_family_bps_low_high_deg}, the map $\alpha_g(n)$ is an embedding and thus identifies $\mathcal P_g(n)$ with an analytic subspace of $\mathcal A_g(n)$.
More precisely, we saw in the proof of proposition \ref{proposition_univ_family_bps_low_high_deg} that the subspace $\mathcal P_g(n) \subset \mathcal A_g(n)$ is given by the zeroes of the section $\mathfrak c_g(n)$.
In particular, if $(X, D, a, m) \in \mathcal P_g(n) \subset \mathcal A_g(n)$, one has $\mathfrak c_g(n)(X, D, a, m) = 0$.
In that case, the differential $\left(d \mathfrak c_g(n) \right)_{(X, D, a, m)}$ is a linear map from $\left(T_{\mathcal A_g(n)}\right)_{(X, D, a, m)}$ to $H^1\left(X, K_X^{\otimes 2}(-D)\right)$.
We will show that this differential is surjective for any $p \in \mathcal A_g(n)$ such that $H^2\left(X, \Lambda_p\right)=0$, which implies smoothness of $\mathcal P_g(n)$ at $p$.

For any partition $\kappa$ of the integer $n$, denote by $\mathcal P_g(\kappa)$ the restriction of the bundle $\mathcal P_g(n) \twoheadrightarrow \mathcal A_g(n)$ to the basis $\mathcal A_g(\kappa) \subset \mathcal A_g(n)$.
Let $\mathcal U_g^{\BPS}(\kappa) = \left(\mathcal P_g(\kappa), \mathcal X^{\BPS}_g(\kappa), \pi_g^{\BPS}(\kappa), m_g^{\BPS}(\kappa), \mathcal D_g^{\BPS}(\kappa), \mathfrak a_g^{\BPS}(\kappa)\right)$ be the pullback of the family $\mathcal U_g^{\BPS}(n)$  by the inclusion $\mathcal P_g(\kappa) \to \mathcal P_g(n)$.
Denote by $\alpha_g(\kappa)$ the restriction of $\alpha_g(n)$ to $\mathcal P_g(\kappa)$.

Let $(X, D, a, p, m) \in \mathcal A_g(n)$ be a marked curve with divisor of degree $n$, branching class and branched projective structure.
Let $\kappa$ be the partition of $n$ associated to the divisor $D$.
Proposition \ref{proposition_space_bps_given_bc} implies that the tangent space of the fiber $P_X^{a} = \alpha_g(\kappa)^{-1}(X, D, a, m)$ at $p$ is $\left(T_{P_X^a}\right)_p = H^0\left(X, K_X^{\otimes 2}(-D)\right)$.
Moreover, as we saw in section \ref{subsection_tangent_spaces_univ_spaces_bc}, there is a Kodaira-Spencer isomorphism $KS^{\BC} : \left(T_{\mathcal A_g(\kappa)}\right)_{(X, D, a, m)} \xrightarrow{\sim} H^1\left(X, \Gamma_a\right)$.
Thus the first (respectively the third) terms of the two exact sequences \eqref{eq_exact_cohom_struct} and
\begin{equation}
0 \to \left(T_{P_X^a}\right)_p \to \left(T_{\mathcal P_g(\kappa)}\right)_{(X, D, a, p, m)} \to \left(T_{\mathcal A_g(\kappa)}\right)_{(X, D, a, m)} \xrightarrow{d\mathfrak c_g(\kappa)_{(X, D, a, m)}} H^1\left(X, K_X^{\otimes 2}(-D)\right)
\end{equation}
are identified.

Let $\mathcal G$ (respectively $\mathcal G^{\rel}$) be the subsheaf of $T_{\mathcal X_g^{\BPS}(\kappa)}$ (respectively $T_{\mathcal X_g^{\BPS}(\kappa)/\mathcal P_g(\kappa)}$) of vector fields preserving the relative branched projective structure $\mathfrak p_g(n)$.
The Kodaira-Spencer exact sequence for the family $\mathcal X^{\BPS}_g(\kappa)$, when restricted to vector fields preserving $\mathfrak p_g(n)$, is given by :
\begin{equation}
0 \to \mathcal G^{\rel} \to \mathcal G \to \pi_g^{\BPS}(\kappa)^* T_{\mathcal P_g(\kappa)} \to 0
\end{equation}

By applying the left exact functor $\pi_g^{\BPS}(\kappa)_*$, one gets a morphism of sheaves $T_{\mathcal P_g(\kappa)} \to R^1 \pi_g^{\BPS}(\kappa)_* \mathcal G^{\rel}$.
In particular, at the point $(X, D, a, p, m)$, one gets a linear map
\begin{equation}\label{eq_kodaira_spencer_4}
KS^{\BPS} : \left(T_{\mathcal P_g(\kappa)}\right)_{(X, D, a, p, m)} \to H^1\left(X, \Lambda_p\right)
\end{equation}

Since $n > 4g-4$, $\left(T_{P_X^a}\right)_p = H^0\left(X, K_X^{\otimes 2}(-D)\right) = 0$.
Thus one has the following diagram, that is easily seen to be commutative :

\begin{equation}
\begin{tikzcd}
 0 \arrow[r] & \left(T_{\mathcal P_g(\kappa)}\right)_{(X, D, a, p, m)} \arrow[d, "KS^{\BPS}"] \arrow[r] & \left(T_{\mathcal A_g(\kappa)}\right)_{(X, D, a, m)} \arrow[d, "KS^{\BC}"] \arrow[r, "d\mathfrak c_g(\kappa)"] & H^1\left(X, K_X^{\otimes 2}(-D)\right) \arrow[d, equal]& \\ 0  \arrow [r] & H^1\left(X, \Lambda_p\right) \arrow[r] & H^1\left(X, \Gamma_a\right) \arrow[r]& H^1\left(X, K_X^{\otimes 2}(-D)\right) \arrow[r]& H^2\left(X, \Lambda_p\right)
\end{tikzcd}
\end{equation}

Since $KS^{\BC}$ is an isomorphism, one has :

\begin{lemma}
The map $KS^{\BPS}$ in \eqref{eq_kodaira_spencer_4} is an isomorphism.
Moreover the differential $d\mathfrak c_g(\kappa)$ is surjective at any point $p$ of $\mathcal P_g(\kappa)$ such that $H^2\left(X, \Lambda_p\right)=0$.
\end{lemma}

Let $p \in \mathcal P_g(\kappa)$ be such that $H^2\left(X, \Lambda_p\right) = 0$.
Since the restricted differential at $d\mathfrak c_g(\kappa)$ is surjective at $p$, the total differential $d\mathfrak c_g(n)$  is \emph{a fortiori} surjective at $p$.
This implies that $p$ is a smooth point of $\mathcal P_g(n) \subset \mathcal A_g(n)$.
Reciprocally, if $H^2\left(X, \Lambda_p\right) \neq 0$ and $D$ is reduced, then $h^1\left(X, \Lambda_p\right) = 6g-6+n+h^2\left(X, \Lambda_p\right) > 6g-6+n$, so $p$ is singular.

Finally, let $p \in \mathcal P_g(n)$ be any branched projective structure such that $h^2\left(X, \Lambda_p\right) > 0$.
It is shown in \cite{calsamiglia-deroin-francaviglia} that $p$ is the limit in $\mathcal P_g(n)$ of a sequence $\left(p_n\right)_{n \in \N}$ of branched projective structures with the same holonomy as $p$ and reduced branching divisor.
By subsection \ref{subsection_infinitesimal_automorphisms}, the dimension of $H^2\left(X, \Lambda_p\right)$ depends only on the holonomy of $\rho$, thus the $p_n$ are all singular points of $\mathcal P_g(n)$, and since the singular locus is closed, $p$ is itself a singular point.
\end{proof}

\subsubsection{General Case}\label{subsubsection_singular_points_general_case}

In order to address the case where $2g-2 \le n \le 4g-4$, we need to have a closer look at the point of view on $\mathcal P_g(n)$ given in subsection \ref{subsection_families_rbps}, and to work with restricted branched projective structures.

Let $X$ be a Riemann surface of genus $g \ge 2$, $D$ and $D'$ divisors on $X$, $a \in A_X^D$ a branching class and $r \in P_{X, D'}^a$ a jet of branched projective structure.
Let $U$ be an open subset of $X$ and let $V \in\Gamma(U, \Gamma_a)$, where $\Gamma_a$ is defined in \eqref{definition_gamma_alpha}.
For any $t \in \R$ that is near $0$, the flow of $V$ at time $t$ is still denoted by $\phi^t_V$.
Write
\begin{equation}
\widetilde \xi_r(V) = \lim_{t \to 0} \frac{(\phi^t_V)^*r - r}t \in \Gamma\left(U, K_X^{\otimes 2}(-D)|_{D'}\right)
\end{equation}
According to equation \ref{eq_xi_p}, the morphism of sheaves $\widetilde \xi_r : \Gamma_a \to K_X^{\otimes 2}(-D)|_{D'}$ is surjective.
Denote by $\widetilde \Lambda_r$ its kernel, so that there is an exact sequence :

\begin{equation}
0 \to \widetilde \Lambda_r \to \Gamma_a \xrightarrow{\widetilde \xi_r} K_X^{\otimes 2}(-D)|_{D'} \to 0
\end{equation}
which gives, by taking cohomologies, an exact sequence :
\begin{equation}\label{eq_exact_cohom_classes_jets}
0 \to H^0\left(X, K_X^{\otimes 2}(-D)|_{D'}\right) \to H^1\left(X, \widetilde \Lambda_r\right) \to H^1(X, \Gamma_a) \to 0
\end{equation}

Fix $n \in \N$.
Let $\mathfrak D$ be a divisor on $\mathcal X_g^{\BC}(n)$ such that $\mathfrak D$ contains no fiber of the map $\pi_g^{\BC}(n): \mathcal X_g^{\BC}(n) \to \mathcal A_g(n)$.
Let $\mathfrak n$ be the degree of the restriction of $\mathfrak D$ to the fibers of $\mathcal X_g^{\BC}(n)$.
Choose $\mathfrak D$ such that $n + \mathfrak n > 4g-4$.

Applying lemma \ref{lemma_universal_family_rbps_over_bc}, let us write $\mathcal U_g\left(n, \mathfrak D\right) = \bigl(\mathcal P_g(n, \mathfrak D), \mathcal X_g(n, \mathfrak D), \pi_g(n, \mathfrak D), m_g(n, \mathfrak D), \mathcal D_g(n, \mathfrak D),\break \mathfrak a_g(n, \mathfrak D), \mathfrak D', \mathfrak r_g(n, \mathfrak D)\bigr)$ the universal family of restricted branched projective structures over the family of branching classes with auxiliary divisors $\left(\mathcal A_g(n), \mathcal X_g^{\BC}(n), \pi_g^{\BC}(n), m_g^{\BC}(n), \mathcal D_g^{\BC}(n), \mathfrak a_g^{\BC}(n), \mathfrak D\right)$.
Let $\overline \alpha_g(n, \mathfrak D) : \mathcal P_g(n, \mathfrak D) \to \mathcal A_g(n)$ be the forgetful morphism.
It is an affine bundle by subsection \ref{subsection_families_rbps}.

For any partition $\kappa$ of the integer $n$, denote by $\mathcal P_g(\kappa, \mathfrak D)$ the restriction of the bundle $\overline \alpha_g(n, \mathfrak D) : \mathcal P_g(n, \mathfrak D) \to \mathcal A_g(n)$ to the basis $\mathcal A_g(\kappa) \subset \mathcal A_g(n)$.
Let $\mathcal U_g(\kappa, \mathfrak D) = (\mathcal P_g(\kappa, \mathfrak D), \mathcal X_g(\kappa, \mathfrak D), \mathcal D_g(\kappa, \mathfrak D),\break \pi_g(\kappa, \mathfrak D), \mathfrak a_g(\kappa, \mathfrak D), \mathfrak D'(\kappa), \mathfrak r_g(\kappa, \mathfrak D))$ be the pullback by the inclusion $\mathcal P_g(\kappa, \mathfrak D) \to \mathcal P_g(n, \mathfrak D)$ of the family $\mathcal U_g(n, \mathfrak D)$.
Denote by $\overline \alpha_g(\kappa, \mathfrak D)$ the restriction of $\overline \alpha_g(n, \mathfrak D)$ to $\mathcal P_g(\kappa, \mathfrak D)$.

Let $r \in \mathcal P_g(n, \mathfrak D)$.
Let $(X, D, a, m) \in \mathcal A_g(n)$ be the marked curve with divisor of degree $n$ and branching class, image of $r$ by the map $\overline \alpha_g(n, \mathfrak D)$.
Let $D'$ be the divisor of degree $\mathfrak n$ on $X$, restriction of $\mathfrak D$ to the fiber of $\mathcal X_g^{\BC}(n)$ over $(X, D, a, m)$.
Denote by $\kappa$ the partition of $n$ associated to the divisor $D$.
The tangent space of the fiber $P_{X, D'}^a = \overline \alpha(\kappa, \mathfrak D)^{-1}(X, D, a, m)$ at $r$ is $\left(T_{P_{X, D'}^a}\right)_r = H^0(X, K_X^{\otimes 2}(-D)|_{D'})$, according to proposition \ref{prop_struct_space_jets_bps}.

Moreover we saw that there is a Kodaira-Spencer isomorphism $KS^{\BC} : \left(T_{\mathcal A_g(\kappa)}\right)_{(X, D, a, m)} \xrightarrow{\sim} \break H^1\left(X, \Gamma_a\right)$.
As a consequence, one has identifications between the kernels and cokernels of the exact sequences \eqref{eq_exact_cohom_classes_jets} and
\begin{equation}
0 \to \left(T_{P_{X, D'}^a}\right)_r \to \left(T_{\mathcal P_g(\kappa, \mathfrak D)}\right)_r \to \left(T_{\mathcal A_g(\kappa)}\right)_{(X, D, a, m)} \to 0
\end{equation}

Let $\mathcal G_{\mathfrak D}$ (respectively $\mathcal G_{\mathfrak D}^{\rel}$) be the subsheaf of $\mathcal T_{\mathcal X_g(\kappa, \mathfrak D)}$ (respectively $\mathcal T_{\mathcal X_g(\kappa, \mathfrak D)/\mathcal P_g(\kappa, \mathfrak D)}$) that contains the vector fields preserving the restricted relative projective structure $\mathfrak r_g(n, \mathfrak D)$.
The Kodaira-Spencer exact sequence for the family $\mathcal X_g(\kappa, \mathfrak D)$, when restricted to the vector fields preserving $\mathfrak r_g(\kappa, \mathfrak D)$, is given by :

\begin{equation}
0 \to \mathcal G_{\mathfrak D}^{\rel} \to \mathcal G_{\mathfrak D} \to \pi_g(\kappa, \mathfrak D)^* T_{\mathcal P_g(\kappa, \mathfrak D)} \to 0
\end{equation}
Applying the left exact functor $\pi_g(\kappa, \mathfrak D)_*$, one gets a morphism of sheaves $T_{\mathcal P_g(\kappa, \mathfrak D)} \to R^1 \pi_g(\kappa, \mathfrak D)_* \mathcal G_{\mathfrak D}^{\rel}$.
In particular, at the point $r$, this morphism gives a linear map

\begin{equation}\label{eq_kodaira_spencer_D}
KS_{\mathfrak D} : \left(T_{\mathcal P_g(\kappa, \mathfrak D)}\right)_r \to H^1\left(X, \widetilde \Lambda_r\right)
\end{equation}

It is easy to see that the following diagram is commutative :

\begin{equation}
\begin{tikzcd} 0 \arrow[r] &\left(T_{P_{X, D'}^a}\right)_r \arrow[d, equal] \arrow[r] & \left(T_{\mathcal P_g(\kappa, \mathfrak D)}\right)_r \arrow[d, "KS_{\mathfrak D}"] \arrow[r] & \left(T_{\mathcal A_g(\kappa)}\right)_{(X, D, a, m)} \arrow[d, "KS^{\BC}"] \arrow[r] & 0 \\ 0 \arrow[r] & H^0\left(X, K_X^{\otimes 2}(-D)|_{D'}\right) \arrow [r] & H^1\left(X, \widetilde \Lambda_r\right) \arrow[r] & H^1\left(X, \Gamma_a\right) \arrow[r]& 0 \end{tikzcd}
\end{equation}

Since $KS^{\BC}$ is an isomorphism, we have the following lemma :

\begin{lemma}
The map $KS_{\mathfrak D}$ in \eqref{eq_kodaira_spencer_D} is an isomorphism.
\end{lemma}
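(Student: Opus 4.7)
The plan is to apply the five lemma to the commutative diagram displayed just before the lemma statement. I already have a short exact sequence of tangent spaces (top row) coming from the fact that $\overline \alpha_g(\kappa, \mathfrak D) : \mathcal P_g(\kappa, \mathfrak D) \to \mathcal A_g(\kappa)$ is an affine bundle whose fiber over $(X, D, a, m)$ is $P_{X, D'}^a$, together with the long exact sequence \eqref{eq_exact_cohom_classes_jets} in cohomology (bottom row). The left vertical arrow is the canonical identification provided by proposition \ref{prop_struct_space_jets_bps}, and the right vertical arrow is the Kodaira--Spencer isomorphism $KS^{\BC}$ established in subsection \ref{subsection_tangent_spaces_univ_spaces_bc}. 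Thus the only thing left to verify is that the diagram commutes, after which the five lemma gives that $KS_{\mathfrak D}$ is an isomorphism.

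The first step is the commutativity of the right-hand square. This follows from the functoriality of the Kodaira--Spencer construction with respect to the forgetful morphism $\overline \alpha_g(\kappa, \mathfrak D)$. Indeed, the sheaf $\mathcal G_{\mathfrak D}^{\rel}$ of relative vector fields preserving the restricted projective structure $\mathfrak r_g(\kappa, \mathfrak D)$ is a subsheaf of the sheaf $\mathcal K^{\rel}$ used to build $KS^{\BC}$ (after pulling back to $\mathcal X_g(\kappa, \mathfrak D)$), and the inclusion induces exactly the morphism $H^1(X, \widetilde \Lambda_r) \to H^1(X, \Gamma_a)$ on the fiber over $r$ by base change for the higher direct image. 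Tracing a tangent vector at $r$ through both paths produces the same cocycle up to the identification $\Gamma_a \hookrightarrow T_X(-D^{\red})$.

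For the commutativity of the left-hand square, I would unfold definitions on a fiber: a vertical tangent vector at $r$ corresponds to an element $q \in H^0(X, K_X^{\otimes 2}(-D)|_{D'})$ via the affine-space structure given by proposition \ref{prop_struct_space_jets_bps}, and $KS_{\mathfrak D}$ sends it to the class of the corresponding infinitesimal deformation of $\mathfrak r_g(\kappa, \mathfrak D)$ in $H^1(X, \widetilde \Lambda_r)$. Since this deformation is purely along the fiber $P_{X, D'}^a$ (it changes $\mathfrak r$ but not $(X, D, a, m)$), its image in $H^1(X, \widetilde \Lambda_r)$ lies in the subgroup $H^0(X, K_X^{\otimes 2}(-D)|_{D'})$ appearing in \eqref{eq_exact_cohom_classes_jets}, and coincides with $q$ under the identification of this subgroup with $\ker$ of the connecting map.

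The hard part, and really the only nontrivial content, will be verifying the commutativity of the left square cleanly — that the infinitesimal Schwarzian-type derivative $\widetilde \xi_r$ used to define $\widetilde \Lambda_r$ is compatible with the affine structure on the fibers of $\overline \alpha_g(\kappa, \mathfrak D)$. Once this compatibility is established (by the same local computation as in the proof of lemma \ref{lemma_relative_bps_schwarzian_derivative}, applied at the infinitesimal level along $\mathfrak D'$), commutativity is immediate and the five lemma finishes the argument.
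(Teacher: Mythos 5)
Your proposal is correct and follows essentially the same route as the paper: both compare the short exact sequence of tangent spaces coming from the affine bundle $\overline \alpha_g(\kappa, \mathfrak D)$ with the cohomology sequence \eqref{eq_exact_cohom_classes_jets} via the commutative square with outer vertical arrows the identity and $KS^{\BC}$, and conclude by the five lemma. The paper simply asserts the commutativity of the diagram as ``easy to see,'' whereas you sketch a justification, so if anything your write-up supplies slightly more detail than the original.
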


Since $n + \mathfrak n > 4g-4$, one has a vector bundle on $\mathcal P_g(n, \mathfrak D)$ whose fiber over a point $r \in P_{X, D'}^a$ is $H^1\left(X, K_X^{\otimes 2}(-D-D')\right)$. 
Denote by $\mathfrak c_g(n, \mathfrak D) = \overline {\mathfrak c}(\mathfrak a_g(n, \mathfrak D), \mathfrak r_g(n, \mathfrak D))$ its global section defined in section \ref{subsection_families_rbps}.
According to section \ref{subsection_families_rbps}, the space $\mathcal P_g(n)$ is an analytic subspace of $\mathcal P_g(n, \mathfrak D)$, namely the vanishing locus of the section $\mathfrak c_g(n, \mathfrak D)$.
In particular, for any $p \in \mathcal P_g(n)$, the differential $\left(d \mathfrak c_g(n, \mathfrak D) \right)_p$ is a linear map from $\left(T_{\mathcal P_g(n, \mathfrak D)}\right)_p$ to $H^1\left(X, K_X^{\otimes 2}\left(-D-D'\right)\right)$, where $X$ is the curve underlying $p$, $D$ is the branching divisor of $p$ and $D'$ is the auxiliary divisor.
We will show that this differential is surjective for any $p \in \mathcal P_g(n)$ such that $H^2\left(X, \Lambda_p\right)=0$, which implies smoothness of $\mathcal P_g(n)$ at $p$.

For any partition $\kappa$ of the integer $n$, $\mathcal P_g(\kappa)$ is the intersection of the subspace $\mathcal P_g(n) \subset \mathcal P_g(n, \mathfrak D)$ with $\mathcal P_g(\kappa, \mathfrak D) \subset \mathcal P_g(n, \mathfrak D)$.
Recall that the family $\mathcal U_g^{\BPS}(\kappa)$ is the pullback of the family $\mathcal U_g^{\BPS}(n)$  by the inclusion $\mathcal P_g(\kappa) \to \mathcal P_g(n)$.

Let $(X, D, a, p, m) \in \mathcal P_g(n)$ be a marked curve with divisor of degree $n$, branching class and branched projective structure.
Let $\kappa$ be the partition of $n$ associated to the divisor $D$.
Denote by $r$ the branched projective structure $p$, seen as an element of $\mathcal P_g(\kappa, \mathfrak D)$.

Denote by $\mathcal G$ (respectively $\mathcal G^{\rel}$) the subsheaf of $T_{\mathcal X_g^{\BPS}(\kappa)}$ (respectively $T_{\mathcal X_g^{\BPS}(\kappa)/\mathcal P_g(\kappa)}$) containing vector fields preserving the relative branched projective structure $\mathfrak p_g(\kappa)$.
The Kodaira-Spencer exact sequence for the family $\mathcal X_g(\kappa, \mathfrak D)$, when restricted to vector fields preserving $\mathfrak p_g(\kappa)$, is given by :
\begin{equation}
0 \to \mathcal G^{\rel} \to \mathcal G \to \pi_g^{\BPS}(\kappa)^* T_{\mathcal P_g(\kappa)} \to 0
\end{equation}
By applying the left exact functor $\pi_g^{\BPS}(\kappa)_*$, one gets a morphism of sheaves $T_{\mathcal P_g(\kappa)} \to R^1 \pi_g^{\BPS}(\kappa)_* \mathcal G^{\rel}$.
At the point $(X, D, a, p, m)$, one gets a linear map
\begin{equation}\label{eq_kodaira_spencer_3bis}
KS^{\BPS} : \left(T_{\mathcal P_g(\kappa)}\right)_{(X, D, a, p, m)} \to H^1\left(X, \Lambda_p\right)
\end{equation}

It is easy to see that the following diagram is commutative, where $D'$ is the auxiliary divisor for the branching class $a$ and $r$ is $p$ restricted to $D'$ :

\begin{equation}
\begin{tikzcd}[column sep = small]
 0 \arrow[r] & \left(T_{\mathcal P_g(\kappa)}\right)_{(X, D, a, p, m)} \arrow[d, "KS^{\BPS}"] \arrow[r] & \left(T_{\mathcal P_g(\kappa, \mathfrak D)}\right)_r \arrow[d, "KS_{\mathfrak D}"] \arrow[r, "\left(d\mathfrak c_g(\kappa\comma\mathfrak D)\right)_r"] &[3em] H^1\left(X, K_X^{\otimes 2}\left(-D-D'\right)\right) \arrow[d, equal]& \\ 0  \arrow [r] & H^1\left(X, \Lambda_{p}\right) \arrow[r] & H^1\left(X, \widetilde \Lambda_r\right) \arrow[r]& H^1\left(X, K_X^{\otimes 2}\left(-D-D'\right)\right) \arrow[r]& H^2\left(X, \Lambda_p\right)
\end{tikzcd}
\end{equation}

Since $KS_{\mathfrak D}$ is an isomorphism, one has :

\begin{lemma}
The map $KS^{\BPS}$ in \eqref{eq_kodaira_spencer_3bis} is an isomorphism.
Moreover the differential $d\mathfrak c_g(\kappa, \mathfrak D)$ is surjective at any point $p$ of $\mathcal P_g(\kappa)$ such that $H^2\left(X, \Lambda_p\right)=0$.
\end{lemma}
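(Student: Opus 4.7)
The plan is to deduce both conclusions from a diagram chase in the commutative diagram immediately preceding the statement, exploiting the fact that $KS_{\mathfrak D}$ is already known to be an isomorphism and that the right vertical arrow is the identity.

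First I would expand the bottom row into a longer exact sequence of five terms. Comparing the definitions of $\Lambda_p$ and $\widetilde\Lambda_r$ as the kernels of $\xi_p$ and $\widetilde\xi_r$, together with the tautological restriction short exact sequence $0 \to K_X^{\otimes 2}(-D-D') \to K_X^{\otimes 2}(-D) \to K_X^{\otimes 2}(-D)|_{D'} \to 0$, the snake lemma yields a short exact sequence of coherent sheaves
\begin{equation*}
0 \to \Lambda_p \to \widetilde \Lambda_r \to K_X^{\otimes 2}(-D-D') \to 0.
\end{equation*}
Taking cohomology, using $H^0\bigl(X, K_X^{\otimes 2}(-D-D')\bigr) = 0$ (which follows from $n+\mathfrak n > 4g-4$) and the vanishing of $H^2$ of any coherent sheaf on the compact Riemann surface $X$, one obtains the four-term exact sequence
\begin{equation*}
0 \to H^1(X, \Lambda_p) \to H^1(X, \widetilde\Lambda_r) \to H^1\bigl(X, K_X^{\otimes 2}(-D-D')\bigr) \to H^2(X, \Lambda_p) \to 0.
\end{equation*}
Simultaneously, since $\mathcal P_g(\kappa)$ sits inside $\mathcal P_g(\kappa, \mathfrak D)$ as the vanishing locus of the section $\mathfrak c_g(\kappa, \mathfrak D)$, the Zariski tangent space $\bigl(T_{\mathcal P_g(\kappa)}\bigr)_{(X, D, a, p, m)}$ is precisely the kernel of $\bigl(d\mathfrak c_g(\kappa, \mathfrak D)\bigr)_r$, making the top row of the diagram exact at its middle entry.

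Granted these two exact rows, the fact that $KS^{\BPS}$ is an isomorphism is a direct five lemma argument. Injectivity follows at once from injectivity of $KS_{\mathfrak D}$ and of the top-left horizontal arrow. For surjectivity, given $w \in H^1(X, \Lambda_p)$, I would let $v' \in \bigl(T_{\mathcal P_g(\kappa, \mathfrak D)}\bigr)_r$ be the unique element with $KS_{\mathfrak D}(v')$ equal to the image of $w$ in $H^1(X, \widetilde\Lambda_r)$; commutativity of the right square forces $\bigl(d\mathfrak c_g(\kappa, \mathfrak D)\bigr)_r(v') = 0$, so $v'$ lifts to a (unique) $v \in \bigl(T_{\mathcal P_g(\kappa)}\bigr)_{(X, D, a, p, m)}$, and injectivity of the bottom-left arrow yields $KS^{\BPS}(v) = w$.

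For the second claim, when $H^2(X, \Lambda_p) = 0$ the displayed long exact sequence shows that the arrow $H^1(X, \widetilde\Lambda_r) \to H^1\bigl(X, K_X^{\otimes 2}(-D-D')\bigr)$ is surjective. Composing with the inverse of $KS_{\mathfrak D}$ and using commutativity of the right square transfers this at once to surjectivity of $\bigl(d\mathfrak c_g(\kappa, \mathfrak D)\bigr)_r$, as required. I do not expect any genuine obstacle: the argument is formal once the short exact sequence of sheaves is in place, and the only technical point worth care is verifying that sequence, which amounts to unwinding the definitions of $\Lambda_p$ and $\widetilde\Lambda_r$ as compatible kernels of $\xi_p$ and $\widetilde\xi_r$.
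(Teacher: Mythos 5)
Your proposal is correct and follows essentially the same route as the paper, which obtains the lemma directly from the displayed commutative diagram once $KS_{\mathfrak D}$ is known to be an isomorphism; you merely make explicit the sheaf sequence $0 \to \Lambda_p \to \widetilde\Lambda_r \to K_X^{\otimes 2}(-D-D') \to 0$ underlying the exactness of the bottom row and the diagram chase the paper leaves implicit. One small caveat: $\Lambda_p$ and $\widetilde\Lambda_r$ are kernels of differential operators, hence sheaves of $\C$-vector spaces rather than coherent $\mathcal O_X$-modules, so the final surjection onto $H^2\left(X, \Lambda_p\right)$ cannot be justified by coherence (it follows instead from $H^2\left(X, \widetilde\Lambda_r\right) \simeq H^2\left(X, \Gamma_a\right) \simeq H^2\left(X, T_X\left(-D^{\red}\right)\right) = 0$), but that term plays no role in the argument.
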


Take $p \in \mathcal P_g(\kappa)$ such that $H^2\left(X, \Lambda_p\right)=0$.
Since the restricted differential $d\mathfrak c_g(\kappa, \mathfrak D)$ is surjective at $p$, the total differential $d\mathfrak c_g(n, \mathfrak D)$  is \emph{a fortiori} surjective at $p$.
This implies that $p$ is a smooth point of $\mathcal P_g(n) \subset \mathcal P_g(n, \mathfrak D)$.
Reciprocally, if $H^2\left(X, \Lambda_p\right) \neq 0$ and $D$ is reduced, then $h^1\left(X, \Lambda_p\right) = 6g-6+n+h^2\left(X, \Lambda_p\right) > 6g-6+n$, so $p$ is singular.

Finally, let $p \in \mathcal P_g(n)$ be any branched projective structure such that $h^2\left(X, \Lambda_p\right) > 0$.
It is shown in \cite{calsamiglia-deroin-francaviglia} that $p$ is the limit in $\mathcal P_g(n)$ of a sequence $\left(p_n\right)_{n \in \N}$ of branched projective structures with the same holonomy as $p$ and reduced branching divisor.
By subsection \ref{subsection_infinitesimal_automorphisms}, the dimension of $H^2\left(X, \Lambda_p\right)$ depends only on the holonomy of $\rho$, thus the $p_n$ are all singular points of $\mathcal P_g(n)$, and since the singular locus is closed, $p$ is itself a singular point.

\subsection{The Holonomy Map}

Let $S$ be a surface of genus $g$ (with $g \ge 2$).
Denote by $\Hom^{\star}\left(\pi_1(S), \PSL(2, \C)\right)$ the space of group morphisms $\rho : \pi_1(S) \to \PSL(2, \C)$ such that $\Image \rho \subset \PSL(2, \C)$ is not abelian and is not conjugated to a subgroup of the group $\left\{z \mapsto az| a \in \C^*\right\} \cup \left\{z \mapsto \frac az | a \in \C^*\right\}$ \footnote{This second condition is necessary for the action by conjugation of $\PSL(2, \C)$ to be free}.
In particular, $\Hom^{\star}\left(\pi_1(S), \PSL(2, \C)\right)$ contains all non-elementary representations.
The space $\Hom^{\star}\left(\pi_1(S), \PSL(2, \C)\right)$ is an open subspace of $\Hom\left(\pi_1(S), \PSL(2, \C)\right)$.
The space $\Hom^{\star}\left(\pi_1(S), \PSL(2, \C)\right)$ is a smooth analytic space (see \cite{hubbard}) and the group $\PSL(2, \C)$ acts freely and properly by conjugation on $\Hom^{\star}\left(\pi_1(S), \PSL(2, \C)\right)$.
As a consequence, the quotient $\Hom^{\star}\left(\pi_1(S), \PSL(2, \C)\right) / \PSL(2, \C)$ is a smooth analytic space.

Denote by $\mathcal P_g(n)^{\star}$ the subset of $\mathcal P_g(n)$ of branched projective structures whose holonomy lies in $\Hom^{\star}\left(\pi_1(S), \PSL(2, \C)\right)$.
There is a map $\Hol_g(n)^{\star} : \mathcal P_g(n)^{\star} \to \Hom^{\star}\left(\pi_1(S), \PSL(2, \C)\right)/\PSL(2, \C)$ that to a branched projective structure associates the conjugacy class of its holonomy representations.
The space $\mathcal P_g(n)^{\star}$ is clearly an open subset of $\mathcal P_g(n)$, and it is smooth by theorem \ref{theorem_singular_points}.
Since $\mathcal P_g(n)^{\star}$ is the basis of a family of branched projective structures, the map $\Hol_g(n)^{\star}$ is holomorphic.

\begin{theorem}\label{theorem_holonomy}
The holonomy map $\Hol_g(n)^{\star} : \mathcal P_g(n)^{\star} \to \Hom^{\star}\left(\pi_1(S), \PSL(2, \C)\right)/PSL(2, \C)$ is a holomorphic submersion.
\end{theorem}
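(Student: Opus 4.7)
The plan is to apply the Kodaira--Spencer framework already developed in subsection \ref{subsection_tangent_spaces_univ_spaces_bps} to identify the differential of $\Hol_g(n)^{\star}$ at a structure $p \in \mathcal P_g(n)^{\star}$ with a natural map between first cohomology groups, and then to read off surjectivity from a sheaf-theoretic argument that was essentially already set up in subsection \ref{subsection_infinitesimal_automorphisms}. The two starting points are the isomorphism $KS^{\BPS} : \left(T_{\mathcal P_g(n)}\right)_p \xrightarrow{\sim} H^1(X, \Lambda_p)$ from subsection \ref{subsection_tangent_spaces_univ_spaces_bps} (valid at every smooth point, in particular on all of $\mathcal P_g(n)^{\star}$ by theorem \ref{theorem_singular_points}), together with the standard identification $\bigl(T_{[\rho]}(\Hom^{\star}(\pi_1(S), \PSL(2, \C))/\PSL(2,\C))\bigr) \cong H^1(X, \Ad_p)$ of the tangent space of the character variety at $[\rho]$ with the first cohomology of the local system $\Ad_p$ of projective vector fields (see e.g.\ \cite{hubbard}). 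The freeness and properness of the $\PSL(2,\C)$-action on $\Hom^{\star}$ guarantees that this quotient is a smooth analytic space, so that this identification is genuine.

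The central claim is that, under these two identifications, $d\Hol_g(n)^{\star}_p$ coincides with the map $H^1(X, \Lambda_p) \to H^1(X, \Ad_p)$ induced by the inclusion of sheaves $\Lambda_p \hookrightarrow \Ad_p$. I would verify this with a direct Čech computation: a tangent vector at $p$ is represented by a $1$-cocycle $(V_{ij})$ with values in $\Lambda_p$, built from local vector fields on an atlas $(U_i, f_i)$ of $p$ that preserve the relative branched projective structure, and the induced infinitesimal deformation of the holonomy is obtained by differentiating the Möbius transition maps $g_{ij}$ along $(V_{ij})$. The resulting cocycle with values in $\Ad_p$ is exactly $(V_{ij})$ viewed in the larger sheaf, i.e.\ the image under the inclusion.

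Granted this identification, the submersion property follows from the long exact sequence \eqref{eq_exact_autom_local_syst} associated to $0 \to \Lambda_p \to \Ad_p \to \Ad_p/\Lambda_p \to 0$: since $\Ad_p/\Lambda_p$ is a skyscraper sheaf supported on the finitely many branched points of $p$, its $H^1$ vanishes, and the arrow $H^1(X, \Lambda_p) \to H^1(X, \Ad_p)$ is surjective. Applied at every $p \in \mathcal P_g(n)^{\star}$, this yields surjectivity of $d\Hol_g(n)^{\star}$ everywhere, which is the desired conclusion.

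The main obstacle is the Čech-level verification that the differential of the holonomy is realized by the sheaf inclusion; the other ingredients amount to bookkeeping once the Kodaira--Spencer isomorphisms are in place. A secondary but routine point is to confirm that the two conditions defining $\Hom^{\star}$ are exactly what is needed for the character variety identification to hold without dimensional defect, i.e.\ that $H^0(X, \Ad_p) = 0$ in those cases, which follows directly from the case analysis (i)--(iii) of subsection \ref{subsection_infinitesimal_automorphisms}: those hypotheses precisely exclude the representations with a nontrivial space $L_1$ of adjoint-invariant projective vector fields.
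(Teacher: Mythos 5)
Your proposal follows essentially the same route as the paper: identify $d_p\Hol_g(n)^{\star}$ with the map $\phi\colon H^1(X,\Lambda_p)\to H^1(X,\Ad_p)$ induced by the sheaf inclusion $\Lambda_p\subset\Ad_p$, via the Kodaira--Spencer isomorphism, and deduce surjectivity from the long exact sequence \eqref{eq_exact_autom_local_syst}, where the quotient $\Ad_p/\Lambda_p$ is a skyscraper sheaf. The only presentational difference is that the paper obtains the identification $d_p\Hol_g(n)^{\star}=\phi\circ KS^{\BPS}$ by adapting Hubbard's argument for the unbranched case $n=0$ rather than by a fresh \v{C}ech computation; your proposed verification would amount to redoing that argument.

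One inaccuracy needs correcting: the Kodaira--Spencer isomorphism $KS^{\BPS}$ of subsection \ref{subsection_tangent_spaces_univ_spaces_bps} is defined on $\left(T_{\mathcal P_g(\kappa)}\right)_p$, the tangent space to the stratum of structures whose branching divisor realizes the partition $\kappa$, and not on all of $\left(T_{\mathcal P_g(n)}\right)_p$ as you assert. When the branching divisor is non-reduced these genuinely differ: $\dim T_p\mathcal P_g(n)=6g-6+n$ at a smooth point, while $h^1(X,\Lambda_p)=6g-6+r$ with $r<n$ the number of distinct branch points, so an isomorphism onto $H^1(X,\Lambda_p)$ from the full tangent space is impossible. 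This does not sink the argument, since surjectivity of $d_p\Hol_g(n)^{\star}$ restricted to $T_p\mathcal P_g(\kappa)$ already implies surjectivity of the full differential --- which is exactly how the paper phrases the conclusion --- but your statement of the key isomorphism must be restricted to the stratum.
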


Let $\mathcal X_g$ be the $\PSL(2, \C)$-character variety associated to $S$, i.e. the GIT quotient\break $\Hom\left(\pi_1(S), \PSL(2, \C)\right) // \PSL(2, \C)$, that is an analytic space.
There is an analytic morphism $\Hol_g(n) : \mathcal P_g(n) \to \mathcal X_g$, that to a branched projective structure associates its holonomy.
There is an obvious map $\iota : \Hom^{\star}\left(\pi_1(S), \PSL(2, \C)\right)/\PSL(2, \C) \to \mathcal X_g$, that verifies in particular $\Hol_g(n) = \iota \circ \Hol_g(n)^*$.
In restriction to the preimage of the smooth locus of $\mathcal X_g$, the map $\iota$ is a biholomorphism.
As a consequence, theorem \ref{theorem_holonomy} implies :

\begin{corollary}
In restriction to the preimage of the smooth locus of the character variety $\mathcal X_g$, the holonomy map $\Hol_g(n) : \mathcal P_g(n) \to \mathcal X_g$ is a holomorphic submersion.
\end{corollary}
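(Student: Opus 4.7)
The plan is to identify the differential of $\Hol_g(n)^\star$ at a point $p \in \mathcal P_g(n)^\star$ with the natural map $H^1(X, \Lambda_p) \to H^1(X, \Ad_p)$ induced by the sheaf inclusion $\Lambda_p \hookrightarrow \Ad_p$, and then conclude surjectivity directly from the exact sequence \eqref{eq_exact_autom_local_syst}.

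First I would handle the tangent space of the target. Since $p \in \mathcal P_g(n)^\star$ has non-abelian holonomy, we lie in case (iii) of subsection \ref{subsection_infinitesimal_automorphisms}, so $H^0(X, \Ad_p) = H^2(X, \Ad_p) = 0$. Combined with the fact that $\PSL(2,\C)$ acts freely on $\Hom^\star$, this yields that $[\rho]$ is a smooth point of the quotient, with tangent space canonically the group cohomology $H^1(\pi_1(S), \mathfrak{sl}(2,\C)_{\Ad\rho})$. Since $X$ is a $K(\pi_1, 1)$, this group cohomology coincides with the sheaf cohomology $H^1(X, \Ad_p)$ of the local system.

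For the source, let $\kappa$ be the partition of $n$ given by the branching multiplicities of $p$. The Kodaira-Spencer isomorphism $KS^{\BPS}$ from subsubsection \ref{subsubsection_singular_points_general_case} gives $T_p \mathcal P_g(\kappa) \cong H^1(X, \Lambda_p)$, and this stratum tangent space sits as a linear subspace of $T_p \mathcal P_g(n)^\star$. The key and most delicate step is to verify that, under these identifications, the restriction of $d_p \Hol_g(n)^\star$ to $T_p \mathcal P_g(\kappa)$ is precisely the map induced on $H^1$ by $\Lambda_p \hookrightarrow \Ad_p$. Concretely, I would pick an open cover $(U_i)$ of $X$ with branched projective charts $f_i$ of $p$ and Möbius transitions $f_i = g_{ij} \circ f_j$; an infinitesimal stratum-preserving deformation of $p$ is encoded by a Čech $1$-cocycle $(V_{ij}) \in Z^1((U_i), \Lambda_p)$ obtained as differences $V_{ij} = V_j - V_i$ of local vector fields $V_i$ that perturb the charts $f_i$, and the pushforward $(f_j)_* V_{ij}$ then realizes the infinitesimal variation of the Möbius cocycle $g_{ij}$ as an $\mathfrak{sl}(2,\C)$-valued Čech cocycle representing the image of $[(V_{ij})]$ in $H^1(X, \Ad_p)$.

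Once step three is secured, the exact sequence \eqref{eq_exact_autom_local_syst} immediately yields that $H^1(X, \Lambda_p) \to H^1(X, \Ad_p)$ is surjective, since the next term $H^1(X, \Ad_p/\Lambda_p)$ vanishes ($\Ad_p/\Lambda_p$ being a skyscraper sheaf supported on the branch points). Hence $d_p \Hol_g(n)^\star$ is already surjective in restriction to the stratum tangent space, and a fortiori on the full $T_p \mathcal P_g(n)^\star$. As this holds at every $p \in \mathcal P_g(n)^\star$, and the map is already known to be holomorphic between smooth analytic spaces, this proves that $\Hol_g(n)^\star$ is a holomorphic submersion. The main obstacle is the Čech-theoretic computation in step three, which is classical for unbranched projective structures (see e.g.\ Gunning or Hubbard) and adapts essentially unchanged to the branched setting provided one restricts deformations to vector fields in $\Lambda_p$.
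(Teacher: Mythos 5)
Your argument is essentially the paper's own proof of Theorem \ref{theorem_holonomy}: identify the target's tangent space at $[\rho]$ with $H^1(X,\Ad_p)$, identify $T_p\mathcal P_g(\kappa)$ with $H^1(X,\Lambda_p)$ via $KS^{\BPS}$, show the restricted differential is the map induced by the inclusion $\Lambda_p\hookrightarrow\Ad_p$ (the paper likewise delegates this Čech computation to Hubbard's unbranched argument), and conclude surjectivity from the exact sequence \eqref{eq_exact_autom_local_syst} because $\Ad_p/\Lambda_p$ is a skyscraper sheaf. That part is correct and matches the paper.

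However, what you have proved is the theorem about $\Hol_g(n)^{\star}$, not the stated corollary, which concerns $\Hol_g(n):\mathcal P_g(n)\to\mathcal X_g$ with $\mathcal X_g$ the GIT character variety. The missing (short but genuine) step is the comparison of the two targets: one must observe that the preimage of the smooth locus of $\mathcal X_g$ is contained in $\mathcal P_g(n)^{\star}$, that $\Hol_g(n)=\iota\circ\Hol_g(n)^{\star}$ there for the natural map $\iota:\Hom^{\star}\left(\pi_1(S),\PSL(2,\C)\right)/\PSL(2,\C)\to\mathcal X_g$, and that $\iota$ restricts to a biholomorphism over the smooth locus, so that the submersion property transfers. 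You should add this reduction explicitly; without it the statement as given is not established.
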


\begin{proof}(of theorem \ref{theorem_holonomy})

Take $p \in \mathcal P_g(n)^*$, with underlying Riemann surface $X$.
Let $\rho \in \Hom^{\star}\left(\pi_1(S), \PSL(2, \C)\right)$ be a holonomy representation of $p$, and write $[\rho] = \Hol_g(n)^{\star}(p)$.
It is proved in \cite{hubbard} that the tangent space of $\Hom^{\star}\left(\pi_1(S), \PSL(2, \C)\right)/\PSL(2, \C)$ at $[\rho]$ is $H^1(X, \Ad_p)$, where $\Ad_p$ is defined in  subsection \ref{subsection_infinitesimal_automorphisms}.

Hubbard also proved in \cite{hubbard} that in the case $n = 0$, the differential at $p$ of the holonomy map, i.e. $d_p\Hol_g(0)^{\star} : T_p \mathcal P_g(0) \to H^1(X, \Ad_p)$, is given by the Kodaira-Spencer at $p$ of the universal family $\mathcal U_g^{bps}(0)$.
The situation is almost the same in the branched case $(n > 1)$, with two differences. Firstly the Kodaira-Spencer map $KS^{bps}$ (see subsection \ref{subsection_tangent_spaces_univ_spaces_bps}) is only defined on $T_p\mathcal P_g(\kappa)$, where $\kappa$ is the partition of $n$ associated to the branching divisor of $p$. Secondly $KS^{bps}$ takes values in the first cohomology group of the subsheaf $\Lambda_p$ of $\Ad_p$ (see subsection \ref{subsection_infinitesimal_automorphisms}). However there is a canonical map $\phi : H^1(X, \Lambda_p) \to H^1(X, \Ad_p)$, coming from the inclusion $\Lambda_p \subset \Ad_p$.

Thus the proof of Hubbard gives in the branched case :
\begin{equation}
d_p\Hol_g(n)^{\star}|_{T_p\mathcal P_g(\kappa)} = \phi \circ KS^{bps}
\end{equation}
Since $\phi$ is surjective according to \eqref{eq_exact_autom_local_syst} and $KS^{bps}$ is an isomorphism, $d_p\Hol_g(n)^{\star}|_{T_p\mathcal P_g(\kappa)}$ is surjective, thus $d_p\Hol_g(n)^{\star}$ is \emph{a fortiori} surjective, which proves that $\Hol_g(n)^{\star}$ is a submersion.
\end{proof}

\printbibliography

\end{document}